\documentclass[a4paper,dvipsnames]{amsart}

\usepackage[english]{babel}
\usepackage[T1]{fontenc}

\usepackage{etoolbox}
\apptocmd{\sloppy}{\hbadness 10000\relax}{}{}
\apptocmd{\sloppy}{\vbadness 10000\relax}{}{}

\usepackage{mathtools}

\usepackage[a4paper,top=3cm,bottom=2cm,left=3cm,right=3cm,marginparwidth=1.75cm]{geometry}

\usepackage{amsmath}
\usepackage{graphicx}
\usepackage[dvipsnames]{xcolor}
\usepackage[pagebackref,hypertexnames=false, colorlinks, citecolor=BrickRed,linkcolor=MidnightBlue, urlcolor=BrickRed]{hyperref}

\usepackage[capitalize]{cleveref}

\usepackage{diagbox,multirow} 
\usepackage{relsize}
\usepackage{comment}  

\usepackage{lmodern}
\usepackage[english]{babel}
\usepackage{amsmath}
\usepackage{amsfonts}
\usepackage{colonequals} 
\usepackage{amssymb}
\usepackage{amsthm}
\usepackage{stmaryrd}
\usepackage[all]{xy}

\usepackage{enumitem}
\setlist[enumerate]{topsep=1mm,parsep=0pt,partopsep=0mm,itemsep=1mm,leftmargin=12mm,labelsep=1mm} 

\usepackage{hyphenat}

\usepackage{faktor}


\colorlet{Malte}{OliveGreen}
\colorlet{Uwe}{red}
\colorlet{Ania}{blue}

\newcommand{\Pol}[1]{\operatorname{Pol}(#1)}

\newcommand{\A}{\mathcal A} 
\newcommand{\B}{\mathcal B} 
\newcommand{\C}{\mathbb C} 
\newcommand{\e}{\varepsilon} 
\newcommand{\free}{\ast}
\newcommand{\tensor}{\otimes}
\newcommand{\gluedfree}{\mathbin{\tilde{\free}}}
\newcommand{\gluedtensor}{\mathbin{\tilde{\tensor}}}
\newcommand{\N}{\mathbb N} 
\newcommand{\R}{\mathbb R} 
\newcommand{\tr}{\operatorname{tr}}
\newcommand{\im}{\operatorname{Im}}

\newcommand{\Z}{\mathbb Z} 

\newcommand{\Mu}{\mathrm{M}}
\newcommand{\Kappa}{\mathrm{K}}

\newcommand{\YDaction}{\mathbin{\vartriangleleft}}

\newcommand{\orcidlogo}{{\includegraphics[width=\fontcharht\font`W]{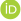}}}



\newtheorem{mainresult}{Theorem}

\Crefname{mainresult}{Theorem}{Theorems}
\newtheorem{theorem}{Theorem}[section]
\newtheorem{lemma}[theorem]{Lemma}
\newtheorem{corollary}[theorem]{Corollary}
\newtheorem{proposition}[theorem]{Proposition}

\theoremstyle{definition} 
\newtheorem{definition}[theorem]{Definition}
\newtheorem{remark}[theorem]{Remark}
\newtheorem{example}[theorem]{Example}
\newtheorem{observation}[theorem]{Observation}


\title[Free resolutions for free unitary quantum groups]{Free resolutions for free unitary quantum groups and universal cosovereign Hopf algebras}

\thanks{This work grew out of the research group meeting ``Cohomological properties of easy quantum groups'', conducted
02.11.2021 -- 08.11.2021 in B\k{e}dlewo, which was supported by the Stefan Banach International Mathematical Center, the National Science Centre and the Polish National Agency for Academic Exchange.}

\subjclass[2020]{
16E40, 
16T05, 
16T20, 
46L67. 
}

\author{Isabelle Baraquin}
\address[I.B.]{Universit\'e de Franche-Comt\'e, CNRS, UMR 6623, LmB, F-25000 Besan\c{c}on, France, \href{https://orcid.org/0000-0002-4365-8270}{\orcidlogo\,0000-0002-4365-8270}
}
\email{isabelle.baraquin@univ-fcomte.fr}

\author{Uwe Franz}
\address[U.F.]{Universit\'e de Franche-Comt\'e, CNRS, UMR 6623, LmB, F-25000 Besan\c{c}on, France,
\href{https://orcid.org/0000-0001-8586-8025}{\orcidlogo\,0000-0001-8586-8025}}
\email{uwe.franz@univ-fcomte.fr}
\thanks{U.F.\ was supported by the ANR Project No.\ ANR-19-CE40-0002.}

\author{Malte Gerhold}
\address[M.G.]{Saarland University, Fachbereich Mathematik\\\href{https://orcid.org/0000-0003-4029-1108}{ 
   \orcidlogo\,0000-0003-4029-1108}}
   \email{gerhold@math.uni-sb.de}
\thanks{The work of MG was supported by the German Research Foundation (DFG) grant nos 465189426 and 397960675; it was carried out as a postdoctoral researcher at Saarland University, during the tenure of an ERCIM “Alain Bensoussan” Fellowship Programme at NTNU Trondheim, as a guest researcher at Saarland University in the scope of the SFB‐TRR 195, and as a postdoctoral scientific employee at University of Greifswald.}

\author{Anna Kula}
\address[A.K.]{Instytut Matematyczny, Uniwersytet Wrocławski, pl. Grunwaldzki 2, 50-348 Wrocław, Poland, \href{https://orcid.org/0000-0001-9688-6915}{\orcidlogo\,0000-0001-9688-6915}}
\thanks{A.K.\ was supported by the Polish National Science Center grant SONATA 2016/21/D/ST1/03010 and by the Polish National Agency for Academic Exchange in frame of POLONIUM program PPN/BIL/2018/1/00197/U/00021. }
\email{anna.kula@math.uni.wroc.pl}

\author{Mariusz Tobolski}
\address[M.T.]{Instytut Matematyczny, Uniwersytet Wrocławski, pl. Grunwaldzki 2, 50-348 Wrocław, Poland, \href{https://orcid.org/0000-0001-7738-8515}{\orcidlogo\,0000-0001-7738-8515}}
\thanks{M.T. was supported by the Polish National Science Center grant SONATINA 2020/36/C/ST1/00082.}
\email{mariusz.tobolski@math.uni.wroc.pl}

\date{}

\begin{document}
\begin{abstract}
    We find a finite free resolution of the counit of the free unitary quantum groups of van Daele and Wang and, more generally, Bichon's universal cosovereign Hopf algebras with a generic parameter matrix. This allows us to compute Hochschild cohomology with 1-dimensional coefficients for all these Hopf algebras. In fact, the resolutions can be endowed with a Yetter-Drinfeld structure. General results of Bichon then allow us to compute also the corresponding bialgebra cohomologies. Finding the resolution rests on two pillars. We take as a starting point the resolution for the free orthogonal quantum group presented by Collins, H{\"a}rtel, and Thom or its algebraic generalization to quantum symmetry groups of bilinear forms due to Bichon. Then we make use of the fact that the free unitary quantum groups and some of its non-Kac versions can be realized as a glued free product of a (non-Kac) free orthogonal quantum group with $\Z_2$, the finite group of order 2. To obtain the resolution also for more general universal cosovereign Hopf algebras, we extend Gromada's proof from compact quantum groups to the framework of matrix Hopf algebras. As a byproduct of this approach, we also obtain a projective resolution for the freely modified bistochastic quantum groups. Only a special subclass of free unitary quantum groups and universal cosovereign Hopf algebras decompose as a glued free product in the described way. In order to verify that the sequence we found is a free resolution in general (as long as the parameter matrix is generic, two conditions which are automatically fulfilled in the free unitary quantum group case), we use the theory of Hopf bi-Galois objects and Bichon's results on monoidal equivalences between the categories of Yetter-Drinfeld modules over universal cosovereign Hopf algebras for different parameter matrices.
\end{abstract}
\maketitle
\vspace*{-2.1em}
\tableofcontents

\section{Introduction}

Interest in homological properties of Hopf algebras in general and Hopf algebras of compact quantum groups in particular stems from several directions. On the one hand, geometric concepts such as dimension can be transferred to these rather abstract algebraic objects. On the other hand, the first and the second Hochschild cohomology with trivial coefficients, $\mathrm{H}^1(A,\C_\e),\mathrm{H}^2(A,\C_\e)$, play an important role in the classification of Lévy processes on a Hopf algebra $A$ in the sense of Schürmann \cite{schurmann}.\footnote{Indeed, L\'evy processes on a compact quantum group $\mathbb{G}$ are classified by their generating functionals $\phi\colon \mathrm{Pol}(\mathbb{G})\to\mathbb{C}$, which can always be completed to so-called Sch{\"u}rmann triples $(\phi,\eta,\phi)$, where $\pi\colon \mathrm{Pol}(\mathbb{G})\to B(H)$ is a $*$-homomorphism with values in the $*$-algebra of bounded linear operators on some Hilbert space $H$, $\eta\colon \mathrm{Pol}(\mathbb{G})\to H$ is a Hochschild one-cocycle (w.r.t.\ a certain $\mathrm{Pol}(\mathbb{G})$-bimodule structure on $H={_\pi H_\e}$), and the bilinear map $\mathrm{Pol}(\mathbb{G})\otimes\mathrm{Pol}(\mathbb{G})\ni a\otimes b \mapsto - \langle\eta(a^*),\eta(b)\rangle\in\mathbb{C}$ is the Hochschild coboundary of $\phi$. See \cite{schurmann,fgt15,hunt} for a detailed description.} 
If one is only interested in the first and the second Hochschild cohomology with specific coefficient bimodules, a viable approach is to calculate cocycles and coboundaries explicitly. This was successfully done in several cases, for example, in \cite{BFG17}, where vanishing of $\mathrm H^1(A,\C_\e),\mathrm H^2(A,\C_\e)$ for \emph{quantum permutation algebras} is proved, or in \cite{Mang23pre}, where Mang calculated $\mathrm H^1(A,\C_\e)$ for the Hopf algebras of arbitrary \emph{easy unitary quantum groups}. In \cite{VanDaeleWang1996}, van Daele and Wang constructed two families of \emph{universal} compact quantum groups: the \emph{free unitary quantum groups} $U_K^+$ and \emph{free orthogonal quantum groups} $O_L^+$, where $K,L\in\operatorname{GL}_n(\C)$ are complex invertible matrices and $\overline L L\in \C I_n$ is a multiple of the $n\times n$ identity matrix $I_n$.\footnote{Our definitions below follow conventions of Banica in \cite{Banica1996,Banica1997-free_unitary_quantum_group} and differ slightly from the definitions in \cite{VanDaeleWang1996}. The free unitary quantum groups are universal in the sense that every compact matrix quantum group is a quantum subgroup of some $U_K^+$.} We denote the associated Hopf algebras $\Pol{U_K^+}$ and $\Pol{O_L^+}$, respectively; for $n\in\N$, we refer to $U_n^+:=U_{I_n}^+$ and $O_n^+:=O_{I_n}^+$ as the free unitary and free orthogonal quantum group of \emph{Kac type}. In \cite{DFKS18,DFKS23}, the first and the second Hochschild cohomologies for the Hopf algebras $\Pol{U_K^+}$ and $\Pol{O_L^+}$ are determined with concrete formulas for cocycles and coboundaries for most parameter matrices $K,L$, however, the case of $U_K^+$ where $K^*K$ has three distinct Eigenvalues in ``geometric progression'' ($q^{-1},1,q$, with $q\in\R^+$) remained open.

By a general procedure, described in detail by Bichon \cite[Section 2.2]{Bic13}, all Hochschild cohomologies $\mathrm{H}^k(A,M)$ for arbitrary coefficient bimodules $M$ can be deduced from a resolution of the counit of a Hopf algebra $A$. In \cite{CHT09}, Collins, Härtel, and Thom present a finite resolution of the counit of the Hopf algebra $\Pol{O_n^+}$ associated with the Kac type free orthogonal quantum group $O_n^+$, allowing them to deduce information about Hochschild cohomology and $\ell^2$-Betty numbers. Bichon \cite{Bic13} generalized this to a class of Hopf algebras denoted by $\mathcal B(E)$, $E\in\operatorname{GL}_n(\C)$, (see \Cref{sec:B(E)_and_H(F)}) which includes the Hopf algebras of the possibly non-Kac free orthogonal quantum groups $O_L^+$, $\overline L L\in \C I_n$, defined by Banica \cite{Banica1996}. In this work, we use Bichon's free resolution for $\mathcal B(E)$ as a starting point to find projective or free resolutions for a number of related Hopf algebras; most notably, we find a finite free resolution for the \emph{free unitary quantum groups} $U_K^+$ and, more generally, its algebraic counterpart, the \emph{universal cosovereign Hopf algebras} $\mathcal H(F)$ introduced by Bichon in \cite{Bichon01} for all matrices $F\in\operatorname{GL}_n(\C)$ which are \emph{generic} (a certain condition equivalent to the Hopf algebras $\mathcal H(F)$ and $\mathcal B(E)$ for $F=E^tE^{-1}$ being cosemisimple). For the universal cosovereign Hopf algebras, Bichon was able to compute partial information on the Hochschild and Gerstenhaber-Schack cohomologies in \cite{Bichon18}, for example their corresponding cohomological dimensions, but even Hochschild cohomology for trivial coefficients was not completely known. With knowledge of the resolution, we can not only improve the computations in \cite{DFKS18,DFKS23} of the first and second Hochschild cohomologies for the Hopf algebras $\Pol{O_L^+}$ and 
$\Pol{U_K^+}$, but also gather a lot information about higher cohomologies of universal quantum groups and universal cosovereign Hopf algebras. 

Let us now summarize the main results of this paper.

\begin{mainresult}\label{main-result:resolution-for-H(F)}
  Let $F\in\operatorname{GL}_n(\C)$ be generic. Then the counit of $\mathcal H(F)$ has a finite free resolution of length 3. In particular, we have a finite free resolution of the counit of $\Pol{U_K^+}$ of length 3 for arbitrary $K\in \operatorname{GL}_n(\C)$.
\end{mainresult}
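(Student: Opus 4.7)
The plan is to build the resolution in stages, taking Bichon's finite free resolution of the counit of $\mathcal B(E)$ as a starting point and pushing it up to every generic $\mathcal H(F)$. For a suitable subclass of parameter matrices $F$, one has a structural identification $\mathcal H(F)\cong \mathcal B(E)\gluedfree \C[\Z_2]$ for some $E$ depending on $F$. In the compact quantum group setting, Gromada has shown how to glue the counital resolutions of the two factors of such a glued free product into a single finite free resolution of the counit of the product. The first step of the proof is therefore to extend Gromada's gluing construction from the compact quantum group framework to general matrix Hopf algebras and apply it here; feeding Bichon's length-$3$ resolution of $\mathcal B(E)$ into this machine should produce a finite free resolution of $\mathcal H(F)$ of length $3$ for every $F$ in the glued-free-product subclass.

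To upgrade this to arbitrary generic $F$, I would invoke Bichon's theorem that the categories of Yetter-Drinfeld modules over $\mathcal H(F)$ and $\mathcal H(F')$ are monoidally equivalent whenever $F,F'$ are both generic, with the equivalence implemented by a Hopf bi-Galois object between the two Hopf algebras. Cotwisting along such a bi-Galois object transports a projective resolution of one counit to a projective resolution of the other of the same length; combined with a genericity-based freeness check, this transports the length-$3$ free resolution from the first step to a length-$3$ free resolution of the counit of $\mathcal H(F)$ for every generic $F$. The ``in particular'' assertion for $\Pol{U_K^+}$ then follows from its identification $\Pol{U_K^+}\cong \mathcal H(F)$ for a parameter matrix $F$ built from $K$, together with the fact that this $F$ is automatically generic for every $K\in\operatorname{GL}_n(\C)$.

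The chief obstacles I anticipate are twofold. First, extending Gromada's glued-free-product resolution to the algebraic matrix-Hopf-algebra setting requires replacing his analytic input (the Haar state, operator-algebraic completions, and $C^*$-tensor category machinery) by purely algebraic tools, which forces one to rely on a careful cosemisimplicity analysis of the comodule categories of $\mathcal B(E)$, $\C[\Z_2]$ and their glued free product that is available precisely when the parameter matrix is generic. Second, transport through a Hopf bi-Galois cotwist preserves projectivity on general grounds, but to conclude that the transported resolution remains \emph{free} (rather than merely projective) one has to analyze the Yetter-Drinfeld comodules appearing in the explicit resolution and their images under the cotwist; this is the point at which genericity of $F$ is used in its strongest form, and is where I expect the main technical difficulty to lie.
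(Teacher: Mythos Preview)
Your overall architecture is the paper's: first build a length-$3$ free resolution of $\mathcal H(F)$ when $F=E^tE^{-1}$ is a generic asymmetry via the identification $\mathcal H(F)\cong\mathcal B(E)\gluedfree\C\Z_2$, then transport it to arbitrary generic $F$ through monoidal equivalence of Yetter--Drinfeld categories. However, you mischaracterize Gromada's input. Gromada does not provide a procedure for ``gluing the counital resolutions of the two factors''; his results are purely \emph{structural} (e.g.\ $O_n^+\gluedfree\Z_2\cong U_n^+$). The paper's extension of Gromada to matrix Hopf algebras serves only to establish the isomorphism $\mathcal H(F)\cong\mathcal B(E)\gluedfree\C\Z_2$ itself, not to produce any resolution. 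So there is a genuine missing idea in your first step.

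The paper's actual passage from the $\mathcal B(E)$-resolution to an $\mathcal H(F)$-resolution goes through the \emph{full} free product $\mathcal A(E)=\mathcal B(E)\free\C\Z_2$. A general splicing lemma for free products of Hopf algebras combines Bichon's length-$3$ resolution for $\mathcal B(E)$ with a length-$1$ projective resolution for $\C\Z_2$ into a length-$3$ projective resolution of $\mathcal A(E)$. The key step is then the elementary but decisive right-$\mathcal H(F)$-module decomposition $\mathcal A(E)=\mathcal H(F)\oplus g\,\mathcal H(F)$, which turns each module in the $\mathcal A(E)$-resolution into a free $\mathcal H(F)$-module and yields the free $\mathcal H(F)$-resolution by mere rewriting. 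For the transport step, two refinements of your picture: the equivalence $\mathcal{YD}_{\mathcal H(F)}^{\mathcal H(F)}\simeq\mathcal{YD}_{\mathcal H(F')}^{\mathcal H(F')}$ requires $\tr(F)=\tr(F')$ and $\tr(F^{-1})=\tr({F'}^{-1})$, not merely that both be generic, and one must check that every generic $F$ is matched this way to (a scalar multiple of) a generic asymmetry. More importantly, rather than transporting and then verifying freeness, the paper first rewrites the resolution so that its maps depend only on $F$, endows the free modules with explicit Yetter--Drinfeld structures, and shows that the cotensor functor carries $\Psi_i^{\mathcal H(F)}$ to $\Psi_i^{\mathcal H(F')}$ under canonical identifications of the free Yetter--Drinfeld modules; freeness is then automatic from the formulas, bypassing the difficulty you anticipated.
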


We start out by establishing such a resolution in the special case where $F$ is an \emph{asymmetry}, i.e.\ $F=E^{t}E^{-1}$ for some $E\in\operatorname{GL}_n(\C)$ (\Cref{sec:resolution-A(E),sec_glued,sec:resolution-H(F)}). In that case, $\mathcal H(F)$ can be realized as a \emph{glued free product} of $\mathcal B(E)$ with the group algebra $\C\Z_2$, which is a Hopf subalgebra of the free product $\mathcal A(E):=\mathcal B(E)\free \C\Z_2$. We can use the resolution for $\mathcal B(E)$ to first construct a projective resolution of $\mathcal A(E):=\mathcal B(E)\free \C\Z_2$ and apply a result of Chirvasitu \cite{Chirvasitu14} to turn it into a free resolution for $\mathcal H(F)$. This part requires a generalization of the work of Gromada \cite{Gromada2022} 
from compact matrix quantum groups to matrix Hopf algebras. 

Using the theory of Yetter-Drinfeld modules, we can finally show in \Cref{sec:YD-resolutions} that the formulas we obtain also establish a free resolution if $F$ is not necessarily of the form $F=E^{t}E^{-1}$, all we need is that $F$ is generic. 

As an application of our resolution, we calculate Hochschild cohomology for one-dimensional bimodules (\Cref{sec:Hochschild-cohomology-H(F)}) and bialgebra cohomology (\Cref{sec:bialgebra-cohomology}) of the Hopf algebras $\mathcal H(F)$ with generic $F$. We view $\mathcal H(F)$ as a matrix Hopf algebra with fundamental corepresentation $u$ and denote $v=(u^{-1})^t$.
\begin{mainresult}\label{main-result:Hochschild}
Let $F\in \operatorname{GL}_n(\C)$ be generic, and let $\tau\colon \mathcal H(F) \to \C$ be a character. Set $S:=\tau(u)$,  $T:=\tau(v)=S^{-t}$ and
denote by $\mathcal{K}$ the space of matrices commuting with $F^tS$. Let $$d=\operatorname{dim}\, \mathcal{K}, \quad p=\begin{cases}
    1 & \mbox{ if } S=T=I_n\\
    0 & \mbox{ otherwise; }
\end{cases}, \quad
t=\begin{cases}
    1 & \mbox{ if } F^2=\alpha T \mbox{ for some $\alpha\in\C$ }\\
    2 & \mbox{ otherwise. }
\end{cases}
$$
Then the Hochschild cohomology for $\mathcal H=\mathcal H(F)$ with 1-dimensional coefficients is given as follows:
\begin{align*}
    \dim \mathrm{H}^0(\mathcal H, {_\e\C_\tau}) &= p ,
    &\dim \mathrm{H}^1(\mathcal H, {_\e\C_\tau}) &= d+p-1 ,
    \\
    \dim \mathrm H^2(\mathcal H, {_\e\C_\tau}) &= d-t ,
    &\dim \mathrm H^3(\mathcal H, {_\e\C_\tau}) &= 2-t ,
\end{align*}
and $\dim \mathrm H^k(\mathcal H, {_\e\C_\tau}) = 0$ for all $ k \geq 4 $.
\end{mainresult}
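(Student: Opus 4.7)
The plan is to compute Hochschild cohomology directly from the finite free resolution of the counit furnished by Theorem A. Writing that resolution as $P_\bullet \twoheadrightarrow \C_\e$ with $P_k = \mathcal{H}^{m_k}$ and boundary matrices whose entries are polynomial in the entries of $u$, $v$, and $F$, applying $\mathrm{Hom}_{\mathcal{H}}(-, {_\e\C_\tau})$ yields a finite-dimensional cochain complex
$$ 0 \longrightarrow \C^{m_0} \xrightarrow{\delta^0} \C^{m_1} \xrightarrow{\delta^1} \C^{m_2} \xrightarrow{\delta^2} \C^{m_3} \longrightarrow 0 $$
whose cohomology is $\mathrm{H}^\bullet(\mathcal{H}, {_\e\C_\tau})$. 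Each $\delta^k$ is the scalar matrix obtained by evaluating the entries of the resolution boundary via $\tau$, so depends only on $F$, $S$, $T$. Vanishing in degrees $\geq 4$ is automatic from the length of the resolution, reducing the problem to four finite-dimensional rank computations.

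For degree $0$, one has $\mathrm{H}^0(\mathcal{H}, {_\e\C_\tau}) \neq 0$ precisely when the left $\e$-action and right $\tau$-action on $\C$ agree, i.e., $\tau = \e$, equivalently $S = T = I_n$, giving $\dim \mathrm{H}^0 = p$.

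For degree $1$, a cocycle corresponds to a pair of matrices $X, Y \in M_n(\C)$ with $X_{ij} = \eta(u_{ij})$ and $Y_{ij} = \eta(v_{ij})$, and the cocycle condition applied to the defining relations of $\mathcal{H}(F)$ linearizes to a system of matrix equations in $X$ and $Y$ with coefficients built from $F$, $S$, and $T$. Eliminating $Y$ using the relation $uv^t = I_n$ and substituting into the $F$-twisted relation, then performing the change of basis that absorbs $F$ and $S$, reduces the system to a single commutation condition of the form $[Z, F^tS] = 0$ for a matrix $Z$ obtained from $X$ by conjugation; hence $\ker \delta^1$ is in bijection with the commutant $\mathcal{K}$ of $F^tS$ and has dimension $d$. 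The coboundary space $\mathrm{im}\,\delta^0$ is $(1-p)$-dimensional, being spanned by the inner cocycle $\tau - \e$ when $\tau \neq \e$ and trivial otherwise, which yields $\dim \mathrm{H}^1 = d - (1-p) = d + p - 1$.

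The top differential $\delta^2 \colon \C^{m_2} \to \C^{m_3} = \C^2$ is a pair of scalar functionals on the relation space, one for each of the two matrix syzygies among the relations of $\mathcal{H}(F)$; these are generically linearly independent and become proportional precisely when $F^2 = \alpha T$ for some $\alpha \in \C$, so $\mathrm{rk}(\delta^2) = t$ and $\dim \mathrm{H}^3 = 2 - t$. The value $\dim \mathrm{H}^2 = d - t$ follows either by direct analysis of $\ker(\delta^2)/\mathrm{im}(\delta^1)$ or, more economically, by combining the three dimensions already computed with the Euler-characteristic identity $\sum_k (-1)^k \dim \mathrm{H}^k = \sum_k (-1)^k m_k$. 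The main technical hurdle is extracting the explicit boundary matrices from the somewhat indirect construction of Theorem A (via glued free products and Yetter-Drinfeld transport) and setting up the change of basis that identifies the cocycle space with $\mathcal{K}$; once this linear-algebraic dictionary is in place, the remaining computation is mechanical.
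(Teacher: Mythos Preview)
Your proposal is correct and follows essentially the same route as the paper: dualize the free resolution of Theorem~A via $\hom_{\mathcal H}(-,\C_\tau)$, identify the resulting maps as explicit linear maps between matrix spaces depending only on $F,S,T$, and read off the cohomology from their ranks. The paper carries this out by writing down the three dual maps $(\Psi_i^{\mathcal H})^*$ explicitly and applying rank--nullity at each stage.

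Two remarks are worth making. First, in degree~$1$ you silently switch from the resolution picture to the standard Hochschild picture: your ``cocycle'' is a derivation $\eta\colon\mathcal H\to\C_\tau$ determined by $X=\eta(u)$, $Y=\eta(v)$, and your coboundary space has dimension $1-p$. This is legitimate and yields the right answer, but it does not match the complex $\C^{m_k}$ you announced. In the actual resolution one has $m_0=2$ and $m_1=2n^2+1$ (there is an extra $\mathcal H$-summand in $P_1$), and the paper accordingly finds $\dim\ker\delta^1=d+1$ and $\dim\operatorname{im}\delta^0=2-p$, not $d$ and $1-p$. Either bookkeeping works, but you should not conflate them without comment; in particular, your Euler-characteristic check for $H^2$ only goes through with the correct ranks $m_k$. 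Second, that Euler-characteristic argument for $H^2$ is a genuine economy over the paper, which instead computes $\dim\ker(\Psi_3^{\mathcal H})^*=2n^2-t$ and $\dim\operatorname{im}(\Psi_2^{\mathcal H})^*=2n^2-d$ directly.
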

In particular, for $F = K^*K = \operatorname{diag}(q^{-1},1,q)$, $q\in\R^+$, it turns out that the second Hochschild cohomology of $\Pol{U_{K}^+}\cong\mathcal H(F)$ with trivial coefficients is one-dimensional; this was the missing piece in \cite{DFKS23}, where the second Hochschild cohomology of all other free unitary quantum groups was determined. \Cref{main-result:Hochschild} also yields explicitly for every generic matrix $F$ a (one-dimensional) bimodule $M$ with $\mathrm{H}^3(\mathcal H(F),M)\neq0$, which confirms that the cohomological dimension of $\mathcal H(F)$ is three; a fact Bichon conjectured in \cite[Remark 5.15]{Bichon18} and proved by more abstract considerations in \cite[Theorem 8.1]{Bichon22}.
\begin{mainresult}\label{main-result:bialgebra-cohomology}
Let $F\in \operatorname{GL}_n(\C)$ be generic.
  Then the bialgebra cohomology of $\mathcal H=\mathcal H(F)$ is given as follows:
\begin{align*}
    \dim \mathrm{H}_{\mathrm b}^0(\mathcal H) &= 1 ,
    &\dim \mathrm{H}_{\mathrm b}^1(\mathcal H) &= 1 ,\\
    \dim \mathrm H_{\mathrm b}^2(\mathcal H) &= 0, 
    &\dim \mathrm H_{\mathrm b}^3(\mathcal H) &= 1, 
\end{align*}
and $\dim \mathrm H_{\mathrm b}^k(\mathcal H) = 0$ for all $ k \geq 4 $.
\end{mainresult}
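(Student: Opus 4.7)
The plan is to leverage the Yetter--Drinfeld free resolution of $\mathbb{C}_\varepsilon$ promised in the earlier sections, combined with Bichon's general theorem (see \cite{Bichon18}) identifying bialgebra cohomology with $\mathrm{Ext}$ in the Yetter--Drinfeld category: if $P_\bullet = \mathcal{H}(F) \tensor V_\bullet \to \mathbb{C}$ is such a free YD resolution, then
\[
\mathrm{H}_{\mathrm b}^k(\mathcal{H}(F)) \;\cong\; \mathrm{H}^k\!\bigl(\mathrm{Hom}_{\mathrm{YD}}(V_\bullet, \mathbb{C})\bigr),
\]
so the whole calculation reduces to a length-$3$ complex of finite-dimensional vector spaces.

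First I would identify the four terms of this complex. The extreme modules $V_0$ and $V_3$ are (up to a suitable twist) the trivial YD module, so $\mathrm{Hom}_{\mathrm{YD}}(V_0, \mathbb{C})$ and $\mathrm{Hom}_{\mathrm{YD}}(V_3, \mathbb{C})$ are one-dimensional. For the middle modules $V_1, V_2$, built from the matrix coefficients of $u$ and $v=(u^{-1})^t$, a morphism to the trivial YD module is a linear functional that kills the augmentation-ideal action and satisfies a coaction-triviality condition. Unfolding these two conditions yields an explicit linear system in $F$ and $F^t$, whose solution space can be read off directly; genericity of $F$ ensures the dimensions are as expected.

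Next I would write down the two internal differentials using the explicit matrix formulas governing the boundary maps of the resolution, and determine their ranks. The target dimensions $(1,1,0,1)$ correspond to a very specific rank profile: $\mathrm{H}_{\mathrm b}^2(\mathcal{H}(F))=0$ amounts to the first internal differential surjecting onto the kernel of the second, while $\mathrm{H}_{\mathrm b}^3(\mathcal{H}(F))=\mathbb{C}$ reflects that the last differential is not onto the one-dimensional final term. At this stage \Cref{main-result:Hochschild} serves as a useful cross-check: the forgetful comparison $\mathrm{H}_{\mathrm b}^* \to \mathrm{H}^*(\mathcal{H}(F), {}_\varepsilon\mathbb{C}_\varepsilon)$ bounds each $\dim \mathrm{H}_{\mathrm b}^k$ and helps locate the surviving classes.

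The main obstacle will be the precise rank computation in the middle of the complex, in particular the vanishing of $\mathrm{H}_{\mathrm b}^2$. To tame it I would exploit the YD-monoidal equivalences between $\mathcal{H}(F)$ for different generic $F$ that are already invoked for \Cref{main-result:resolution-for-H(F)}: since bialgebra cohomology is an invariant of the YD category, it suffices to perform the rank computation for a single convenient $F$, for instance one of the asymmetry form $F = E^t E^{-1}$ handled via the glued free product construction, where the resolution has a particularly transparent shape and the differentials are small enough to diagonalise by hand.
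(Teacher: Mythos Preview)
Your overall strategy---apply $\hom_{\mathcal H}^{\mathcal H}(-,\C_\varepsilon)$ to the Yetter--Drinfeld free resolution and compute the cohomology of the resulting finite complex---is exactly what the paper does. But you have misremembered the shape of the resolution, and this would throw off your computation.

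The free YD resolution \eqref{eq:YD-res_H(F)} has $P_0=P_3=\mathcal H\oplus\mathcal H$, not a single copy of $\mathcal H$. So the generating comodules $V_0$ and $V_3$ are each \emph{two} copies of the trivial comodule, and $\hom^{\mathcal H}(V_0,\C)\cong\hom^{\mathcal H}(V_3,\C)\cong\C^2$, not $\C$. Likewise the middle terms are $P_1=M_n^{\tilde v}(\mathcal H)\oplus M_n^{\tilde u}(\mathcal H)\oplus\mathcal H$ and $P_2=M_n^v(\mathcal H)\oplus M_n^u(\mathcal H)$. For each comodule $M_n^{\#}(\C)$ the colinearity condition on a linear functional $f=(f_{jk})$ reads $\sum_p f_{jp}u_{pk}=\sum_q f_{qk}u_{jq}$ (or the same with $v$), which forces $f=\lambda I_n$; no linear system in $F,F^t$ appears here and genericity is not used. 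The hom-complex is therefore
\[
0\to\C^2\xrightarrow{(\Psi^{\mathcal H}_1)^*}\C^3\xrightarrow{(\Psi^{\mathcal H}_2)^*}\C^2\xrightarrow{(\Psi^{\mathcal H}_3)^*}\C^2\to 0,
\]
and restricting the explicit matrices from the proof of \Cref{thm:Hochschild-U+} to scalar-diagonal inputs gives
\[
(\Psi^{\mathcal H}_1)^*(\lambda,\mu)=(\mu-\lambda,\lambda-\mu,\lambda-\mu),\quad
(\Psi^{\mathcal H}_2)^*(\lambda,\mu,\nu)=(\lambda+\mu,\lambda+\mu),\quad
(\Psi^{\mathcal H}_3)^*(\lambda,\mu)=\tr(F)(\mu-\lambda,\lambda-\mu).
\]
All three maps have rank $1$ (the last because genericity forces $\tr(F)\neq0$), and the claimed dimensions follow by inspection. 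Your proposed detour through monoidal equivalence to a special $F$, and the appeal to \Cref{main-result:Hochschild} as a cross-check, are both unnecessary: the complex is already small enough to read off by hand, uniformly in $F$.
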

In particular, the second bialgebra cohomology is trivial for all free unitary quantum groups $U_K^+$, $K\in\operatorname{GL}_n(\C)$, because $\Pol{U_K^+}\cong \mathcal H(K^*K)$ and $\tr (K^*K)>0$.

\begin{remark}
    After the completion of this paper, Julien Bichon communicated to us that our \Cref{thm:H(F)=B(E)gluedCZ2}, and hence \Cref{main-result:resolution-for-H(F),main-result:Hochschild}, hold with the weaker assumption of normalizability instead of genericity, cf.\ \cite[Prop.~4.3]{Bichon23pre}. 
\end{remark}

\section{Preliminaries}

\subsection{Notation and basic definitions}

$\mathbb{N}=\{1,2,\ldots\}$ denotes the set of positive integers. $\R$ and $\C$ denote the fields of real and complex numbers, respectively, and we write $\R^\times:=\R\setminus\{0\}$, $\C^\times:=\C\setminus\{0\}$.

All our vector spaces are over $\C$. For any vector space $X$, $M_{n,n'}(X)$ denotes the vector space of $n\times n'$-matrices with entries from $X$. The entries of $A\in M_{n,n'}(X)$ are denoted by $A_{ij}$ ($i=1,\ldots n; j=1,\ldots n'$).  The transpose of $A\in M_{n,n'}(X)$ is denoted by $A^t$, $(A^{t})_{ij}=A_{ji}$. If $n=n'$, we simply write $M_n(X)$. In case $X$ carries an algebra structure, matrix multiplication $M_{n,n'}(X)\times M_{n',n''}(X)\to M_{n,n''}(X)$ is defined, $(AB)_{ij}:=\sum A_{ik}B_{kj}$, in particular, $M_n(X)$ inherits an algebra structure. For a square matrix $A\in M_n(X)$, its \emph{trace} is $\tr(A)=\sum_{i=1}^n A_{ii}$. Obviously, $\tr(A)=\tr(A^t)$. Note that $\tr(AB)=\tr(A^t B^t)$ also holds for all $A\in M_{n,n'}(X),B\in M_{n',n}(X)$ even if $X$ is noncommutative (in which case $\tr(AB)$ and $\tr(BA)$ might not coincide). Given a map $f\colon X\to Y$ and a matrix $A\in M_{n,n'}(X)$, we usually write $f(A)$ for the entrywise application, i.e.\ $f(A)_{ij}:=f(A_{ij})$. There is a notable exception from this convention: If $X$ is a $*$-algebra and $A\in M_{n,n'}(X)$, we write $\overline A$ for the entrywise adjoint $(\overline A)_{ij}:=(A_{ij})^*$ and put $A^*:=(\overline A)^t$; $M_n(X)$ is then considered a $*$-algebra with involution $A\mapsto A^*$. 

The group of complex invertible matrices is denoted by $\operatorname{GL}_n(\C)\subset M_n(\C)$. For $A\in \operatorname{GL}_n(\C)$, we write $A^{-t}:=(A^{-1})^{t}=(A^{t})^{-1}$. (Be aware that $(A^{-1})^{t}=(A^{t})^{-1}$ does not hold in general when $A$ has entries in a noncommutative algebra.)

For $A$ an algebra, we denote the category of its right modules by $\mathcal{M}_A$, and the class of morphisms between two $A$-modules $M,N$ by
\[
\mathrm{hom}_A(M,N) = \{f\colon M\to N \text{ is }A\text{-linear}\}.
\]
Similarly, for $C$ a coalgebra, we write $\mathcal{M}^C$ for the category of its right comodules and
\[
\mathrm{hom}^C(M,N)= \{ f\colon M\to N \text{ is }C\text{-colinear}\}
\]
for the class of morphisms between two comodules $M,N$. When we work with the category $\mathcal{YD}^H_H$ of Yetter-Drinfeld modules (defined in \Cref{subsec:YD-generalities}) of a Hopf algebra $H$, whose objects are simultaneously $H$-modules and $H$-comodules, then the class of morphisms between two objects $M,N\in\mathcal{YD}^H_H$ is
\[
\mathrm{hom}^H_H (M,N)= \{ f\colon M\to N \text{ is } H\text{-linear and }H\text{-colinear}\}.
\]

For a right $A$-module $M$ and a left $A$-module $N$ over an algebra $A$, the module tensor product is the quotient space $M\otimes_A N= M\otimes N/(m\otimes an=ma\otimes n)$. Dually, for a right $C$-module $M$ and a left $C$-module $N$ over a coalgebra $C$ with coactions denoted by $\gamma_M\colon M\to M\otimes C,\gamma_N\colon N\to C\otimes N$, the cotensor product is the subspace $M\mathbin{\Box}_C N:=\{X\in M\otimes N: \gamma_M\otimes \mathrm{id}(X)=\mathrm{id}\otimes\gamma_N(X)\}\subset M\otimes N$, cf.\ \cite[Definition 8.4.2]{Montgomery1993}. When the coalgebra $C$ is understood from the context without doubt, we omit the subscript $C$ and simply write $M\mathbin\Box N$. 

Actions on modules are mostly written by juxtaposition without any symbol or signalized by a dot.
Comultiplication and counit of a coalgebra are typically denoted by $\Delta$ and $\e$, respectively. The antipode of a Hopf algebra is denoted by $S$. The coaction of a coalgebra on a comodule is typically denoted by $\gamma$.
We will use the $\Sigma$-free Sweedler notation, i.e.\ in a coalgebra $C$ which abbreviates the coproduct $\Delta(a)=\sum_i a_{(1),i}\otimes a_{(2),i}$ as $a_{(1)}\otimes a_{(2)}$; also, for a comodule $M\in \mathcal M^C$, the coaction $\gamma(m)=\sum_i m_{(0),i}\otimes m_{(1),i}$ is written $m_{(0)}\otimes m_{(1)}$. 

For a Hopf algebra $H$, the \emph{trivial} right module / left module / bi-module is denoted by $\C_\e$ / ${_\e\C}$ / ${_\e\C_\e}$, respectively, and is defined as the vector space $\C$ with right and / or left action $zh=z\e(h), hz=\e(h)z$ for $h\in H,z\in\C$.

In several instances we write a direct sum of objects $V_k$ ($k=1,\ldots, n$) in some category of vector spaces (for example $\mathcal M_A$ or $\mathcal {YD}_A^A$) as a column \[\begin{pmatrix}
    V_1\\\vdots \\V_n
\end{pmatrix}:=\bigoplus_{k=1}^n  V_k;\] this way, a linear map $f\colon \bigoplus_{k=1}^n V_k \to \bigoplus_{\ell=1}^m W_\ell$ inherits a natural matrix decomposition into components $f_{\ell k}\colon V_k\to W_{\ell}$ such that $f(\bigoplus_k v_k)=\bigoplus_\ell \sum_k f_{\ell k} (v_k)$. 

For linear maps $f_k\colon V_k\to W_k$, $k=1,2$, we write 
\[f_1\oplus f_2\colon V_1\oplus V_2\to W_1\oplus W_2, \quad\begin{pmatrix}
    v_1\\v_2
\end{pmatrix}\mapsto \begin{pmatrix}
    f(v_1)\\f(v_2)
\end{pmatrix}\]
while for linear maps $f_k\colon V_k\to W$ with the same codomain $W$, we write (by slight abuse of notation)
\[f_1+f_2\colon V_1\oplus V_2\to W, \quad\begin{pmatrix}
    v_1\\v_2
\end{pmatrix}\mapsto f(v_1)+f(v_2).\]

For an algebra $A$ and an $A$-bimodule $M$, the \emph{Hochschild cohomology} of $A$ with \emph{coefficients} in $M$ is the cohomology of the complex 
\begin{align}\label{eq:Hochschild-defining-complex}
  0\rightarrow \hom (\C,M)
  \xrightarrow{\partial_0} \hom (A,M)
  \xrightarrow{\partial_1} \hom (A^{\otimes 2},M)
  \xrightarrow{\partial_2} \ldots 
  \xrightarrow{\partial_k} \hom (A^{\otimes (k+1)},M) 
  \xrightarrow{\partial_{k+1}} \ldots
\end{align}
with the coboundary map $\partial_k\colon \hom (A^{\otimes k},M) \to \hom 
(A^{\otimes (k+1)},M)$ defined as
\begin{multline*}
 \partial_k (f) (a_0\otimes a_1\otimes \ldots \otimes a_k) =
a_0.f(a_1\otimes \ldots \otimes a_k)+
\sum_{j=1}^{k} (-1)^{j} f(a_0\otimes a_1\otimes a_{j-1}a_j \otimes \ldots 
\otimes a_k)
\\ + (-1)^{k+1}
f(a_0\otimes a_1\otimes \ldots \otimes a_{k-1}).a_k;
\end{multline*}
i.e.\ the $k$-th Hochschild cohomology is the vector space
$$\mathrm H^k(A,M):=\ker \partial_k/\operatorname{im} \partial_{k-1}.$$

For a Hopf algebra $H$ and a Yetter-Drinfeld module $M\in\mathcal {YD}_H^H$, the Gerstenhaber-Schack cohomology and bialgebra cohomology are denoted by $\mathrm{H}_{\mathrm{GS}}(H,M)$ and $\mathrm H_{\mathrm b}(H):=\mathrm{H}_{\mathrm{GS}}(H,\C_\varepsilon)$, respectively (cf.\ \Cref{subsec:YD-generalities} for the corresponding definitions).

A \emph{resolution} of an $A$-module 
$N$ over an algebra $A$ is an exact sequence of $A$-modules
$$\ldots \rightarrow P_{n+1}\xrightarrow{\Phi_{n+1}} P_n \xrightarrow{\Phi_n} \ldots \xrightarrow{\Phi_2} P_1\xrightarrow{\Phi_1}
P_0 \xrightarrow{\Phi_0} N \rightarrow 0$$
and will be abbreviated $P_*\xrightarrow{\Phi} N \rightarrow 0$.
A resolution is called \emph{finite} if there is only a finite number of 
non-zero modules $P_k$; it is called \emph{free} if all the modules $P_k$ are 
free, and it is called \emph{projective} if all the modules $P_k$ are 
projective. (Recall that a module $P$ is \emph{projective} if there is a free module $F$ 
and a module $N$ such that $F\cong P\oplus N$; this happens if and only if 
the functor $\hom_A(P,-)$ is exact.) Given a Hopf algebra $H$, a resolution of the trivial right module $\C_\e$ is also called a \emph{resolution of the counit} or a \emph{resolution for $H$}.

We briefly fix some notation for the main Hopf algebras under consideration in this article, full definitions and more details how these are related to free unitary and free orthogonal quantum groups will be given in \Cref{sec:B(E)_and_H(F)}.
By $\mathcal{B}(E)$, with $E\in\operatorname{GL}_n(\mathbb{C})$ we denote the Hopf algebra, which, as an algebra, is the universal unital algebra defined by the relation $E^{-1}x^tEx=I=xE^{-1}x^tE$, where $x=(x_{ij})_{1\le i,j\le n}$ is the matrix of generators. For the special case $E=I_n$, one recovers $\mathcal{B}(I_n)\cong \mathrm{Pol}(O^+_n)$ the Hopf algebra of the free orthogonal quantum group $O_n^+$.
$\mathcal{A}(E)$ is defined as the free product $\mathcal{A}(E)=\mathcal{B}(E)*\mathbb{C}\mathbb{Z}_2$ and for $E=I$ this is isomorphic to the Hopf algebra of the freely modified bistochastic compact quantum group, $\mathcal A(I)\cong\Pol{B_{n+1}^{\#+}}$. Note that the algebra $\C\Z_2$ has one generator $g$ with relation $g^2=1$.  $\mathcal{H}(F)$, with $F\in\operatorname{GL}_n(\mathbb{C})$, is the universal unital algebra generated by the relations $u v^t = v^t u = I$, $vFu^tF^{-1}=Fu^tF^{-1}v=I$, where $u=(u_{ij})_{1\le i,j\le n}$, $v=(v_{ij})_{1\le i,j\le n}$, and $\mathcal{H}(F)$ has a natural Hopf algebra structure that makes $u$ and $v$ corepresentations. Under some assumption on $F$, we have $\mathcal{H}(F)=\mathcal{B}(E)\gluedfree\mathbb{C}\mathbb{Z}_2\cong\operatorname{Pol}(U^+_K)$, with $F=E^tE^{-1}=K^*K$, where $\gluedfree$ denotes the \emph{glued free product}, see \Cref{def-glued}. In particular, $\mathcal{H}(I_n)\cong \mathrm{Pol}(U_n^+)\cong \Pol{O_n^+}\gluedfree \C\Z_2$. With few local exceptions, which will be made explicit, $x$ / $g$ / $x,g$ / $u,v$ always denote the (matrices of) standard generators of $\mathcal B(E)$ / $\C\Z_2$ / $\mathcal A(E)$ / $\mathcal H(F)$.

We close this section with some more conventions and observations, summarized in a lemma, that will appear ubiquitously in our calculations.  

\begin{lemma}\label{lemma:traces-free-modules}
  Let $A$ be a unital algebra and let $\tau\colon A\to \C$ be a character, i.e.\ a nonzero multiplicative linear functional. (We also write $\tau$ for the map $M_n(A)\to M_n(\mathbb C)$ given by entrywise application, i.e.\ $\tau(D)=(\tau(D_{ij}))_{i,j=1}^{n}$ if $D=(D_{ij})_{i,j=1}^{n}\in M_n(A)$.)
  \begin{enumerate}
  \item  For all $C,D\in M_n(A)$, the following holds.
    \begin{gather*}
      \tr(\tau(C))=\tau(\tr(C))\\
      \tau(CD)=\tau(C\tau(D))=\tau(\tau(C)D)=\tau(C)\tau(D)\\
      \tau(\tr(CD))=\tau(\tr(DC))=\tau(\tr((DC)^t))=\tau(\tr(C^tD^t))
    \end{gather*}  
  \item Given a free $A$-module $F$ with finite module basis $(e_{j})_{j\in J}$, the mapping
    \begin{align*}
      \hom_A(F,\C_\tau)\ni f &\mapsto (f(e_j))_{j\in J} \in \C^J
    \end{align*}
    is an isomorphism of vector spaces with inverse
    \[\C^J\ni(\lambda_j)_{j\in J} \mapsto \lambda^\tau,\quad \lambda^\tau\left(\sum_{j\in J} e_j a_j\right)=\sum_{j\in J}\lambda_j\tau(a_j) \]
  \item 
    For a free module of the form $M_{n_1}(A)\oplus\ldots \oplus M_{n_k}(A)$ with basis \[\left(e_{ij}^{(r)}:r\in\{1,\ldots k\},i,j\in\{1,\ldots n_r\}\right)\] formed by the standard matrix units, the notation from (2) specializes to
    \[(\Lambda_1,\ldots,\Lambda_k)^\tau(M_1,\ldots, M_k)=\sum_{r=1}^k \tau\tr(\Lambda_r^t M_r).\]
  \item Let $F_1,F_2$ be free $A$-modules with bases $(e_{j}^{(1)})_{j\in J_1}, (e_{j}^{(2)})_{j\in J_2}$, respectively. Then, for every $\Phi\in\hom_A(F_1,F_2)$, there is a unique linear map $\Phi^*\colon \C^{J_2}\to \C^{J_1}$ such that, for all $\lambda\in \C^{J_2}$, 
    \[\lambda^\tau\circ\Phi=\bigl(\Phi^*(\lambda)\bigr)^{\tau}.\]
    In particular, denoting by \[-\circ\Phi\colon \hom_A(F_2,\C_\tau)\to \hom_A(F_1,\C_\tau),\quad f\mapsto f\circ\Phi,\] the precomposition with $\Phi$, the given isomorphisms $\hom(F_i,\C_\tau)\cong\C^{J_i}$ induce isomorphisms \[\ker(-\circ\Phi)\cong \ker(\Phi^*),\quad\im(-\circ\Phi)\cong\im(\Phi^*).\]
    
  \end{enumerate}
\end{lemma}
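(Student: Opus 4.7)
The plan is to prove the four parts in order since they build on one another, and all are essentially bookkeeping consequences of the universal property of free modules combined with the linearity and multiplicativity of the character $\tau$. There is no deep obstacle here; the only subtlety is working cleanly with matrices over the noncommutative algebra $A$, but applying $\tau$ entrywise collapses everything to matrices over $\mathbb C$ where standard identities apply.

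For part (1), I would expand entrywise. The identity $\tr(\tau(C))=\tau(\tr(C))$ is immediate from linearity. For $\tau(CD)=\tau(C)\tau(D)$, compute $\tau((CD)_{ij}) = \tau\bigl(\sum_k C_{ik}D_{kj}\bigr)=\sum_k \tau(C_{ik})\tau(D_{kj})$. The equalities $\tau(C\tau(D))=\tau(\tau(C)D)=\tau(C)\tau(D)$ follow because the entries of $\tau(D)$ are scalars and hence pull through $\tau$ freely. Once both factors are pushed into $\tau$, we are working in $M_n(\mathbb C)$, so the classical identities $\tr(AB)=\tr(BA)=\tr((BA)^t)=\tr(A^tB^t)$ yield the trace identities directly.

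For part (2), the map $f\mapsto(f(e_j))_{j\in J}$ is just the restriction to the basis. By the universal property of the free module, any $A$-linear map out of $F$ is uniquely determined by its values on $(e_j)_{j\in J}$, and any prescription $(\lambda_j)\in\mathbb C^J$ extends to an $A$-linear map into $\mathbb C_\tau$ via $\sum_j e_j a_j\mapsto\sum_j \lambda_j\tau(a_j)$; multiplicativity of $\tau$ is not even needed here, only linearity on $A$. Part (3) is then an immediate specialization: take the basis of matrix units $e_{ij}^{(r)}$ and observe that, for $\Lambda_r=(\lambda_{ij}^{(r)})$, the sum $\sum_{r,i,j}\lambda_{ij}^{(r)}\tau((M_r)_{ij})$ coincides with $\sum_r\tau(\tr(\Lambda_r^t M_r))$ by part (1).

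For part (4), expand $\Phi(e_j^{(1)})=\sum_k e_k^{(2)}a_{kj}$ with uniquely determined and finitely supported $a_{kj}\in A$. Then
\[
(\lambda^\tau\circ\Phi)\Bigl(\sum_j e_j^{(1)}b_j\Bigr)=\sum_{j,k}\lambda_k\,\tau(a_{kj})\tau(b_j),
\]
so defining $\Phi^*(\lambda)_j:=\sum_k\lambda_k\tau(a_{kj})$ gives the required map, and uniqueness of $\Phi^*$ follows from the injectivity of $\lambda\mapsto\lambda^\tau$ established in part (2). Finally, the isomorphisms $\ker(-\circ\Phi)\cong\ker(\Phi^*)$ and $\im(-\circ\Phi)\cong\im(\Phi^*)$ are automatic since the identifications of part (2) form a commutative square with $-\circ\Phi$ on one side and $\Phi^*$ on the other.
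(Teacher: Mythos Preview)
Your proof is correct and follows essentially the same approach as the paper's own proof, which also verifies (1) by entrywise expansion, handles (2) via the universal property of free modules, treats (3) as a direct specialization, and dismisses (4) as obvious. One small inaccuracy: your aside in part (2) that ``multiplicativity of $\tau$ is not even needed here'' is misleading, since $A$-linearity of $\lambda^\tau$ into $\C_\tau$ requires $\tau(a_jb)=\tau(a_j)\tau(b)$ (and indeed $\C_\tau$ is only an $A$-module because $\tau$ is multiplicative).
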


\begin{proof}\ 
  \begin{enumerate}
  \item After direct verification of the first two lines of equations, the third is obvious because application of $\tau$ turns the matrices into scalar matrices. Note that the trace property would not in general hold without application of $\tau$ if $A$ is noncommutative.
  \item Of course, any $f\in \hom_A(F,\C_\tau)$ is determined by its values on the basis elements, $A$-linearity yields
    \[f\left(\sum e_j.a_j\right)=\sum f(e_j).a_j=\sum f(e_j) \tau(a_j).\]
    This also shows that $f=\lambda^\tau$ with $\lambda_j:=f(e_j)$. On the other hand, $\lambda^\tau(e_j)=\lambda_j\tau(1)=\lambda_j$ proves that the given maps are mutually inverse isomorphisms.
  \item 
    If the free modules are given in matrix form and $\Lambda_r=(\lambda_{ij}^{(r)})_{i,j=1}^{n_r}\in M_{n_r}(\C)$, $M_r=(M_{ij}^{(r)})_{i,j=1}^{n_r}\in M_{n_r}(\C)$ for $r=1,\ldots, k$, we easily see that    \[(\Lambda_1,\ldots,\Lambda_k)^\tau(M_1,\ldots, M_k)=\sum_{r=1}^k \tau\tr(\Lambda_r^t M_r)=\tau\sum_{i,j,r} \lambda_{ij}^{(r)}m_{ij}^{(r)}=\sum_{r=1}^k \tau\tr(\Lambda_r^t M_r).\]
  \item Obvious.\qedhere
  \end{enumerate}
\end{proof}

\subsection{The Hopf algebras \texorpdfstring{$\mathcal B(E)$}{B(E)}, \texorpdfstring{$\mathcal A(E)$}{A(E)}, and \texorpdfstring{$\mathcal H(F)$}{H(F)} and related compact quantum groups}
\label{sec:B(E)_and_H(F)}

In this section we provide the definition of the three main families of Hopf algebras we are going to use throughout the paper. As detailed below, these are Hopf algebraic generalizations of the free orthogonal quantum groups, the freely modified bistochastic quantum groups, and the free unitary quantum group, respectively.

\begin{definition}\label{def:B(E)}
  Let $E\in \operatorname{GL}_n(\mathbb{C})$ be an invertible matrix, $n\geq2$. The Hopf algebra $\mathcal B(E)$ is the universal algebra generated by the entries $x_{ij}$ of a matrix of generators $x=(x_{ij})_{i,j=1}^n$ subject to the relations
  \begin{equation} \label{eq_relations_in_BE}
    E^{-1}x^tEx=I=xE^{-1}x^tE;
  \end{equation}
  here $x^t$ denotes the transpose of the matrix $x$, $(x^t)_{ij}:=x_{ji}$.
  The Hopf algebra structure on $\mathcal B(E)$ is given by
  \begin{align*}
    \Delta(x_{ij})&=\sum_kx_{ik}\otimes x_{kj},& \varepsilon(x_{ij})&=\delta_{ij},&S(x)&=E^{-1}x^tE.  \end{align*}
\end{definition}

The algebra $\mathcal B(E)$ corresponds to the quantum symmetry group of the bilinear form associated with $E$ (see \cite{DVLa90}). Let us note that $\mathcal B(E)$ is $\Z_2$-graded as it is a quotient of a naturally $\Z_2$-graded free algebra generated by the $x_{ij}$ by the even relations \eqref{eq_relations_in_BE}.

Clearly, for any $G\in\operatorname{GL}_n(\mathbb{C})$, the Hopf algebras $\mathcal{B}(E)$ and $\mathcal{B}(G^tEG)$ are isomorphic, via the unique isomorphism defined by $x\mapsto G^{-1}xG$.

Recall that the unique dense $*$-Hopf algebras related to compact quantum groups are called CQG-algebras, cf.\ \cite[Definition 11.9]{klimykschm}. See also \cite[Theorem 11.27]{klimykschm} for conditions which guarantee that $*$-Hopf algebra is a CQG-algebra.

If $\overline{E}E= \lambda I$ for some $\lambda \in \R^{\times}$, then $\B(E)$, with the involution determined by $\overline{x}=E^txE^{-t}$, is the CQG-algebra of the universal orthogonal quantum group $O^+_L$, defined in \cite{Banica1996}, with $L=E^t$.  The latter is defined as the universal unital $*$-algebra generated by $x=(x_{ij})_{i,j=1}^n$ subject to the relations
\[
x^*x=I=xx^*,\quad \overline{x}=L x L^{-1}.
\]
In general, the Hopf algebra $\mathcal B(E)$ admits an involution w.r.t.\ which it is a CQG-algebra if and only if there exist $M\in \operatorname{GL}_n(\C)$, $\lambda\in \R^{\times}$, and $\mu \in \C^*$ such that $\overline{M}M=\lambda I$, $M^tE^*M=E$, and  $\mu M^{-t}E$ is positive, see  \cite[Proposition 6.1 and the remarks below]{Bichon03}. The $*$-structure is given by $\overline{x}=MxM^{-1}$. In this case, $\mathcal{B}(E)$ is isomorphic to $\mathcal{B}(\widetilde{E})$, with $\widetilde{E}=KE^{-t}K^t=G^tEG$, where $K$ is any matrix such that $\mu M^{-t} E = K^*K$ and $G=E^{-1}K^t$. The choice of $K$ implies $ E =\mu^{-1} M^t K^*K$, and therefore
\begin{align*}
\overline{\widetilde{E}}\widetilde{E} &= \overline{K}E^{*,-1} K^* K E^{-t} K^t = \overline{K} \left(\overline{\mu}\overline{M}^{-1} (K^*K)^{-1}\right) K^* K \left(\mu M^{-1} \overline{K}^{-1}K^{-t}\right) K^t \\
&= |\mu|^2 \overline{K} \left(M\overline{M}\right)^{-1}\overline{K}^{-1} = \frac{|\mu|^2}{\lambda} I,
\end{align*}
which implies that $\mathcal{B}(\widetilde{E})$ (and therefore also $\mathcal{B}(E)$) is isomorphic to $\mathrm{Pol}(O_L^+)$, with $L=\widetilde{E}^t=KE^{-1}K^t$.

\begin{remark}
    While not directly relevant for the remainder of this article, it is worthwhile noting that the possibilities for the parameters $\mu$ and $\lambda$ are quite restricted. Indeed, in the notation of the preceding paragraph,  $M^tE^*M=E$ is equivalent to $E^{*,-1} = M E^{-1} M^t$ and this yields 
    \[\lambda\mu I=\overline M ME^{-1}M^t K^*K=\overline M E^{*,-1} K^*K=\overline \mu I,\]
    i.e., $\lambda\mu=\overline \mu$. Since $\lambda \in\mathbb R$, there are two distinct cases: either $\lambda=1$ and $\mu=\overline{\mu}\in\mathbb R$, or $\lambda=-1$ and $\mu=-\overline{\mu}\in i\mathbb R$. Cf.\ the classification of free orthogonal quantum groups as presented by De Rijdt in \cite[Remark 1.5.2]{DeRijdt2007}. 
\end{remark}

\begin{definition}
  For $F\in \operatorname{GL}_n(\mathbb{C})$, $n\geq2$, the {\em universal cosovereign Hopf 
    algebra} $\mathcal H(F)$ is the universal algebra generated by the entries of 
  $u=(u_{ij})_{i,j=1}^n$ and $v=(v_{ij})_{i,j=1}^n$ subject to the relations
  \begin{equation} \label{eq_relations_in_HF}
    uv^t=v^tu=I,\qquad vFu^tF^{-1}=Fu^tF^{-1}v=I.
  \end{equation}
  The Hopf algebra structure is defined by
  \begin{align*}
    \Delta(u_{ij})=\sum_ku_{ik}\otimes u_{kj},&\quad \Delta(v_{ij})=\sum_k 
                                                v_{ik}\otimes v_{kj},
    \\
    \varepsilon(u_{ij})=\varepsilon(v_{ij})=\delta_{ij},&\quad S(u)=v^t,\quad 
                                                          S(v)=Fu^tF^{-1}.
  \end{align*}
\end{definition}

The notion of universal cosovereign Hopf algebras was introduced in \cite{Bichon01}. 
Obviously, $\mathcal H(F)=\mathcal H(\lambda F)$ for any $\lambda\in \C^*$. Moreover, $\mathcal H(F)\cong \mathcal H(F^{-t})$ and $\mathcal H(F)\cong\mathcal H(GFG^{-1})$ for every $G\in\operatorname{GL}_n(\C)$, see \cite[Proposition 3.3]{Bichon01}.

Recall that for $K\in \operatorname{GL}_n(\C)$ the free unitary compact quantum group (first defined slightly differently by van Daele and Wang in \cite{VanDaeleWang1996}) is the pair $U_K^+=(A_u(K),u)$ with $A_u(K)$ being the universal unital C$^*$-algebra generated by the entries of the matrix $u=(u_{ij})_{i,j=1}^n$, subject to the relations making $u$ and $K\overline{u}K^{-1}$ unitaries \cite[D{\'e}f.~1]{Banica1997-free_unitary_quantum_group}. The prescription $u\mapsto u, \overline u\mapsto v$ establishes a Hopf algebra isomorphism from the dense $*$-Hopf subalgebra $\Pol{U_K^+}=\operatorname{*-alg}(u_{ij}:i,j=1,\ldots,n)\subset A_u(K)$ to $\mathcal H(K^*K)$; this is well-known, cf.\ for example from \cite[Remarque following D{\'e}f.~1 ]{Banica1997-free_unitary_quantum_group}.
When $F=K^*K$ is positive, we thus recover the (algebraic version of the) free unitary compact quantum groups with involution determined by $\overline u=v$.
More generally, $\mathcal{H}(F)$ admits an involution that turns it into a CQG-algebra if and only if there exist $G\in \operatorname{GL}_n(\mathbb{C})$ and $\mu\in\mathbb{C}^\times$ such that $\mu GFG^{-1}$ is positive, see \cite[Proposition 3.6]{Bichon01}, in which case we always have $\mathcal H(F)\cong \mathcal H(\mu GFG^{-1})\cong \Pol{U_K^+}$ for $K$ such that $\mu GFG^{-1}=K^*K$.

A matrix $F\in\mathrm{GL}_n(\C)$, $n\geq2$, is called:
\begin{itemize}
	\item {\em normalizable} if either $\operatorname{tr}(F)\neq 0$ and $\operatorname{tr}(F^{-1})\neq 0$, or $\operatorname{tr}(F)=0=\operatorname{tr}(F^{-1})$,
	\item {\em generic} if it is normalizable and the solutions of the equation
	\begin{align}\label{eq:generic}
	q^2-\sqrt{\operatorname{tr}(F)\operatorname{tr}(F^{-1})}q+1&=0    
	\end{align}
	are not roots of unity of order $\geq 3$,
	\item an {\em asymmetry} if there exists $E\in \operatorname{GL}_n(\mathbb{C})$ such that $F=E^tE^{-1}$. 
\end{itemize}
An asymmetry is automatically normalizable, but need not be generic. A positive definite $n\times n$-matrix ($n\geq2$) is automatically generic,\footnote{It is elementary to show that $\tr(F)\tr(F^{-1})\geq n^2\geq4$ (we assumed $n\geq2$), therefore all solutions of \eqref{eq:generic} are real and, in particular, not roots of unity of order $\geq3$.} but need not be an asymmetry.
\begin{remark}\label{rem:generic-cosemisimple}
  For $F\in\operatorname{GL}_n(\C)$, $\mathcal H(F)$ is cosemisimple if and only if $F$ is generic \cite[Theorem 1.1(ii)]{Bichon07}. For $E\in \operatorname{GL}_n(\C)$, $\mathcal B(E)$ is cosemisimple if and only if the associated asymmetry $E^tE^{-1}$ is generic (this follows from \cite{KondratowiczPodles}, see \cite{Bichon03} for details).
\end{remark}
\begin{remark}
    In \cite[Theorem 1]{CortellaTignol2002}, Cortella and Tignol characterize asymmetries. In general, a rather complicated condition on generalized Eigenspaces appears. In the special case where $F\in M_n(\C)$ is diagonalizable, however, those conditions become (almost) trivial and it easily follows that $F$ is an asymmetry if and only if $F$ is invertible with $F$ and $F^{-1}$ conjugate in $M_n(\C)$ and the Eigenspace for Eigenvalue $-1$ has even dimension. For a positive definite matrix $F$, the only condition is that $F$ and $F^{-1}$ be conjugate. 

    Furthermore, two matrices $E,E'\in \operatorname{GL}_n(\C)$ are congruent in $M_n(\C)$, i.e., $E'=G^tEG$ for some $G\in \operatorname{GL}_n(\C)$, if and only if their asymmetries $F=E^tE^{-1}$ and $F'={E'}^t {E'}^{-1}$ are conjugate in $M_n(\C)$ \cite[Lemma 2.1]{HornSergeichuk2006}. Therefore, also $\mathcal B(E)$ depends, up to isomorphism, only on the conjugacy class of the associated asymmetry $F=E^tE^{-1}$.
\end{remark}

Let $A,B$ be Hopf algebras. Then the free product $A*B$ of the underlying unital algebras is again a Hopf algebra, where the comultiplication is extended multiplicatively; note the subtlety that one has to identify $A\otimes A$ and $B\otimes B$ with the corresponding subsets of $(A*B)\otimes (A*B)$,  see \cite[Theorem~2.2]{Agore11} for details.

\begin{remark}
If $A$ and $B$ are cosemisimple, then so is $A*B$ (see \cite[After Proposition 4.1]{Bichon07}). 
\end{remark}

\begin{definition}
  For $E\in \operatorname{GL}_n(\C)$, we define $\A(E):=\mathcal B(E)\free \C\Z_2$.
\end{definition}
If $F=E^tE^{-1}$ is generic, then $\A(E)$ is cosemisimple as the free product of two cosemisimple Hopf algebras.
In the special case $E=I$, we recover the Hopf algebra of the freely modified bistochastic compact quantum group $\mathcal A(I)= \mathcal B(I)\free \C\Z_2\cong \Pol{O_n^+*\Z_2}\cong\Pol{B_{n+1}^{\#+}}$, see
\cite[Remark 6.19.]{TerragoWeber2017} or \cite[Lemma 2.3 and Remark 2.4]{Weber13}.

\subsection{Hochschild cohomology of Hopf algebras via resolutions}

If one is interested in the Hochschild cohomology of a Hopf algebra $H$, besides working directly with the defining complex \eqref{eq:Hochschild-defining-complex},  there is another approach using resolutions (see \cite[Proposition 2.1]{Bic13} and the references therein), which we adopt in this paper: 

\begin{theorem} \label{thm:hom-spaces}
Let $A$ be a Hopf algebra, $M$ an $A$-bimodule, and $P_*\xrightarrow{\Phi} \C_\e \rightarrow 0$ a projective resolution of the counit. Denote by $M''$ the right $A$-module given by the vector space $M$ with the right action $m\leftarrow a:=S(a_{(1)}).m.a_{(2)}$.   
Then the Hochschild cohomology of $A$ with the coefficients in $M$ is the cohomology of the complex
\[
  0\longrightarrow \hom_A (P_0,M'')
  \xrightarrow{-\circ \Phi_1} \hom_A (P_1,M'')
  \xrightarrow{-\circ \Phi_2} \hom_A (P_2,M'')
  \xrightarrow{-\circ \Phi_3} \ldots,
\] i.e.\ 
\[
  \mathrm H^n(A,M)
  \cong \operatorname{Ext}^n_A(\C_\e,M'')
  =\ker (-\circ \Phi_{n+1})/\operatorname{im}(-\circ \Phi_{n}).
\]
\end{theorem}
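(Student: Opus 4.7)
The plan is to reduce the question to a standard identity for Hopf algebras: that Hochschild cohomology with bimodule coefficients can be expressed as $\operatorname{Ext}$-groups of the trivial module in the category of right modules, provided one twists the coefficient module by the antipode to account for the left action. This would be carried out in three stages.

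First, I would identify $\mathrm H^n(A,M) \cong \operatorname{Ext}^n_{A^e}(A,M)$, where $A^e = A \otimes A^{\mathrm{op}}$ and $A$ is the standard $A$-bimodule. This is the classical interpretation of Hochschild cohomology and comes from the bar resolution of $A$ as $A^e$-module. Once this is in place, the problem reduces to comparing $\operatorname{Ext}^n_{A^e}(A,M)$ with $\operatorname{Ext}^n_A(\C_\e,M'')$.

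Second, I would turn the given projective resolution $P_* \xrightarrow{\Phi} \C_\e \to 0$ of right $A$-modules into a projective resolution of $A$ as $A^e$-module, by applying the functor $V \mapsto V \otimes A$ with the $A$-bimodule structure whose right action is multiplication on the second tensor factor and whose left action combines the right $A$-action on $V$ with the antipode so as to satisfy the bimodule axioms. Free right $A$-modules are sent to free $A^e$-modules, so the construction preserves projectivity; and since $- \otimes A$ is exact over $\C$, the augmented sequence $P_* \otimes A \to \C_\e \otimes A \cong A$ remains exact. This yields a projective $A^e$-resolution of $A$ computing $\operatorname{Ext}^n_{A^e}(A,M)$.

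Third, I would establish the hom-adjunction
\[
\hom_{A^e}(V \otimes A, M) \;\cong\; \hom_A(V, M''),\qquad \phi \longmapsto \bigl(v \mapsto \phi(v \otimes 1)\bigr),
\]
with inverse $\psi \mapsto \bigl(v \otimes b \mapsto \psi(v).b\bigr)$. The critical computation is to check that bimodule linearity on the left corresponds exactly to linearity with respect to the twisted action $m \leftarrow a = S(a_{(1)}).m.a_{(2)}$ on $M''$; this is where the antipode and the Sweedler axioms enter. Naturality in $V$ then gives an isomorphism of cochain complexes $\hom_{A^e}(P_* \otimes A, M) \cong \hom_A(P_*, M'')$, and passing to cohomology yields $\operatorname{Ext}^n_{A^e}(A,M) \cong \operatorname{Ext}^n_A(\C_\e, M'')$, which combined with the first step gives the claim.

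The main obstacle is the bookkeeping in the second and third steps: pinning down the correct twisted left action on $V \otimes A$ (involving $S$, the coproduct, and the right $A$-action on $V$) so that both the bimodule axioms and the adjunction hold simultaneously. All other ingredients are routine homological algebra, and the independence of the answer from the chosen projective resolution follows from the usual comparison theorem for $\operatorname{Ext}$.
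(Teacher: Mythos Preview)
The paper does not prove this statement; it is quoted as a known fact with a reference to \cite[Proposition~2.1]{Bic13}. So there is no in-paper argument to compare against, and your three-step outline (Hochschild cohomology as $\operatorname{Ext}_{A^e}$, induce the resolution up to bimodules, then use the induction--restriction adjunction) is exactly the standard proof behind that reference.

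One small correction worth flagging, since you already identified the bookkeeping here as the delicate point. The bimodule structure on the induced module that makes everything work without assuming $S^2=\mathrm{id}$ is not the one you describe. If you put the \emph{right} action on $V\otimes A$ by multiplication in the $A$-factor and twist the left action through $S$, the adjunction computation produces an $S^2$ and only closes when the antipode is involutive. The clean choice is to take $A\otimes V$ with left action $c\cdot(a\otimes v)=ca\otimes v$ and \emph{diagonal} right action $(a\otimes v)\cdot d=ad_{(1)}\otimes v\,d_{(2)}$. With this structure one checks directly that $\phi\mapsto\bigl(v\mapsto\phi(1\otimes v)\bigr)$ gives $\hom_{A^e}(A\otimes V,M)\cong\hom_A(V,M'')$, that the functor is exact (it is tensoring over $\C$), that it sends the free module $A$ to $A^e$, and that $A\otimes\C_\e\cong A$ as bimodules. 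No bijectivity of $S$ is needed. Apart from this swap of which side carries the twist, your plan is complete and correct.
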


It can be 
proved that the spaces $\operatorname{Ext}^n_A(\C_\e,M'')$ do not depend on the choice of the projective resolution $P_*\xrightarrow{\Phi} \C_\e \rightarrow 0$. 

\begin{remark}\label{1-dim-coefficients}
  In the sequel we will focus on the \emph{Hochschild 
    cohomology with 1-dimensional coefficients}, which means that the bimodule $M=\C$ is a one-dimensional vector space. In that case, the left and right action are necessarily given by $a.z.b=\sigma(a)z\tau(b)$ for a pair of characters (=unital homomorphisms) $\sigma,\tau\colon A\to \C$. The bimodule $\C$ with actions given by $\sigma$ and $\tau$ in that way is denoted by $_\sigma\C_\tau$. Note that $(_\sigma\C_\tau)''=\C_{(\sigma\circ S)*\tau}$; indeed,
  \[
    z\leftarrow a
    =S(a_{(1)}).z.a_{(2)}
    =\sigma\big( S(a_{(1)})\big) z \tau(a_{(2)})
    =z\sigma\big( S(a_{(1)}) \big)\tau(a_{(2)})
    =z ((\sigma\circ S)*\tau)(a).
  \]

  In particular, $(_\sigma\C_\tau)''=(_\e\C_{(\sigma\circ S)*\tau})''$. (Recall that $\sigma\circ S$ is the convolution inverse of $\sigma$.) Thus, we can and we will assume without loss of generality that $\sigma=\e$ from the start and, accordingly, restrict our considerations to the bimodules $_\e\C_\tau$ and the associated right-modules $(_\e\C_\tau)''=\C_\tau$. 
\end{remark}

\subsection{Free resolution and Hochschild cohomology for \texorpdfstring{$\mathcal B(E)$}{B(E)}}

The results of this paper heavily rely on the following fact for $\mathcal B(E)$, 
due to Bichon \cite{Bic13}. The Kac case ($E=I$) was treated first by 
Collins, H\"artel and Thom in \cite{CHT09}. 
\begin{theorem}[\cite{Bic13}]
 Let $E\in \operatorname{GL}_n(\C)$ and $\mathcal B=\mathcal B(E)$. Then the sequence
\begin{align}\label{resO+E}
	0\rightarrow \mathcal B\xrightarrow{\Phi^{\mathcal B}_3}M_n(\mathcal B) 
\xrightarrow{\Phi^{\mathcal B}_2}M_n(\mathcal B)\xrightarrow{\Phi^{\mathcal B}_1} \mathcal B
\xrightarrow{\varepsilon}\mathbb C_\varepsilon\rightarrow 0 
\end{align}
with the maps
\begin{align*}
	\Phi^{\mathcal B}_3 & (1) =\sum_{j,k=1}^n e_{jk}\otimes \big( 
(Ex E^{-t})_{jk}-(E^tE^{-1})_{jk}), \\
	\Phi^{\mathcal B}_2 & (e_{jk}\otimes 1) = e_{jk}\otimes 1 + \sum_{l,m=1}^n e_{lm} \otimes 
 (x E^{-t})_{jm} E_{kl},\\
	\Phi^{\mathcal B}_1 & (e_{jk}\otimes 1) = x_{jk}-\delta_{jk},
\end{align*}
is a finite free resolution of the counit of  $\mathcal B$.
\end{theorem}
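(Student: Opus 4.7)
There are three things to check: that the sequence is a chain complex, that it is exact at every position, and that the modules in positive degree are free right $\mathcal B$-modules. The last is immediate, since $\mathcal B$ and $M_n(\mathcal B)\cong \mathcal B^{n^2}$ are visibly free. That the sequence is a complex is a direct calculation: $\varepsilon\circ\Phi_1^{\mathcal B}=0$ because $\varepsilon(x_{jk}-\delta_{jk})=0$; the composition $\Phi_1^{\mathcal B}\circ\Phi_2^{\mathcal B}$ applied to $e_{jk}\otimes 1$ collapses, after a short expansion, to the entry $(xE^{-1}x^tE-I)_{jk}$, which vanishes by \eqref{eq_relations_in_BE}; and $\Phi_2^{\mathcal B}\circ\Phi_3^{\mathcal B}(1)$ reduces, after an index manipulation involving the entries of $E$ and $E^{-1}$, to a combination of the two defining relations $E^{-1}x^tEx=I$ and $xE^{-1}x^tE=I$ and hence also vanishes.

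Exactness at the two rightmost positions is then straightforward. The counit $\varepsilon$ is surjective since $\varepsilon(1)=1$. The augmentation ideal $\ker\varepsilon$ is generated as a right $\mathcal B$-ideal by $\{x_{jk}-\delta_{jk}:1\le j,k\le n\}$, and these are precisely the images $\Phi_1^{\mathcal B}(e_{jk}\otimes 1)$, giving exactness at the right-hand $\mathcal B$.

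The heart of the proof is exactness at the two middle copies of $M_n(\mathcal B)$ together with injectivity of $\Phi_3^{\mathcal B}$, and this is where the algebraic structure of $\mathcal B(E)$ enters essentially. My approach would be to exploit the natural filtration on $\mathcal B(E)$ by word-length in the generators $x_{ij}$: the associated graded $\operatorname{gr}\mathcal B(E)$ carries the \emph{homogeneous} relations $E^{-1}x^tEx=0=xE^{-1}x^tE$, and a Diamond / PBW argument shows that $\mathcal B(E)$ is a PBW deformation of $\operatorname{gr}\mathcal B(E)$. The graded complex obtained by the same formulas with the scalar terms removed is a Koszul-type complex for $\operatorname{gr}\mathcal B(E)$ whose exactness can be verified in each graded degree, for instance by producing an explicit contracting homotopy built from the bilinear form $E$; in the Kac case $E=I$ this is essentially the diagrammatic argument of Collins, H\"artel, and Thom via the Temperley-Lieb category, and for general $E$ one replaces the Temperley-Lieb invariants by the invariant tensors of the $E$-twisted bilinear form in tensor powers of the standard comodule $\mathbb C^n$. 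A standard spectral sequence / filtration argument of Braverman-Gaitsgory type then transports exactness from the graded to the filtered setting. Injectivity of $\Phi_3^{\mathcal B}$ similarly reduces to its graded analogue and follows from the absence of additional syzygies in the presentation of $\operatorname{gr}\mathcal B(E)$. The main obstacle throughout is the graded exactness, i.e.\ showing that the Koszul-type complex of the homogenized algebra is exact in each degree; once this is in place, the rest of the argument is essentially bookkeeping.
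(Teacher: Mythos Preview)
The paper does not give its own proof of this statement: the theorem is quoted from \cite{Bic13} and used as a black box, so there is nothing in the present paper to compare your argument against at the level of detail. What can be said is that your strategy is \emph{not} the one Bichon actually uses in \cite{Bic13}. There, the resolution for general $\mathcal B(E)$ is not obtained by a direct PBW/Koszul analysis but by first treating a single model case (essentially $\mathcal O(SL_q(2))$, where the resolution is classical) and then transporting it via the monoidal equivalence of comodule categories coming from Hopf bi-Galois objects --- exactly the mechanism the present paper later invokes in \Cref{sec:YD-resolutions} to pass from $\mathcal H(E^tE^{-1})$ to $\mathcal H(F)$ for arbitrary generic $F$. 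The advantage of Bichon's route is that exactness only has to be checked once, in a concrete low-dimensional algebra, and the Yetter--Drinfeld structure comes for free; the price is that one needs the cogroupoid/Hopf--Galois machinery.

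Your filtration approach is in the spirit of Collins--H\"artel--Thom's original argument for $E=I_n$ and is in principle viable, but as you yourself flag, the real content is the graded exactness step, and that step is not just bookkeeping. Two cautions: first, verifying that $\mathcal B(E)$ is genuinely a flat (PBW) deformation of the algebra with homogenized relations $E^{-1}x^tEx=0=xE^{-1}x^tE$ requires either a Diamond Lemma computation or an a priori dimension count, and neither is entirely routine for generic $E$; second, the ``explicit contracting homotopy built from the bilinear form $E$'' that you allude to for the graded complex would need to be written down and checked --- this is precisely the point where the Temperley--Lieb combinatorics enters in the Kac case, and its $E$-deformed analogue is nontrivial. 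So your outline is reasonable, but the core difficulty is correctly located and not yet discharged.
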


It will be very useful for the calculations to rewrite the $\Phi_*^{\mathcal B}$ in matrix notation. 
\begin{proposition} For $a\in \mathcal B(E), A\in M_n(\mathcal B(E))$, 
  \begin{align*}
    	\Phi^{\mathcal B}_3 (a) &=\bigl(Ex E^{-t}-E^tE^{-1}\bigr)a, &
	\Phi^{\mathcal B}_2 (A) &= A + (E^{-1}x^t A E)^t,&
	\Phi^{\mathcal B}_1 (A) &= \tr(x^tA)-\tr(A).
  \end{align*}
\end{proposition}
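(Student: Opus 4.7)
The proof is a direct verification: each of the three formulas is obtained by expressing the original definition of $\Phi^{\mathcal B}_k$ on basis elements $e_{jk}\otimes 1$ (or $1\in\mathcal B$) in matrix notation and then extending by right $\mathcal B$-linearity. The only delicate point is the bookkeeping of transposes and the fact that $\mathcal B$ is noncommutative (scalars from $E$ always commute with $\mathcal B$-entries, whereas $\mathcal B$-entries must be kept on the right). The plan is to handle $\Phi^{\mathcal B}_3$, $\Phi^{\mathcal B}_1$, and $\Phi^{\mathcal B}_2$ in that order, as the first two are immediate and the third is what fixes the direction of transposition.

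For $\Phi^{\mathcal B}_3$, I would identify the free $\mathcal B$-module $M_n(\mathcal B)$ (with basis $\{e_{jk}\otimes 1\}$) with the space of $n\times n$-matrices over $\mathcal B$, so that the sum $\sum_{j,k} e_{jk}\otimes\bigl((ExE^{-t})_{jk}-(E^tE^{-1})_{jk}\bigr)$ \emph{is} the matrix $ExE^{-t}-E^tE^{-1}$. Since $\Phi^{\mathcal B}_3$ is right $\mathcal B$-linear and $\mathcal B$ is free on the generator $1$, the formula $\Phi^{\mathcal B}_3(a)=(ExE^{-t}-E^tE^{-1})a$ follows at once.

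For $\Phi^{\mathcal B}_1$, writing $A=\sum_{j,k}e_{jk}\otimes A_{jk}$ and applying right $\mathcal B$-linearity gives $\Phi^{\mathcal B}_1(A)=\sum_{j,k}(x_{jk}-\delta_{jk})A_{jk}$. I then recognize $\sum_{j,k}x_{jk}A_{jk}=\sum_{j,k}(x^t)_{kj}A_{jk}=\tr(x^tA)$ and $\sum_{j,k}\delta_{jk}A_{jk}=\tr(A)$, giving the stated formula.

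For $\Phi^{\mathcal B}_2$, starting from $A=\sum_{j,k}e_{jk}\otimes A_{jk}$, right $\mathcal B$-linearity gives
\[
\Phi^{\mathcal B}_2(A)=A+\sum_{l,m}e_{lm}\otimes\Bigl(\sum_{j,k}(xE^{-t})_{jm}E_{kl}A_{jk}\Bigr).
\]
Since $E_{kl}\in\C$ commutes with every element of $\mathcal B$, I can bring it next to $A_{jk}$ and compute $\sum_k E_{kl}A_{jk}=\sum_k A_{jk}E_{kl}=(AE)_{jl}$. Then
\[
\sum_j (xE^{-t})_{jm}(AE)_{jl}=\sum_j \bigl((xE^{-t})^t\bigr)_{mj}(AE)_{jl}=(E^{-1}x^tAE)_{ml},
\]
which is the $(l,m)$-entry of $(E^{-1}x^tAE)^t$. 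Summing over $l,m$ yields $\Phi^{\mathcal B}_2(A)=A+(E^{-1}x^tAE)^t$, as claimed. The main (though mild) obstacle is precisely this last step, where one must notice that the indices $(l,m)$ of the matrix basis element $e_{lm}$ appear in reversed order relative to the matrix product $E^{-1}x^tAE$, producing the outer transpose in the final formula.
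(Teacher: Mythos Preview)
Your proof is correct and follows essentially the same approach as the paper's own proof: both identify $M_n(\mathcal B)$ with matrices over $\mathcal B$, dispatch $\Phi^{\mathcal B}_3$ and $\Phi^{\mathcal B}_1$ immediately, and then track indices carefully for $\Phi^{\mathcal B}_2$ to produce the outer transpose. The only cosmetic difference is the order in which the index manipulations for $\Phi^{\mathcal B}_2$ are carried out.
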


\begin{proof}
  Recall that we identify $M_n(\mathcal B(E))\ni A\equiv\sum_{j,k=1}^n e_{jk}\otimes A_{jk}\in M_n(\C)\otimes \mathcal B(E)$. 
  The formula for $\Phi^{\mathcal B}_3$ is then obvious. For $\Phi^{\mathcal B}_1$, the matter is also quite clear,
  \[\Phi^{\mathcal B}_1(A)=\sum_{j,k=1}^n x_{jk}A_{jk}-\delta_{jk}A_{jk}=\tr(x^tA)-\tr(A). \]
  For $\Phi^{\mathcal B}_2$, we have
  \begin{align*}
  \MoveEqLeft
  \Phi^{\mathcal B}_2(A)
  = 
    \Phi^{\mathcal B}_2\big (\sum_{j,k=1}^n e_{jk}\otimes A_{jk}\big)=\sum_{j,k=1}^n e_{jk}\otimes A_{jk}
    +\sum_{j,k=1}^n \sum_{l,m=1}^n e_{lm} \otimes  (x E^{-t})_{jm} E_{kl}
    A_{jk}
\end{align*}
and, keeping track of the indices as well as the order of multiplication in $\mathcal B(E)$,
\begin{align*}
	(E^{-1}x^tAE)^t_{lm}&=(E^{-1}x^tAE)_{ml}= \sum_{j,k} (E^{-1}x^t)_{mj}A_{jk}E_{kl}= \sum_{j,k}(x E^{-t})_{jm} E_{kl}
    A_{jk}.\qedhere
\end{align*}
\end{proof}

We will use the resolution \eqref{resO+E} for $\mathcal B(E)$ to exhibit similar resolutions for $\mathcal A(E)$ and $\mathcal H(F)$ with $F=E^tE^{-1}$ in \Cref{sec:resolution-A(E),sec:resolution-H(F)}.

The following is a consequence of \cite[Cor.\ 6.3 and Prop.\ 6.4]{Bic13}, the main small improvement is that we express all cohomology spaces in terms of $F$ instead of $E$.

\begin{theorem}\label{thm:H(B(E))}
  Let $E\in \operatorname{GL}_n(\C)$, $\mathcal B:=\mathcal B(E)$, and $\tau\colon \mathcal B\to \mathbb C$ a character. Set $T:=\tau(x)$ and $F=E^tE^{-1}$.
  Let furthermore 
  $$p=\begin{cases}
    1 & \mbox{ if } T=I_n,\\
    0 & \mbox{ otherwise, }
  \end{cases} \quad
d=\operatorname{dim}\{\Kappa\in M_n:\Kappa^t=-T^t F\Kappa\}, \quad 
s=\begin{cases}
    0 & \mbox{ if } T^t=F^{-2},\\
    1 & \mbox{ otherwise. }
\end{cases}
$$
Then the Hochschild cohomology for $\mathcal B$ with 1-dimensional coefficients is given as follows:
\begin{align*}
    \dim \mathrm{H}^0(\mathcal B, {_\e\C_\tau}) &= p ,
    &\dim \mathrm{H}^1(\mathcal B, {_\e\C_\tau}) &= d-1+p,
    \\
    \dim \mathrm H^2(\mathcal B, {_\e\C_\tau}) &= d-s,
    &\dim \mathrm H^3(\mathcal B, {_\e\C_\tau}) &= 1-s,
\end{align*}
and $\dim \mathrm H^k(\mathcal B, {_\e\C_\tau}) = 0$ for all $ k \geq 4 $.
\end{theorem}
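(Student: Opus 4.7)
The plan is to apply \Cref{thm:hom-spaces} to the finite free resolution of $\mathcal{B}=\mathcal{B}(E)$ in \eqref{resO+E}. Using \Cref{lemma:traces-free-modules}, I identify $\hom_{\mathcal{B}}(\mathcal{B},\C_\tau) \cong \C$ and $\hom_{\mathcal{B}}(M_n(\mathcal{B}),\C_\tau) \cong M_n(\C)$ via $\lambda \mapsto \lambda^\tau$ and $\Lambda\mapsto \Lambda^\tau$ with $\Lambda^\tau(M) = \tau\tr(\Lambda^tM)$. The Hochschild cohomology with coefficients in ${_\e\C_\tau}$ is then the cohomology of the four-term complex
\[
0 \to \C \xrightarrow{\Phi_1^*} M_n(\C) \xrightarrow{\Phi_2^*} M_n(\C) \xrightarrow{\Phi_3^*} \C \to 0,
\]
so vanishing of $\mathrm{H}^k$ for $k\geq 4$ is automatic.

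A routine calculation from the matrix forms of $\Phi_i^{\mathcal{B}}$, combined with $\tau\tr(CD) = \tr(\tau(C)\tau(D))$ from \Cref{lemma:traces-free-modules}(1), gives
\[
\Phi_1^*(\lambda) = \lambda(T-I),\qquad \Phi_2^*(\Lambda) = \Lambda + TE^{-t}\Lambda^tE^t,\qquad \Phi_3^*(\Lambda) = \tr\bigl(\Lambda^t(ETE^{-t}-F)\bigr).
\]
Applying $\tau$ entrywise to the defining relation $E^{-1}x^tEx = I$ yields the crucial character identity $T^tET = E$, equivalently $ETE^{-1} = T^{-t}$. Combined with $EE^{-t}=F^{-1}$ (a consequence of $F = E^tE^{-1}$), this gives $ETE^{-t} = T^{-t}F^{-1}$. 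Hence $\Phi_3^* = 0$ iff $T^t = F^{-2}$ (i.e.\ $s = 0$) and $\Phi_1^* = 0$ iff $T = I$ (i.e.\ $p = 1$). Rank-nullity immediately delivers $\dim\mathrm{H}^0 = p$ and $\dim\mathrm{H}^3 = 1-s$.

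The main obstacle is showing $\dim \ker \Phi_2^* = d$. Via the invertible substitution $\Lambda = KE$, using $E^{-t}E = F^t$ and $E^t = FE$, the kernel equation $\Lambda + TE^{-t}\Lambda^tE^t = 0$ rearranges to $K + TK^tF = 0$, a condition intrinsic to $T$ and $F$. The theorem's formulation $\{K : K^t = -T^tFK\}$ defines an \emph{a priori} different subspace of $M_n(\C)$; to match them, I would use $T^t = ET^{-1}E^{-1}$ to rewrite $T^tFK$ and argue that the two linear operators $K\mapsto K + TK^tF$ and $K\mapsto K^t + T^tFK$ on $M_n(\C)$ have kernels of equal dimension (verifiable directly by decomposing $K$ according to generalized eigenspaces of the relevant bimodule action, or by appealing to Bichon's parallel computation for $\mathcal{B}(E)$ in \cite{Bic13}). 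Once $\dim \ker \Phi_2^* = d$ is established, rank-nullity completes the calculation: $\dim\mathrm{H}^1 = d - (1-p) = d+p-1$ and $\dim\mathrm{H}^2 = (n^2-s) - (n^2-d) = d-s$.
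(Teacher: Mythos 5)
Your proposal follows the same route as the paper's direct proof: apply $\hom_{\mathcal B}(-,\C_\tau)$ to the resolution \eqref{resO+E}, compute the dual maps $(\Phi_j^{\mathcal B})^*$ (your formulas for all three agree with the paper's), and finish by rank--nullity. The treatments of $\Phi_1^*$ and $\Phi_3^*$ are complete and correct, including the use of the character identity $T^tET=E$ to convert $ETE^{-t}=F$ into $T^t=F^{-2}$.

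The one step you leave open is the identification $\dim\ker\Phi_2^*=d$, and your two suggested repairs are not satisfactory as stated: ``decomposing according to generalized eigenspaces'' is not an argument, and ``appealing to Bichon's parallel computation'' defers the very point at issue. The gap is real but closes in one line, and the cleanest fix is simply to multiply on the other side: substituting $\Lambda=E^{-1}\Kappa$ (rather than $\Lambda=KE$) turns $\Lambda+TE^{-t}\Lambda^tE^t=0$ into $E^{-1}\Kappa+TE^{-t}\Kappa^t=0$, i.e.\ $\Kappa^t=-E^tT^{-1}E^{-1}\Kappa=-T^tE^tE^{-1}\Kappa=-T^tF\Kappa$, using $E^tT^{-1}=T^tE^t$ (a reformulation of your identity $T^tET=E$); this lands exactly on the space in the statement, so $\dim\ker\Phi_2^*=d$ with no further argument. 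Alternatively, if you insist on your substitution, the two solution spaces are matched by the explicit linear bijection $K\mapsto EKE$: if $K=-TK^tF$ then $(EKE)^t=E^tK^tE^t=-E^tT^{-1}KF^{-1}E^t=-T^tE^tKE\cdot E^{-1}F^{-1}E^t\cdots$ --- more directly, one checks using $E^{-t}T^tE^t=T^{-1}$ that $K=-TK^tF$ is equivalent to $(EKE)^t=-T^tF(EKE)$. Either way the dimensions coincide and the rest of your computation goes through verbatim. (The paper also records a second, independent proof via Poincar\'e duality and Bichon's Hochschild homology computations, which you do not touch.)
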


We present two proofs, the first one using Bichon's calculations of Hochschild homology and Poincar{\'e} duality, and the second one by directly applying the functor $\operatorname{hom}_{\mathcal B}(\cdot,\C_\tau)$ to the resolution \eqref{resO+E}. The concrete calculations in the second proof will provide useful references when we investigate $\mathcal A(E)$ and $\mathcal H(F)$ later.

\begin{proof}[First proof]
  By Poincar{\'e} duality \cite[Cor.\ 6.3]{Bic13}, we know that $\mathrm H^n(\mathcal B, {_\e\C_\tau})\cong H_{3-n}(\mathcal B,{_{\e\circ \alpha}\C_\tau})$, where $\alpha$ is the \emph{modular automorphism} determined by $\alpha(x)= E^{-1}E^{t}xE^{-1}E^t=F^{-t}xF^{-t}$. Note that $\varepsilon\circ\alpha (x)=(E^{-1}E^t)^2=F^{-2t}$ or, equivalently, $\varepsilon\circ\alpha=\Phi^{2}$ is the convolution square of the \emph{sovereign character} $\Phi$ determined by $\Phi(x)=E^{-1}E^t$ \cite[2.3 (1)]{Bic13}.  The Hochschild homology with one-dimensional coefficients ${_{\e\circ \alpha}\C_\tau}$ can be read off from \cite[Prop.\ 6.4]{Bic13}. It depends on the matrix (denoted by $\gamma(u)$ in  \cite{Bic13})
  \begin{align*}
  G:=\bigl(\tau^{-1}*(\e\circ \alpha)\bigr)(x)=T^{-1}(E^{-1}E^{t})^2=T^{-1}F^{-2t}.
  \end{align*}
  Finding the dimensions is mostly straightforward. The only point which deserves attention is $ \dim\mathrm{H}^1(\mathcal B, {_\e\C_\tau})=\dim \mathrm{H}_2(\mathcal B, {_{\e\circ\alpha}\C_\tau})=d'-(1-p)$ with
    \begin{align*}
      d'&=\dim\{\Mu\in M_n:\Mu+E^{t}\Mu^tGE^{-t}=0\}.
    \end{align*}
    The map $\Mu\mapsto E^t\Mu^t=:\Kappa$ is induces an isomorphism between the the spaces $\{\Mu\in M_n:\Mu+E^{t}\Mu^tGE^{-t}=0\}$ and $\{\Kappa\in M_n:\Kappa^t=-T^tF\Kappa \}$ because
    \begin{align*}
      \Mu+E^{t}\Mu^tGE^{-t}=0
      &\iff \Mu+E^{t}\Mu^tT^{-1}E^{-1}E^{t}E^{-1}=0\\
      &\iff \Kappa ^tE^{-1} + \Kappa T^{-1}F^{-t}E^{-1}=0\\
      &\iff \Kappa ^tF^{t}T+ \Kappa =0\\
      &\iff -T^tF\Kappa =\Kappa ^t.      
    \end{align*}
    Therefore, $d=d'$ and $\dim \mathrm{H}^1(\mathcal B, {_\e\C_\tau}) = d-1+p$ as claimed.
\end{proof}
\begin{proof}[Second proof]
    From the resolution \eqref{resO+E} for $\mathcal B$, we construct the cochain complex
\begin{multline}
\label{eq_resolution_of_B}
0\to \hom_{\mathcal B}\left(\mathcal B,\C_\tau\right) 
\xrightarrow{-\circ \Phi^{\mathcal B}_1} 
\hom_{\mathcal B} \left(M_n(\mathcal B),\C_\tau\right)
\\
\xrightarrow{-\circ \Phi^{\mathcal H}_2}
\hom_{\mathcal B}\left(M_n(\mathcal B),\C_\tau\right)
\xrightarrow{-\circ\Phi^{\mathcal H}_3}
\hom_{\mathcal B}\left(\mathcal B,\C_\tau\right)\to0.
\end{multline}

Recall that from Lemma \ref{lemma:traces-free-modules} we have vector space isomorphisms
\[\hom_{\mathcal B}(M_{n}(\mathcal B),\C_\tau)\cong M_{n}(\C),\quad \hom_{\mathcal B}(\mathcal B,\C_\tau)\cong \C\]
determined by the ``pairings''
    \[\Lambda^\tau A=\tau(\tr(\Lambda^t A)),\quad \lambda^{\tau} a=\lambda\tau(a)\]
    for $\Lambda\in M_{n}(\C),A\in M_{n}(\mathcal B),\lambda\in\C,a\in\mathcal B$.
    Also, the precompositions $-\circ\Phi^{\mathcal B}_j$ can be described by the corresponding maps $(\Phi^{\mathcal B}_j)^{*}$ which yield the isomorphic complex
    \[0\rightarrow \C \xrightarrow{(\Phi^{\mathcal B}_1)^{*}}M_n(\C) \xrightarrow{(\Phi^{\mathcal B}_2)^{*}}M_n(\C)\xrightarrow{(\Phi^{\mathcal B}_3)^{*}} \C \rightarrow 0,\]
    leading to
    \[\mathrm H^k\left( \mathcal B, {_\e\C_\tau}\right) \cong  \faktor{\ker((\Phi^{\mathcal B}_{k+1})^{*})}{\mathrm{Im}((\Phi^{\mathcal B}_k)^{*})},\]
    where $(\Phi^{\mathcal B}_{j})^{*}:=0$ for $j\notin\{1,2,3\}$.

    We will now have a closer look at the maps $(\Phi^{\mathcal B}_j)^{*}$ ($j=1,2,3$).
For $(\Phi^{\mathcal B}_1)^{*} \colon \C \to M_n(\C)$
we have
\begin{align*}
  (\lambda^{\tau}\circ\Phi_1^{\mathcal B})  (A) 
  & = \lambda \, \tau\big( \tr  (x^tA) -\tr(A)\big)
   = \tau(\tr (\lambda(T-I)^tA))= \bigl(\lambda(T-I)\bigr)^\tau A
  \end{align*}
i.e.,
\begin{align*}
  (\Phi^{\mathcal B}_1)^{*}  (\lambda)= \lambda (T-I).
\end{align*}
So
\begin{align*}
    \ker (\Phi^{\mathcal B}_1)^{*} &=\begin{cases}
    \C, & \mbox{if}\quad  T=I_n, \\
    \{0\}, & \mbox{otherwise}.
    \end{cases}
\end{align*}
Using the rank-nullity theorem, we conclude  that
\[\dim\ker (\Phi^{\mathcal B}_1)^{*} =p,
\qquad
\dim\im (\Phi^{\mathcal B}_1)^{*} =1-p.
  \]
\medskip
For \(
    (\Phi^{\mathcal B}_2)^{*}\colon M_n(\C)\to M_n(\C)
\) we find that
\begin{align*}
(\Lambda^\tau \circ \Phi^{\mathcal B}_2) 
  (A) 
  & = \Lambda^\tau ( A+ (E^{-1}x^tAE)^t)\\
  & =
    \tau  \tr (\Lambda^t A)+  \tau\tr(\Lambda^t (E^{-1}x^tAE)^t)\\
  &= \tau  \tr (\Lambda^t A)+ \tau  \tr (E\Lambda E^{-1}T^tA)\\
  &= (\Lambda + T E^{-t}\Lambda^t E^{t})^{\tau}(A),
\end{align*}
i.e.\
\begin{align*}
  (\Phi^{\mathcal B}_2)^{*} (\Lambda)= \Lambda + T E^{-t}\Lambda^t E^{t}.
\end{align*}
Now, $\Lambda\in  \ker (\Phi^{\mathcal B}_2)^{*}$ if and only if 
$$ \Lambda + T E^{-t}\Lambda^t E^{t}=0.$$
Note that from \eqref{eq_relations_in_BE}, it follows that $E^{-1}T^tET=I$ and, therefore, $E^{t}T^{-1}=T^tE^{t}$.
Using this relation, we see that the map $\Lambda\mapsto E\Lambda=:\Kappa$ induces an isomorphism between $\ker (\Phi^{\mathcal B}_2)^{*}$ and $\{\Kappa\in M_n:\Kappa^t=-T^tF\Kappa \}$ because
\begin{align*}
  \Lambda + T E^{-t}\Lambda^t E^{t}=0
  & \iff E^{-1}\Kappa  + T E^{-t} \Kappa ^{t}=0\\
  & \iff \Kappa ^{t}=-E^{t}T^{-1}E^{-1}\Kappa =-T^tE^{t}E^{-1}\Kappa =-T^tF\Kappa 
    .
\end{align*}
By the rank-nullity theorem,
\[\dim \ker (\Phi^{\mathcal B}_2)^{*} = d, \qquad \im (\Phi^{\mathcal H}_2)^{*} =  n^2-d.\]

Finally, for $(\Phi^{\mathcal B}_3)^{*} \colon M_n(\C) \to \C$ we find
\begin{align*}
    (\Lambda^{\tau} \circ \Phi^{\mathcal B}_3) (a) 
    	& = \Lambda^{\tau}\left(\bigl(Ex E^{-t}-E^tE^{-1}\bigr)a\right)
  \\ & = \tau\tr\left(\Lambda^t\bigl(ET E^{-t}-E^tE^{-1}\bigr)a\right)\\
        &=\tr\left(\Lambda^t\bigl(ET E^{-t}-E^tE^{-1}\bigr)\right)\tau(a),
   \end{align*}
so
\begin{align*}
    (\Phi^{\mathcal B}_3)^{*}
    (\Lambda) = \tr\left(\Lambda^t\bigl(ET E^{-t}-E^tE^{-1}\bigr)\right).
\end{align*}
Note that $ETE^{-t}=E^tE^{-1}$ if and only if $T=(E^{-1}E^t)^2$.
Using the rank-nullity theorem, we can thus conclude that
  \[\dim \ker  (\Phi^{\mathcal B}_3)^{*} =n^2-s,
\qquad 
\dim \im (\Phi^{\mathcal B}_3)^{*} =s.
\]
The result now follows from $\dim \mathrm H^k(\mathcal B, {_\e\C_\tau})=\dim\ker (\Phi^{\mathcal B}_{k+1})^* - \dim\im(\Phi^{\mathcal B}_k)^*$.
\end{proof}

\subsection{Projective resolution for free products of 
Hopf algebras}\label{freeprod}

The higher Hochschild cohomologies of the free product of Hopf algebras can be easily computed from the Hochschild cohomologies of the factors.
\begin{lemma}[{\cite[Theorem 5.2]{Bichon18}}] \label{lem_higher_Hn_of_free_product}
Let $A$ and $B$ be Hopf algebras. For any $A*B$-bimodule $M$ and for $n\geq 2$ we have 
$$\mathrm H^n(A*B,M)\cong \mathrm H^n(A,M)\oplus \mathrm H^n(B,M),$$
where the bimodules on the right-hand side are equipped with the restricted actions on $A$ and $B$.
\end{lemma}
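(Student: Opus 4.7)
The plan is to translate the statement into Ext-groups of the trivial module via \Cref{thm:hom-spaces} and then exploit the fact that $A*B$ is free as a right $A$-module and as a right $B$-module, together with a canonical short exact sequence of right $A*B$-modules built from the augmentation ideals $\bar A=\ker\varepsilon_A$ and $\bar B=\ker\varepsilon_B$. First I would pass to right modules: by \Cref{thm:hom-spaces}, $\mathrm H^n(A*B,M)\cong \operatorname{Ext}^n_{A*B}(\C_\e,M'')$, and restriction along the canonical Hopf algebra inclusions $A,B\hookrightarrow A*B$ is compatible with the passage $M\mapsto M''$ because the coproduct and antipode of $A*B$ restrict to those of $A$ and $B$. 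Hence it suffices to prove
\[\operatorname{Ext}^n_{A*B}(\C_\e,N)\;\cong\;\operatorname{Ext}^n_{A}(\C_\e,N|_A)\,\oplus\,\operatorname{Ext}^n_{B}(\C_\e,N|_B)\]
for every right $A*B$-module $N$ and every $n\geq2$.

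The key structural input is the decomposition, as right $A*B$-modules,
\[\overline{A*B}\;\cong\;(A*B)\otimes_A \bar A\,\oplus\,(A*B)\otimes_B \bar B,\]
which one verifies by writing out the standard vector-space basis of $A*B$ as alternating reduced words in $\bar A$ and $\bar B$ and checking that the obvious multiplication maps from the induced modules are bijective. Combined with the short exact sequence $0\to \overline{A*B}\to A*B\to \C_\e\to 0$ of right $A*B$-modules and the projectivity of the regular module $A*B$, the long exact sequence $\operatorname{Ext}_{A*B}(-,N)$ degenerates (for $n\geq2$) to the dimension shift
\[\operatorname{Ext}^n_{A*B}(\C_\e,N)\;\cong\;\operatorname{Ext}^{n-1}_{A*B}(\overline{A*B},N).\]

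From here the Eckmann--Shapiro lemma, which applies because $A*B$ is free both as a right $A$-module and as a right $B$-module via the same alternating-word decomposition, identifies $\operatorname{Ext}^{n-1}_{A*B}((A*B)\otimes_A \bar A,N)$ with $\operatorname{Ext}^{n-1}_{A}(\bar A,N|_A)$ and likewise on the $B$-side. One more shift through the short exact sequence $0\to \bar A\to A\to \C_\e\to 0$ promotes $\operatorname{Ext}^{n-1}_A(\bar A,N|_A)$ to $\operatorname{Ext}^n_A(\C_\e,N|_A)$ for $n\geq2$, and summing with the analogous computation for $B$ yields the claim. The main obstacle I foresee is the careful verification of the splitting $\overline{A*B}\cong (A*B)\otimes_A \bar A\oplus (A*B)\otimes_B \bar B$ as right $A*B$-modules; this is the bookkeeping step that crucially uses that we amalgamate over $\C$ rather than over a larger common subalgebra. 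The hypothesis $n\geq 2$ in the statement reflects precisely that this double dimension-shift argument loses control of the low-degree boundary terms, explaining why in degrees $0$ and $1$ one only expects a Mayer--Vietoris-type exact sequence rather than an outright direct-sum decomposition.
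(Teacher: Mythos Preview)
Your proof is correct in spirit and close to what the paper does, though the paper does not actually prove this lemma---it cites \cite[Theorem 5.2]{Bichon18} and instead proves the more refined \Cref{thm:res_for_free_product}, which constructs a full projective resolution of $\C_\e$ over $A*B$ by inducing given resolutions of $A$ and $B$. Both arguments hinge on the same key identity, namely the decomposition of the augmentation ideal
\[(A*B)^+ \;=\; A^+\cdot(A*B)\,\oplus\,B^+\cdot(A*B)\;\cong\;(A^+\otimes_A(A*B))\,\oplus\,(B^+\otimes_B(A*B))\]
as right $A*B$-modules (this is exactly the displayed line in the proof of \Cref{thm:res_for_free_product}). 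From there the paper continues by building the entire resolution and observing that for $k\geq2$ the boundary maps are block-diagonal, so the hom-complex splits and, via the adjunction $\hom_{A*B}(P\otimes_A(A*B),N)\cong\hom_A(P,N|_A)$, computes the two summands. You instead use one dimension shift, Eckmann--Shapiro, and a second dimension shift, which is a clean shortcut that avoids choosing resolutions for $A$ and $B$ at all; the trade-off is that the paper's route yields an explicit resolution of $A*B$ that it needs anyway for the later computations.

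One cosmetic point: your tensor products are written on the wrong side. With the paper's convention of right modules, the induced module is $\bar A\otimes_A(A*B)$, not $(A*B)\otimes_A\bar A$; the latter carries a natural \emph{left} $A*B$-action. Correspondingly, Eckmann--Shapiro here requires $A*B$ to be free as a \emph{left} $A$-module (which it is, by the same alternating-word argument). This is purely notational and does not affect the substance of the argument.
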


In our considerations we will need the following more detailed result. 
\begin{theorem}\label{thm:res_for_free_product}
Let $A=B*C$ be a free product of Hopf algebras $B$ and $C$. Furthermore, suppose that 
\begin{align*}
    \cdots\xrightarrow{\Phi_3}P_2\xrightarrow{\Phi_2} P_1\xrightarrow{\Phi_1} B \xrightarrow{\varepsilon_B}\mathbb C_{\varepsilon_B}\rightarrow 0\\
    \cdots\xrightarrow{\Psi_3}Q_2\xrightarrow{\Psi_2} Q_1\xrightarrow{\Psi_1} C \xrightarrow{\varepsilon_B}\mathbb C_{\varepsilon_B}\rightarrow 0
\end{align*}
are projective resolutions for $B$ and $C$, respectively.  
Put $\widetilde P_k:=P_k\otimes_B A$, $\widetilde Q_k:= Q_k\otimes_C A$, $\widetilde \Phi_k:=\Phi_k\otimes_B{\mathrm{id}_A}$, $\widetilde\Psi_k:=\Psi_k\otimes_{C}\mathrm{id}_A$. Then 
\[\cdots\xrightarrow{\widetilde \Phi_3\oplus \widetilde \Psi_3}\widetilde P_2\oplus \widetilde Q_2\xrightarrow{\widetilde \Phi_2\oplus \widetilde \Psi_2} \widetilde P_1\oplus \widetilde Q_1\xrightarrow{\widetilde \Phi_1 + \widetilde \Psi_1} A \xrightarrow{\varepsilon_A}\mathbb C_{\varepsilon_A}\rightarrow 0\]
is a projective resolution for $A$. 
\end{theorem}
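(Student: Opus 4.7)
The plan is to use the classical structural fact that, for a free product $A=B*C$ of unital algebras, $A$ is free as a left $B$-module and as a left $C$-module. This follows from the normal form for free products: every element of $A$ is uniquely a linear combination of $1$ and of alternating products $a_1\cdots a_n$ with $a_j$ ranging over bases of $I_B:=\ker\varepsilon_B$ or $I_C:=\ker\varepsilon_C$ and consecutive factors from different algebras. Collecting all such basis vectors according to their leftmost factor and right-multiplying by it exhibits $A$ as a free left $B$-module; symmetrically for $C$. Consequently, the functor $-\otimes_B A$ is exact (as $A$ is a fortiori flat) and preserves projectivity: if $P\oplus N=\bigoplus B$, then $(P\otimes_B A)\oplus(N\otimes_B A)=\bigoplus A$. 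The same holds for $-\otimes_C A$.

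The central algebraic input for combining the two tensored-up resolutions will be the decomposition
\[I_A\;=\;I_B\cdot A\;\oplus\;I_C\cdot A\]
of the augmentation ideal of $A$ as a direct sum of right $A$-modules (in fact of two-sided ideals). This again follows from the alternating normal form: every element of $I_A$ is a linear combination of non-trivial alternating products, and such a product lies in $I_B\cdot A$ precisely when its leftmost factor is in $I_B$, and in $I_C\cdot A$ precisely when its leftmost factor is in $I_C$, so the two submodules together span $I_A$ with intersection zero.

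With these ingredients, the proof proceeds as follows. First, truncate the given resolutions and regard them as projective resolutions $\cdots\to P_1\to I_B\to 0$ and $\cdots\to Q_1\to I_C\to 0$ of the augmentation ideals over $B$ and $C$, respectively. Apply the exact and projectivity-preserving functors $-\otimes_B A$ and $-\otimes_C A$, and use the isomorphisms $I_B\otimes_B A\cong I_B\cdot A$ and $I_C\otimes_C A\cong I_C\cdot A$ (induced by multiplication and injective because of flatness) to obtain projective resolutions of $I_B\cdot A$ and $I_C\cdot A$ over $A$. Take their direct sum to get a projective resolution of $I_B\cdot A\oplus I_C\cdot A=I_A$, and splice in the short exact sequence $0\to I_A\to A\to\mathbb{C}_{\varepsilon_A}\to 0$. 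This produces the asserted projective resolution of $\mathbb{C}_{\varepsilon_A}$, with differentials $\widetilde\Phi_1+\widetilde\Psi_1$ in position $1$ and $\widetilde\Phi_k\oplus\widetilde\Psi_k$ for $k\geq 2$.

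The main obstacle is the decomposition lemma $I_A=I_B\cdot A\oplus I_C\cdot A$; once it is in hand, all exactness claims reduce to formal homological algebra. In particular, exactness at $\widetilde P_1\oplus\widetilde Q_1$ follows because $\widetilde\Phi_1$ and $\widetilde\Psi_1$ land in complementary direct summands of $I_A$, forcing $\ker(\widetilde\Phi_1+\widetilde\Psi_1)=\ker\widetilde\Phi_1\oplus\ker\widetilde\Psi_1=\operatorname{im}\widetilde\Phi_2\oplus\operatorname{im}\widetilde\Psi_2$, while exactness at higher positions is simply the direct sum of the exactness of the two tensored-up complexes.
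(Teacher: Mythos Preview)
Your proof is correct and follows essentially the same approach as the paper: both rely on the freeness (hence flatness) of $A$ over $B$ and over $C$, and on the key decomposition $\ker\varepsilon_A = (\ker\varepsilon_B)A \oplus (\ker\varepsilon_C)A$ of the augmentation ideal. Your truncate--tensor--splice framing is a slightly more conceptual packaging of what the paper does by directly checking exactness at each spot, but the ingredients are identical. One small slip: the parenthetical ``in fact of two-sided ideals'' is not correct---the summands $I_B\cdot A$ and $I_C\cdot A$ are right ideals but not two-sided (e.g.\ for $c\in I_C$ and $b\in I_B$, the element $cb$ starts with a $C$-factor in the normal form and hence lies in $I_C\cdot A$, not $I_B\cdot A$); this has no bearing on the argument since only the right $A$-module structure is used.
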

\begin{proof}
For the codomains of $\widetilde \Phi_1$ and $\widetilde \Psi_1$ we have $B\otimes_B A=A= C\otimes_C A$. 
It is easy to see that the modules are projective (they can be complemented to free modules using the fact that the modules in the original resolutions are projective). 
Note that the free product $A=B\free C$, as a module over the algebras $B$ and $C$, respectively, is free, hence flat, which means in our context that the functors $\cdot\otimes_B A$ and $\cdot \otimes_C A$ are exact. 
Therefore, 
\[\operatorname{ker}(\widetilde \Phi_k\oplus \widetilde{\Psi}_k)
=\operatorname{ker}(\widetilde \Phi_k)\oplus \operatorname{ker}(\widetilde{\Psi}_k)
=\operatorname{im}(\widetilde \Phi_{k+1})\oplus \operatorname{im}(\widetilde{\Psi}_{k+1})
=\operatorname{im}(\widetilde \Phi_{k+1}\oplus \widetilde{\Psi}_{k+1})\]
proves exactness at $\widetilde P_k\oplus \widetilde Q_k$ with $k>1$. Under the natural identifications, we have (using the shorthand notation $A^+:=\operatorname{ker}\varepsilon_A$ and analogously for $B$ and $C$)
\[A^+= B^+ A \oplus C^+ A= (B^+\otimes_B A) \oplus (C^+ \otimes_C A) = \operatorname{im}(\widetilde \Phi_1)\oplus \operatorname{im}(\widetilde{\Psi}_1)=\operatorname{im}(\widetilde \Phi_1\oplus \widetilde{\Psi}_1).\]
This directly shows exactness at $A$, but also allows us to conclude 
\[\operatorname{ker}(\widetilde \Phi_1+\widetilde{\Psi}_1)=\operatorname{ker}(\widetilde \Phi_1\oplus\widetilde{\Psi}_1)= \operatorname{im}(\widetilde \Phi_2\oplus\widetilde{\Psi}_2).\qedhere\]
\end{proof}

This result allows to compute the Hochschild cohomologies of the free product of Hopf algebras by passing to the hom-spaces as described in Theorem \ref{thm:hom-spaces}. 

\subsection{Yetter-Drinfeld modules and Gerstenhaber-Schack 
cohomology (generalities)}\label{subsec:YD-generalities}

A (right-right) {\em Yetter--Drinfeld module} over a Hopf algebra $H$ is a right $H$-comodule $X$ which is also a right $H$-module satisfying
\[
(xh)_{(0)}\otimes (xh)_{(1)}=x_{(0)}h_{(2)}\otimes S(h_{(1)})x_{(1)}h_{(3)},\qquad x\in X,\quad h\in H.
\]
In other words, a Yetter-Drinfeld module is an $H$-module $H$-comodule such that the coaction is a module map for the $H$-action \begin{align}
    (x\otimes h)\YDaction h':=xh'_{(2)}\otimes S(h'_{(1)})h h'_{(3)}
\end{align}
on $X\otimes H$. 
Morphisms between Yetter-Drinfeld modules over $H$ are the $H$-linear $H$-colinear maps. We denote the category thus obtained by $\mathcal{YD}^H_H$. The coaction is often denoted by $\gamma(x):=x_{(0)}\otimes x_{(1)}$.

Before we continue to discuss the category of Yetter-Drinfeld modules, we record for future reference a simple lemma, which is a slight generalization of the forward implication of \cite[Proposition 4.5]{Bic16}.

\begin{lemma}\label{lem:coinvariant}
  Let $H$ be a Hopf subalgebra of $A$ with
  \begin{align}\label{eq:assumption_lem_coinvariant}
a_{(2)}\otimes S(a_{(1)})Ha_{(3)}\subset A\otimes H
  \end{align}
  for all $a\in A$, i.e.\ $A\otimes H$ is invariant for the $\YDaction$-action of $A$ on $A\otimes A$ (or in the terminology of \cite[Definition 4.6]{Bic16}: $H\subset A$ is \emph{adjoint}).
  Assume further that $X$ is a Yetter-Drinfeld $A$-module with a subcomodule $C\subset X$ which generates $X$ as an $A$-module, i.e.\ $X=CA$. Denote $\gamma\colon X\to X\otimes A$ the $A$-coaction on $X$. Then $\gamma(X)\subset X\otimes H$ if and only if  $\gamma(C)\subset C\otimes H$.
\end{lemma}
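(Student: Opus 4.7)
The plan is to argue the two implications separately. The forward direction is essentially formal; the backward direction is where the Yetter--Drinfeld axiom together with the adjointness hypothesis \eqref{eq:assumption_lem_coinvariant} actually do the work, and it is also the only substantive step.

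For the forward direction, assume $\gamma(X)\subset X\otimes H$. Since $C$ is a subcomodule of $X$, one already has $\gamma(C)\subset C\otimes A$; intersecting the two containments inside $X\otimes A$ and using the elementary linear-algebra identity
\[
(X\otimes H)\cap(C\otimes A)=C\otimes H,
\]
which is obtained by extending a basis of $C$ to a basis of $X$ and reading off the resulting decomposition, yields $\gamma(C)\subset C\otimes H$.

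For the backward direction, assume $\gamma(C)\subset C\otimes H$. Since $X=CA$ and $\gamma$ is linear, it suffices to show that $\gamma(ca)\in X\otimes H$ for every $c\in C$ and $a\in A$. I would substitute the Yetter--Drinfeld compatibility, which rewrites
\[
\gamma(ca)=c_{(0)}a_{(2)}\otimes S(a_{(1)})c_{(1)}a_{(3)}.
\]
By hypothesis $c_{(1)}\in H$, and the adjointness assumption \eqref{eq:assumption_lem_coinvariant} is precisely the statement that for every $h\in H$ the element $a_{(2)}\otimes S(a_{(1)})ha_{(3)}$ lies in $A\otimes H$. Applying this with $h=c_{(1)}$ and then tensoring on the left with $c_{(0)}\in X$ places $\gamma(ca)$ inside $X\otimes H$, as required. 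The main (and essentially only) obstacle is recognizing that \eqref{eq:assumption_lem_coinvariant} is exactly the right assumption to control the mixed term $S(a_{(1)})c_{(1)}a_{(3)}$ produced by the Yetter--Drinfeld swap; without it, even knowing $c_{(1)}\in H$ would not be enough to keep the right tensor factor of $\gamma(ca)$ inside $H$.
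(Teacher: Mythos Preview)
Your proof is correct and follows essentially the same approach as the paper: the forward direction is dismissed as trivial, and the backward direction is handled via the Yetter--Drinfeld identity $\gamma(ca)=c_{(0)}a_{(2)}\otimes S(a_{(1)})c_{(1)}a_{(3)}$ together with the adjointness hypothesis~\eqref{eq:assumption_lem_coinvariant}. Your treatment of the forward direction is slightly more explicit than the paper's (which merely says ``trivial''), but the substance is identical.
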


\begin{proof}
  The ``only if'' part is trivial, the ``if'' part easily follows from
  $\gamma(ca)=\gamma(c)\YDaction a=c_{(0)}a_{(2)}\otimes S(a_{(1)})c_{(1)}a_{(3)}$.
\end{proof}

For the purposes of cohomology computations, we need to identify the projective objects in the category $\mathcal{YD}^H_H$. To this end, recall that given a right $H$-comodule $C$ one can construct a Yetter-Drinfeld module $C\boxtimes H$, referred to as \emph{the free Yetter-Drinfeld module over $C$}, as follows. As a right module it is just $C\otimes H$ (the free $H$-module over $C$) and the right coaction is given by
\[
\delta_\boxtimes(c\boxtimes h):=c_{(0)}\boxtimes h_{(2)}\otimes S(h_{(1)})c_{(1)}h_{(3)},
\]
the unique extension of the coaction on $C=C\boxtimes 1\subset C\boxtimes H$ to a module map. 
It is easy to check that the free Yetter-Drinfeld module over an $H$-comodule $C$ enjoys the following universal property: for every comodule map $f\colon C\to X$ to a Yetter-Drinfeld module $X\in\mathcal{YD}_H^H$, there exists a unique extension to a morphism of Yetter-Drinfeld modules $\widetilde f\colon C\boxtimes H\to X$, namely $\widetilde f(c\boxtimes h):=f(c)h$. 

A Yetter-Drinfeld $H$-module is called {\em free} if it is isomorphic to the free Yetter-Drinfeld module $C\boxtimes H$ over some $H$-comodule $C$. Note that a free Yetter-Drinfeld module is in particular a free $H$-module.

To simplify the notation, if $H$ is a Hopf algebra, we denote the free Yetter-Drinfeld module over $\mathbb C$, $\mathbb C\boxtimes H$, simply by $H$; it is often called \emph{right coadjoint Yetter-Drinfeld module} and denoted by $H_{\mathrm{coad}}$. 
Subsequently, we call a Yetter-Drinfeld module {\em relatively projective} if it is a direct summand of a free Yetter--Drinfeld module. One can prove that, for a cosemisimple Hopf algebra $H$, the projective objects in $\mathcal{YD}^H_H$ are exactly the relatively projective Yetter-Drinfeld modules~\cite[Proposition~4.2]{Bic16}.

Note that $\mathbb C_\varepsilon$ is a (typically not projective) Yetter-Drinfeld module with coaction $1\mapsto 1\otimes 1\in \mathbb C\otimes H$, indeed, 
\[1\otimes 1\YDaction h = 1\varepsilon(h_{(2)})\otimes S(h_{(1)})1h_{(3)}=\varepsilon(h)1\otimes 1.\] 
Therefore, it makes sense to search for projective resolutions of the counit by projective Yetter-Drinfeld modules.

Let us now discuss the \emph{Gerstenhaber-Schack cohomology}~\cite{G-Sch90}. Let $H$ be a Hopf algebra and let $X$ be a Yetter-Drinfeld module over $H$. The Gerstenhaber-Schack cohomology of $H$ with coefficients in $X$ is denoted by $\mathrm{H_{GS}^*}(H,X)$. In this paper, we use the following result of Bichon~\cite[Proposition~5.2]{Bic16} as our definition of GS cohomology (we state it for cosemisimple Hopf algebras but it works for more general co-Frobenius Hopf algebras).
\begin{proposition}
Let $H$ be a cosemisimple Hopf algebra and let
\[
P_*:=\ldots\to P_{n+1}\to P_n\to\ldots\to P_1\to \C_\varepsilon\to 0
\]
be a projective resolution of the counit in $\mathcal{YD}^H_H$. For any Yetter-Drinfeld $H$-module $X$, there is an isomorphism
\[
\mathrm{H^*_{GS}}(H,X)\cong \mathrm H^*\left(\operatorname{hom}_{\mathcal{YD}^H_H}(P_*,X)\right).
\]
\end{proposition}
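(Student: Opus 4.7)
The plan is to realize $\mathrm{H^*_{GS}}(H,X)$ as a derived functor on the abelian category $\mathcal{YD}^H_H$ and then invoke the standard comparison theorem for projective resolutions. Since $H$ is cosemisimple, the cited result of Bichon guarantees enough (relatively) projectives in $\mathcal{YD}^H_H$, so the left-exact functor $\hom_{\mathcal{YD}^H_H}(\C_\varepsilon,-)$ admits right derived functors $\mathrm{Ext}^*_{\mathcal{YD}^H_H}(\C_\varepsilon,-)$. The content of the proposition then reduces to an identification $\mathrm{H^*_{GS}}(H,X)\cong\mathrm{Ext}^*_{\mathcal{YD}^H_H}(\C_\varepsilon,X)$ together with a routine invocation of general homological algebra.

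To make this precise, I would first construct an explicit ``bar-type'' free resolution of $\C_\varepsilon$ in $\mathcal{YD}^H_H$ whose terms have the form $H^{\otimes n}\boxtimes H$, i.e.\ free Yetter-Drinfeld modules over the $H$-comodules $H^{\otimes n}$, equipped with a Hochschild-style differential. Exactness can be proved by exhibiting an explicit contracting homotopy as in the classical bar resolution, transported through the free YD construction. The universal property of $\boxtimes$ provides the natural adjunction
\[
\hom_{\mathcal{YD}^H_H}(C\boxtimes H,\,X)\;\cong\;\hom^H(C,X),
\]
so applying $\hom_{\mathcal{YD}^H_H}(-,X)$ to this bar resolution yields a complex isomorphic to $\hom^H(H^{\otimes *},X)$. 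A direct inspection should then match this complex with the totalization of the standard Gerstenhaber-Schack bicomplex of $H$ with coefficients in $X$, recovering $\mathrm{H^*_{GS}}(H,X)$ on cohomology.

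Once this identification is in place, the proposition follows from the standard comparison theorem of homological algebra: any two projective resolutions of $\C_\varepsilon$ in $\mathcal{YD}^H_H$ are chain-homotopy equivalent, so applying the additive functor $\hom_{\mathcal{YD}^H_H}(-,X)$ produces quasi-isomorphic cochain complexes. Consequently $\mathrm H^*\bigl(\hom_{\mathcal{YD}^H_H}(P_*,X)\bigr)$ is independent of the chosen projective resolution $P_*$ and agrees with the cohomology of the distinguished bar-type resolution, which by the previous step is precisely $\mathrm{H^*_{GS}}(H,X)$.

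The step I expect to be the main obstacle is the explicit identification of the image of the free YD bar resolution under $\hom_{\mathcal{YD}^H_H}(-,X)$ with the Gerstenhaber-Schack bicomplex. The original GS complex mixes a Hochschild direction (cochains $H^{\otimes p}\to X$ dualising the product) with a coalgebra direction (dualising the coproduct), and one has to verify that the single differential inherited from the YD bar resolution, once passed through the freeness adjunction, decomposes as the totalization of these two directions with the correct signs and the Yetter-Drinfeld compatibility correctly encoding the $H$-action on $X$. After this bookkeeping is settled, the remainder is formal derived-functor nonsense and the statement drops out.
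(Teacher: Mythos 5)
First, note that the paper does not actually prove this proposition: it is quoted from Bichon \cite[Proposition~5.2]{Bic16} and is adopted as the \emph{definition} of Gerstenhaber--Schack cohomology used throughout, so there is no internal proof to compare against. Your derived-functor framing and the final comparison-theorem step are sound as far as they go: once one knows (i) that $\mathcal{YD}^H_H$ has enough projectives (for cosemisimple $H$ this follows from the identification of the projective objects with the relatively projective, i.e.\ direct summands of free, Yetter--Drinfeld modules, \cite[Proposition~4.2]{Bic16}) and (ii) that $\mathrm{H}^*_{\mathrm{GS}}(H,X)\cong\operatorname{Ext}^*_{\mathcal{YD}^H_H}(\C_\varepsilon,X)$, the independence of the choice of $P_*$ is standard homological algebra.

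The genuine gap is step (ii), which you relegate to ``direct inspection'' and ``bookkeeping''. That identification is the entire substantive content of the proposition, and the mechanism you describe cannot work as stated: applying $\hom_{\mathcal{YD}^H_H}(-,X)$ to a bar-type resolution with terms $H^{\otimes n}\boxtimes H$ and using the freeness adjunction produces a complex with cochain groups $\hom^H(H^{\otimes n},X)$, whereas the Gerstenhaber--Schack total complex in degree $n$ is a direct sum $\bigoplus_{p+q=n}$ of bigraded pieces mixing a Hochschild (algebra) direction with a Cartier (coalgebra) direction. These are different graded vector spaces, so the single differential of your resolution cannot literally ``decompose as the totalization'' of the two GS differentials; at best the two complexes are quasi-isomorphic, and proving that is the theorem. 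The actual argument (due to Taillefer, and invoked by Bichon) either identifies $\mathrm{H}^*_{\mathrm{GS}}$ with $\operatorname{Ext}^*$ in the category of Hopf bimodules and transports this through the categorical equivalence between Hopf bimodules and Yetter--Drinfeld modules, or collapses the Cartier direction of the GS bicomplex using the fact that over a cosemisimple (more generally co-Frobenius) Hopf algebra that direction is acyclic, leaving only an edge complex of $H$-colinear cochains. In either form the hypothesis on $H$ enters essentially at exactly this point, which your sketch never uses in the identification step --- and without some such hypothesis the statement is false in general. So your outline follows the right overall strategy, but the part you flag as the ``main obstacle'' is not bookkeeping; it is the result being proved.
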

\noindent We refer the reader to~\cite[Section~1]{G-Sch90} for a definition of $\mathrm{H_{GS}^*}(H,X)$ via an explicit complex. The {\em bialgebra cohomology} is defined as $\mathrm{H_b^*}(H):=\mathrm{H_{GS}^*}(H,\C_\e)$.

Let us now discuss the restriction and the induction functors. For the remainder of the section, let $H$ be a Hopf subalgebra of a Hopf algebra $A$. The {\em restriction functor} is defined as follows
\[
\operatorname{_{\mathnormal H}\mathrm{res}_\mathnormal{A}}\colon\mathcal{YD}^A_A\longrightarrow \mathcal{YD}^H_H,\qquad X\longmapsto X^{(H)},
\]
where $X^{(H)}:=\{x\in X~:~x_{(0)}\otimes x_{(1)}\in X\otimes H\}$. One can show that the above functor is isomorphic to the functor $(-)\mathbin{\Box}_A H$ (recall that $\mathbin{\Box}_A$ denotes the cotensor product over $A$); the isomorphism is given by $X^{(H)}\ni x\mapsto x_{(0)}\otimes x_{(1)}\in X\mathbin{\Box}_A H$. Hence, the restriction functor is always left exact and we say that $H$ is {\em coflat} over $A$ when it is exact. Next, we define the {\em induction functor} to be
\[
\operatorname{_{\mathnormal A}\mathrm{ind}_\mathnormal{H}}\colon\mathcal{YD}^H_H\longrightarrow \mathcal{YD}^A_A,\qquad X\longmapsto X\otimes_H A.
\]
The Yetter-Drinfeld $A$-structure on $X\otimes_H A$ is given as follows: the $A$-module action is just the multiplication on the right and the $A$-comodule structure is defined by
\[
x\otimes_A h\longmapsto x_{(0)}\otimes_A h_{(2)}\otimes S(h_{(1)})x_{(1)}h_{(3)}.
\]
It is clear that the above functor is always right exact and we say that $A$ is flat over $H$ if it is exact. 

It is worth noting that the functors $(\operatorname{_{\mathnormal H}\mathrm{res}_\mathnormal{A}},
\operatorname{_{\mathnormal A}\mathrm{ind}_\mathnormal{H}})$ 
form a pair of adjoint functors (this is a special case of \cite[Section~2.5 Theorem~15]{CMZ02}; in Section 5.3 Proposition 123 therein, the case of left-right Yetter-Drinfeld modules is discussed, the theorem applies to right-right Yetter-Drinfeld modules in an analogous way).

If $A$ is cosemisimple, then so is $H$ and
$A$ is flat over $H$~\cite{Chirvasitu14}, so the induction functor is exact. Hence, in the cosemisimple case, a general categorical result \cite[Problem 4.16 and its dual statement]{Pareigis1970} implies that $\operatorname{_{\mathnormal A}\mathrm{ind}_\mathnormal{H}}$ preserves projective objects. This last observation is crucial in the next theorem (although in the special cases we treat later, projectivity of the involved modules will be easy to check by hand).

\begin{theorem}[Projective Yetter-Drinfeld resolution for free products]\label{thm:YD-res_free-product}
Let $A=B*C$ be a free product of Hopf algebras $B$ and $C$. Furthermore, suppose that 
\begin{align*}
    \cdots\xrightarrow{\Phi_3}X_2\xrightarrow{\Phi_2} X_1\xrightarrow{\Phi_1} B \xrightarrow{\varepsilon_B}\mathbb C_{\varepsilon_B}\rightarrow 0\\
    \cdots\xrightarrow{\Psi_3}Y_2\xrightarrow{\Psi_2} Y_1\xrightarrow{\Psi_1} C \xrightarrow{\varepsilon_C}\mathbb C_{\varepsilon_C}\rightarrow 0
\end{align*}
are resolutions of $\mathbb{C}_{\varepsilon_B}$ and $\mathbb{C}_{\varepsilon_C}$ by relatively projective Yetter--Drinfeld modules, respectively. Put $\widetilde X_k:=X_k\otimes_B A$, $\widetilde Y_k:= Y_k\otimes_C A$, $\widetilde \Phi_k:=\Phi_k\otimes_B{\mathrm{id}_A}$, and $\widetilde\Psi_k:=\Psi_k\otimes_{C}\mathrm{id}_A$. Then, for the codomains of $\widetilde \Phi_1$ and $\widetilde \Psi_1$, we have $B\otimes_B A=A= C\otimes_C A$ (as Yetter-Drinfeld modules!) and
\[\cdots\xrightarrow{\widetilde \Phi_3\oplus \widetilde \Psi_3}\widetilde X_2\oplus \widetilde Y_2\xrightarrow{\widetilde \Phi_2\oplus \widetilde \Psi_2} \widetilde X_1\oplus \widetilde Y_1\xrightarrow{\widetilde \Phi_1 + \widetilde \Psi_1} A \xrightarrow{\varepsilon_A}\mathbb C_{\varepsilon_A}\rightarrow 0\]
is a resolution for $A$ by projective objects in $\mathcal{YD}^A_A$. 
\end{theorem}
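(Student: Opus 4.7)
The plan is to mimic the proof of \Cref{thm:res_for_free_product}, upgrading every step from the category of right modules to the category $\mathcal{YD}_A^A$. Beyond the algebra-level argument, three ingredients are required: (i) the induction functor $(-)\otimes_B A\colon\mathcal{YD}_B^B\to\mathcal{YD}_A^A$ (and its $C$-analogue) is exact; (ii) it sends relatively projective Yetter-Drinfeld modules to projective objects in $\mathcal{YD}_A^A$; and (iii) $B\otimes_B A$ and $C\otimes_C A$ identify with $A$ not merely as right $A$-modules, but carry the right coadjoint Yetter-Drinfeld structure on $A$.

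For (i), I would invoke cosemisimplicity of $A$, which descends to $B$ and $C$: by Chirvasitu's theorem already quoted, $A$ is flat over $B$ and over $C$, so the tensor product is exact on underlying right modules, and exactness of a sequence of Yetter-Drinfeld maps is a statement about the underlying vector spaces. For (ii), the cleanest route is via the adjunction $(\operatorname{_{\mathnormal H}\mathrm{res}_\mathnormal{A}},\operatorname{_{\mathnormal A}\mathrm{ind}_\mathnormal{H}})$ from the preliminaries: since restriction is exact in the cosemisimple case (coflatness of $H$ in $A$), its right adjoint $\operatorname{_{\mathnormal A}\mathrm{ind}_\mathnormal{H}}$ preserves projectives. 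Alternatively, a direct computation should establish that for any right $H$-comodule $V$ one has $(V\boxtimes H)\otimes_H A\cong V\boxtimes A$ in $\mathcal{YD}_A^A$, with both sides carrying the coaction $v\otimes a\mapsto v_{(0)}\otimes a_{(2)}\otimes S(a_{(1)})v_{(1)}a_{(3)}$; free Yetter-Drinfeld modules thus induce to free Yetter-Drinfeld modules, and relative projectivity is preserved because every functor sends direct summands to direct summands. Specialization to $V=\mathbb C$ produces ingredient (iii), namely $B\otimes_B A\cong \mathbb C\boxtimes A=A\cong C\otimes_C A$, all in $\mathcal{YD}_A^A$.

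Once these three ingredients are in hand, the remainder of the argument is a verbatim transcription of the proof of \Cref{thm:res_for_free_product} into the Yetter-Drinfeld setting. Exactness of induction yields exactness of each induced complex at $\widetilde X_k$ and $\widetilde Y_k$ for $k\geq 1$, hence exactness of the direct-sum complex at $\widetilde X_k\oplus\widetilde Y_k$ for $k\geq 2$. At position $A$, one uses the decomposition $A^+=B^+A\oplus C^+A$ of the augmentation ideal of the free product to conclude both $\operatorname{im}(\widetilde\Phi_1+\widetilde\Psi_1)=A^+=\ker\varepsilon_A$ and $\ker(\widetilde\Phi_1+\widetilde\Psi_1)=\ker\widetilde\Phi_1\oplus\ker\widetilde\Psi_1=\operatorname{im}(\widetilde\Phi_2\oplus\widetilde\Psi_2)$, closing the complex at both remaining positions.

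The main obstacle I expect is the careful bookkeeping of the Yetter-Drinfeld structure under induction, in particular verifying the isomorphism $(V\boxtimes H)\otimes_H A\cong V\boxtimes A$; the Hopf subalgebra hypothesis on $H\subset A$ is essential here, since only then does the expression $S(a_{(1)})v_{(1)}a_{(3)}$ land in the expected tensor factor, so that the two a priori different coactions coincide. Everything else, including compatibility of $\widetilde\Phi_k,\widetilde\Psi_k$ with the coactions, is automatic because $\Phi_k,\Psi_k$ are morphisms of Yetter-Drinfeld modules and induction is functorial.
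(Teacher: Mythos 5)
Your proposal follows the paper's proof of this theorem essentially step for step: induct both resolutions, observe that exactness of the induced complexes is a vector-space-level statement, use the decomposition $A^+=B^+A\oplus C^+A$ at the bottom of the complex, and control projectivity via the identification of induced free Yetter--Drinfeld modules --- the isomorphism $(V\boxtimes H)\otimes_H A\cong V\boxtimes A$ you propose to verify directly is exactly the paper's \Cref{prop:ind-free-YD}. Two points should be corrected, though. First, the theorem carries no cosemisimplicity hypothesis, so you cannot obtain flatness of $A$ over $B$ and $C$ from Chirvasitu's theorem \cite{Chirvasitu14}; the paper instead uses (as in \Cref{thm:res_for_free_product}) the elementary fact that the free product $A=B\free C$ is a \emph{free}, hence flat, module over each factor, which needs no assumption on $A$. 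Second, the handedness in your adjunction argument is reversed: the induction functor is the \emph{left} adjoint of the restriction functor, and the relevant categorical principle is that a left adjoint whose right adjoint is exact preserves projectives; the statement that a right adjoint of an exact functor preserves projectives is false in general (it preserves injectives). Your conclusion survives because induction really is the left adjoint and restriction really is exact in the cosemisimple case, and in any case your alternative direct route through free Yetter--Drinfeld modules --- free induces to free, so relatively projective induces to relatively projective --- is the argument the paper actually relies on and is the one I would keep.
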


\begin{proof}
  It is straightforward that the maps $\widetilde{\Phi}_k$ and $\widetilde{\Psi}_k$ are $A$-comodule maps. As in the proof of \Cref{thm:res_for_free_product}, since $A=B\free C$ is flat over $B$ and $C$, respectively, 
  the induction functors $\cdot\otimes_B A$ and $\cdot \otimes_C A$ are exact. Exactness holds in both, the category of right modules and the category of Yetter-Drinfeld modules, since the induction functors act identically. Therefore, 
  \[\operatorname{ker}(\widetilde \Phi_k\oplus \widetilde{\Psi}_k)
    =\operatorname{ker}(\widetilde \Phi_k)\oplus \operatorname{ker}(\widetilde{\Psi}_k)
    =\operatorname{im}(\widetilde \Phi_{k+1})\oplus \operatorname{im}(\widetilde{\Psi}_{k+1})
    =\operatorname{im}(\widetilde \Phi_{k+1}\oplus \widetilde{\Psi}_{k+1})\]
  proves exactness at $\widetilde P_k\oplus \widetilde Q_k$ with $k>1$. Under the natural identifications, we have for $A^+:=\operatorname{ker}\varepsilon_A$
  \[A^+= B^+ A \oplus C^+ A= (B^+\otimes_B A) \oplus (C^+ \otimes_C A) = \operatorname{im}(\widetilde \Phi_1)\oplus \operatorname{im}(\widetilde{\Psi}_1)=\operatorname{im}(\widetilde \Phi_1\oplus \widetilde{\Psi}_1).\]
  Finally, the induction functors preserve projective objects, so we are done.
\end{proof}

Let us close this section by investigating what the induction functor does to free Yetter-Drinfeld modules.

\begin{proposition}\label{prop:ind-free-YD}
  Let $H\subset A$ be a Hopf subalgebra and $C\boxtimes H$ be a free Yetter-Drinfeld module. Then
  \[(C\boxtimes H)\otimes_H A=C\boxtimes A,\]
  i.e.\ the canonical module isomorphism is a Yetter-Drinfeld isomorphism. 
\end{proposition}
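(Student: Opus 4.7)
My plan is to establish the stated isomorphism by verifying that the standard canonical module isomorphism $\varphi\colon (C\otimes H)\otimes_H A \to C\otimes A$, $(c\otimes h)\otimes_H a\mapsto c\otimes ha$, respects the Yetter-Drinfeld structures on both sides. Since the compatibility of $A$-module structures is immediate (both sides carry the multiplication on the right-most tensor factor, and $\varphi$ intertwines it), the whole content of the proposition is to show that $\varphi$ is an $A$-comodule map.

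To carry this out, I would first write out the $A$-coaction on the induced module $(C\boxtimes H)\otimes_H A$ using the formula recalled just before the proposition. For a generator $(c\boxtimes h)\otimes_H a$, using that the $H$-coaction on $C\boxtimes H$ sends $c\boxtimes h$ to $c_{(0)}\boxtimes h_{(2)}\otimes S(h_{(1)})c_{(1)}h_{(3)}$, the induced $A$-coaction yields
\[
(c\boxtimes h)\otimes_H a\;\longmapsto\; (c_{(0)}\boxtimes h_{(2)})\otimes_H a_{(2)}\otimes S(a_{(1)})S(h_{(1)})c_{(1)}h_{(3)}a_{(3)}.
\]
Applying $\varphi\otimes\mathrm{id}_A$ to the right-hand side gives
\[
c_{(0)}\otimes h_{(2)}a_{(2)}\otimes S(h_{(1)}a_{(1)})c_{(1)}h_{(3)}a_{(3)},
\]
using the anti-multiplicativity of $S$.

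Then I would compute the coaction on $C\boxtimes A$ directly from the defining formula $\delta_\boxtimes(c\boxtimes b)=c_{(0)}\boxtimes b_{(2)}\otimes S(b_{(1)})c_{(1)}b_{(3)}$ applied to $\varphi\bigl((c\boxtimes h)\otimes_H a\bigr)=c\boxtimes ha$. Expanding $\Delta(ha)=h_{(1)}a_{(1)}\otimes h_{(2)}a_{(2)}$ (and the iterated coproduct accordingly), we obtain
\[
\delta_\boxtimes(c\boxtimes ha)=c_{(0)}\boxtimes h_{(2)}a_{(2)}\otimes S(h_{(1)}a_{(1)})c_{(1)}h_{(3)}a_{(3)},
\]
which matches the previous expression on the nose. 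This confirms that $\varphi$ is a comodule map, hence an isomorphism in $\mathcal{YD}_A^A$.

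There is no real obstacle here; the proof is essentially a bookkeeping check in Sweedler notation, and the only point one should treat with some care is that the induction functor's coaction formula is well defined on the quotient $\otimes_H$, which is guaranteed by the Yetter-Drinfeld compatibility of $C\boxtimes H$ as an $H$-module $H$-comodule (and is implicitly used when the induction functor was defined in \Cref{subsec:YD-generalities}). Once $\varphi$ is known to be a well-defined Yetter-Drinfeld morphism, its bijectivity is the classical tensor-product identification $(C\otimes H)\otimes_H A\cong C\otimes A$.
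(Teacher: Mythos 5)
Your proof is correct, but it takes a genuinely different (and more computational) route than the paper. You verify directly, in Sweedler notation, that the canonical module isomorphism $\varphi\colon (C\boxtimes H)\otimes_H A\to C\boxtimes A$ intertwines the induced $A$-coaction on the left with the free Yetter-Drinfeld coaction on the right; the key step is the identity $S(a_{(1)})S(h_{(1)})=S(h_{(1)}a_{(1)})$ combined with $\Delta^{(2)}(ha)=h_{(1)}a_{(1)}\otimes h_{(2)}a_{(2)}\otimes h_{(3)}a_{(3)}$, and your two displayed expressions do match exactly. The paper instead argues in the opposite direction and avoids the computation entirely: it observes that $c\mapsto (c\boxtimes 1)\otimes_H 1$ is an $A$-comodule map from the generating comodule $C\subset C\boxtimes A$ into $(C\boxtimes H)\otimes_H A$, and then invokes the universal property of the free Yetter-Drinfeld module $C\boxtimes A$ to conclude that its unique module-map extension --- which is precisely $\varphi^{-1}$ --- is automatically a morphism in $\mathcal{YD}_A^A$. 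The paper's argument is shorter and reuses machinery already set up (the universal property of $C\boxtimes H$), whereas yours is self-contained and makes the compatibility of the two coactions explicit; your remark that well-definedness on the quotient $\otimes_H$ is guaranteed by the definition of the induction functor is also a point the paper leaves implicit. Both arguments are complete.
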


\begin{proof}
  The map $C\boxtimes A\supset C\boxtimes 1=C\to C\otimes H\subset (C\boxtimes H)\otimes_H A, c\mapsto c\otimes 1$ is obviously a comodule map. Its unique extension to $C\boxtimes A$ as a module map is the canonical module isomorphism $c\boxtimes a\mapsto c\boxtimes 1\otimes_Ha$. By the universal property of the free Yetter-Drinfeld module, it is a Yetter-Drinfeld map.
\end{proof}

\section{Projective resolution and Hochschild cohomology for \texorpdfstring{$\mathcal A(E)$}{A(E)} }
\label{sec:resolution-A(E)}

In this section we use the results from Section~\ref{freeprod} on projective 
resolutions of free products to describe a projective resolution for 
$\mathcal A(E)=\mathcal B(E)*\C\Z_2$, and compute the Hochschild cohomology with one-dimensional coefficients $\mathrm{H}^*(\mathcal A(E),{_\e\C_\tau})$. We denote by $g$ the unitary generator of 
$\mathbb{C}\mathbb{Z}_2$ ($g^2=1$), and by $x_{jk}$, $j,k=1,\ldots, n$ the matrix elements generating $\mathcal B(E)$ as an algebra. 

Since $\Z_2$ is a finite group, it is easy to see that $\C_\varepsilon$ is a projective module (indeed, $\C\Z_2=\C(1+g)\oplus \C(1-g))$ and $\C(1+g)\cong \C_\varepsilon$). Hence, there is a projective resolution of length 0, \[0\rightarrow \C(1+g) \rightarrow \C_\varepsilon \rightarrow 0.\]
However, in order to apply \Cref{thm:res_for_free_product,thm:YD-res_free-product}, we need to find a projective resolution with $P_0=\C\Z_2$. The free resolution given in, e.g., \cite[p.~167]{Wei94} would work, but is infinite. We prefer to use the finite projective resolution from the following lemma.
\begin{lemma}
The sequence
 \begin{align}\label{eq:resZ2}
0\to\mathbb{C}(1-g)\xrightarrow{\Psi_1}\mathbb{C}\mathbb{Z}_2\xrightarrow{ 
\varepsilon}\mathbb{C}_\varepsilon\to 0,
\end{align}
where
\[
\varepsilon(1)=\varepsilon(g)=1,\qquad \Psi_1(1-g)=1-g,
\]
is a projective resolution of the counit of $\mathbb{C}\mathbb{Z}_2$.
\end{lemma}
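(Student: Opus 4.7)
My plan is to verify two things separately: the exactness of the three-term sequence, and the projectivity of the module $\mathbb{C}(1-g)$ over $\mathbb{C}\mathbb{Z}_2$.

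For exactness, I would first note that $\mathbb{C}(1-g)$ really is a right $\mathbb{C}\mathbb{Z}_2$-submodule of $\mathbb{C}\mathbb{Z}_2$, since $(1-g)\cdot g = g - g^2 = -(1-g)$, so that $\Psi_1$ (being essentially the inclusion) is automatically a module map. Surjectivity of $\varepsilon$ is immediate. The kernel is exactly $\mathbb{C}(1-g)$: an element $a+bg$ satisfies $\varepsilon(a+bg)=a+b=0$ precisely when $a+bg = a(1-g)$. Since $\Psi_1$ is injective and its image is all of $\mathbb{C}(1-g)$, this matches $\ker\varepsilon$, giving exactness at every position.

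For projectivity, I would use the elementary observation that $e_\pm := \tfrac{1\pm g}{2}$ form a complete system of orthogonal idempotents in $\mathbb{C}\mathbb{Z}_2$ ($e_+ + e_- = 1$, $e_+ e_- = 0$, $e_\pm^2 = e_\pm$). This yields a decomposition
\[
\mathbb{C}\mathbb{Z}_2 \;=\; e_+\mathbb{C}\mathbb{Z}_2 \,\oplus\, e_-\mathbb{C}\mathbb{Z}_2 \;=\; \mathbb{C}(1+g)\oplus \mathbb{C}(1-g)
\]
of right $\mathbb{C}\mathbb{Z}_2$-modules, exhibiting $\mathbb{C}(1-g)$ as a direct summand of the free rank-one module $\mathbb{C}\mathbb{Z}_2$, and hence as a projective module.

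Since every step reduces to linear algebra in a two-dimensional algebra, I do not foresee any technical obstacle. The genuine purpose of the lemma is not the difficulty of its proof but the fact that it produces an explicit \emph{finite} projective resolution with $P_0 = \mathbb{C}\mathbb{Z}_2$, as needed to feed into Theorems~\ref{thm:res_for_free_product} and~\ref{thm:YD-res_free-product} for the free product $\mathcal{A}(E)=\mathcal{B}(E)\ast\mathbb{C}\mathbb{Z}_2$; the infinite standard resolution from group cohomology would not suffice for our later applications.
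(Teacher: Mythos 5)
Your proposal is correct and follows essentially the same route as the paper: projectivity via complementing $\mathbb{C}(1-g)$ inside the free module $\mathbb{C}\mathbb{Z}_2$ (your idempotents $e_\pm=\tfrac{1\pm g}{2}$ make explicit the decomposition $\mathbb{C}\mathbb{Z}_2=\mathbb{C}(1+g)\oplus\mathbb{C}(1-g)$ the paper already records just before the lemma), and exactness by the direct computation $\ker\varepsilon=\{a+bg: a+b=0\}=\mathbb{C}(1-g)=\operatorname{im}\Psi_1$. You merely spell out the steps the paper declares ``clear'' or ``straightforward''; there is no substantive difference.
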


\begin{proof}
Indeed, $\mathbb{C}(1-g)$ is a projective $\mathbb{C}\mathbb{Z}_2$-module since 
it can be complemented to a free $\mathbb{C}\mathbb{Z}_2$-module. Injectivity of 
$\Psi_1$  is clear. Since $\varepsilon(\Psi_1(1-g))=0$, we have that $\operatorname{
Im}(\Psi_1)\subseteq\ker\varepsilon$ and it is straightforward to prove that 
this containment is in fact an equality.
\end{proof}

From this we get the Hochschild cohomology of $\C\Z_2$ with 1-dimensional coefficients.

\begin{proposition} \label{prop:HH-of-CZ2}
  There are exactly two characters on $\C\Z_2$, namely $\e$ and the sign representation $\sigma$, determined by $\sigma (g)=-1$. The Hochschild cohomology of $\C\Z_2$ with 1-dimensional coefficients is given by
  \begin{align*}
    \dim \mathrm{H}^0(\C\Z_2 , {_\e\C_\e}) &= 1 ,& \dim \mathrm{H}^0(\C\Z_2 , {_\e\C_\sigma}) &=0 \\
    \dim \mathrm H^k(\C\Z_2 , {_\e\C_\tau}) &= 0, \text{ for any } k \geq 1, \, \tau\in\{\e,\sigma\}.&&
\end{align*}
\end{proposition}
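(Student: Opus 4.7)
My plan is to apply \Cref{thm:hom-spaces} together with \Cref{1-dim-coefficients} to the length-one projective resolution \eqref{eq:resZ2}. First I would identify the characters: any $\tau\colon \C\Z_2\to\C$ is determined by $\tau(g)$, and the relation $g^2=1$ forces $\tau(g)\in\{+1,-1\}$, giving exactly $\e$ and $\sigma$. Since the antipode of $\C\Z_2$ satisfies $S(g)=g^{-1}=g$, we have $\e\circ S=\e$, so $({_\e\C_\tau})''=\C_\tau$ for both characters and the Hochschild cohomology is the cohomology of the two-term complex
\[
0\longrightarrow \hom_{\C\Z_2}(\C\Z_2,\C_\tau)\xrightarrow{-\circ\Psi_1}\hom_{\C\Z_2}(\C(1-g),\C_\tau)\longrightarrow 0,
\]
which already guarantees $\mathrm H^k(\C\Z_2,{_\e\C_\tau})=0$ for $k\geq2$.

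For degrees $0$ and $1$, I would compute each hom-space directly. In both cases $\hom_{\C\Z_2}(\C\Z_2,\C_\tau)\cong\C$ via evaluation at $1$. For the second hom-space, $\C\Z_2$-linearity together with $(1-g)g=-(1-g)$ forces any map $f$ to satisfy $f(1-g)(\tau(g)+1)=0$. When $\tau=\e$, this condition reads $2f(1-g)=0$, so $\hom_{\C\Z_2}(\C(1-g),\C_\e)=0$; the complex reduces to $\C\to 0$ and yields $\dim\mathrm H^0(\C\Z_2,{_\e\C_\e})=1$, $\dim\mathrm H^1(\C\Z_2,{_\e\C_\e})=0$. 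When $\tau=\sigma$, the constraint is vacuous and the hom-space is again one-dimensional, represented by evaluation at $1-g$; the precomposition map sends $f$ with $f(1)=\lambda$ to $f(\Psi_1(1-g))=f(1)-f(g)=\lambda-(-\lambda)=2\lambda$, an isomorphism $\C\to\C$, so both $\mathrm H^0(\C\Z_2,{_\e\C_\sigma})$ and $\mathrm H^1(\C\Z_2,{_\e\C_\sigma})$ vanish.

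There is no real obstacle in this argument; it is a short bookkeeping exercise with signs. The only mildly subtle point is remembering to pass from the bimodule ${_\e\C_\tau}$ to the associated right module $({_\e\C_\tau})''=\C_{(\e\circ S)*\tau}$ prescribed by \Cref{1-dim-coefficients}, which in the case of $\C\Z_2$ simplifies to $\C_\tau$ because $\e\circ S=\e$.
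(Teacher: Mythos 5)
Your proposal is correct and follows essentially the same route as the paper: both use the length-one projective resolution \eqref{eq:resZ2}, identify the two hom-spaces (with the same equivariance computation showing $\hom_{\C\Z_2}(\C(1-g),\C_\tau)$ vanishes for $\tau=\e$ and is one-dimensional for $\tau=\sigma$), and conclude by showing the precomposition map is an isomorphism in the $\sigma$ case. Your explicit identification of that map as multiplication by $2$ is just a slightly more concrete version of the paper's injectivity argument.
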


\begin{proof}
The first statement is obvious.
Let $\tau\in\{\e,\sigma\}$. The resolution \eqref{eq:resZ2} yields, by passing to hom-spaces, the complex 
  \[0\rightarrow \hom_{\C\Z_2}(\C\Z_2,\C_\tau)\xrightarrow{-\circ \Psi_1} 
    \hom_{\C\Z_2}(\C(1-g),\C_\tau) \rightarrow 0,\]
  whose cohomology coincides with the Hochschild cohomology spaces we aim to determine. Clearly, $\dim(\hom_{\C\Z_2}(\C\Z_2,\C_\tau))=1$ because $\C\Z_2$ is a free module with basis $1$. On the other hand, we find that \[\dim (\hom_{\C\Z_2}(\C(1-g),\C_\tau))=
    \begin{cases}
      0&\tau=\e,\\
      1&\tau=\sigma;
    \end{cases}
  \]
  indeed, the linear map $f_z$ determined by $(1-g)\mapsto z$ is $\C\Z_2$-equivariant if and only if \[z=f(1-g)=f(-(1-g)g)=-f(1-g)\tau(g)=-\tau(g)z,\]
  which is equivalent to $z=0$ or $\tau(g)=-1$, i.e.\ $\tau=\sigma$. For $\tau=\e$, this completes the proof. For $\tau=\sigma$, we also need to check injectivity of $-\circ\Psi_1$. Suppose that $f\in \hom_{\C\Z_2}(\C\Z_2,\C_\tau)$. Then, necessarily,  $f(1+g)=f(1)+f(1)\sigma(g)=0$. If $f\circ\Psi_1=0$, then also $f(1-g)=0$ and, therefore $f=0$, which proves injectivity.
\end{proof}

We apply Theorem~\ref{thm:res_for_free_product} to the resolutions \eqref{eq:resZ2} and \eqref{resO+E} and obtain the following projective resolution of the counit for $\mathcal A(E)$.

\begin{theorem} Let $E\in\operatorname{GL}_n(\C)$ and $\mathcal A=\mathcal A(E)$. Then the sequence
  \begin{align} \label{eq_resolution_of_A}
    0\to \mathcal A \xrightarrow{\Phi^{\mathcal A}_3}M_n(\mathcal A )\xrightarrow{\Phi^{\mathcal A}_2} M_n(\mathcal A )\oplus(\mathbb{C}(1-g)\otimes_{\mathbb{C}\mathbb{Z}_2}\mathcal A )
    \xrightarrow{\Phi^{\mathcal A}_1}\mathcal A \xrightarrow{\varepsilon}\mathbb{C}_\varepsilon\to0,
  \end{align} 
  with the mappings
  \begin{align*}
    	\Phi^{\mathcal A}_3 (a) &=\bigl(Ex E^{-t}-E^tE^{-1}\bigr)a, \\
	\Phi^{\mathcal A}_2 (A) &= (A + (E^{-1}x^t A E)^t, 0),\\
	\Phi^{\mathcal A}_1 (A,(1-g)\otimes_{\C\Z_2}a) &= \tr(x^tA)-\tr(A)+(1-g)a
  \end{align*} 
  yields a projective resolution of the counit of $\mathcal A$.
\end{theorem}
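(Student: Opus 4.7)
The strategy is to invoke Theorem~\ref{thm:res_for_free_product} with $B=\mathcal B(E)$ and $C=\C\Z_2$, feeding in the free resolution \eqref{resO+E} for $\mathcal B$ and the projective resolution \eqref{eq:resZ2} for $\C\Z_2$. Since $\mathcal A=\mathcal A(E)=\mathcal B\free\C\Z_2$, the theorem immediately yields a projective resolution of the counit of $\mathcal A$; the remaining task is to show that it is naturally isomorphic to the one stated.

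First I would identify the induced modules. For the $\mathcal B$-side, $\mathcal B\otimes_{\mathcal B}\mathcal A\cong\mathcal A$ and $M_n(\mathcal B)\otimes_{\mathcal B}\mathcal A\cong M_n(\mathcal A)$ via the canonical maps, so the induced complex in degrees 1--3 becomes $\mathcal A\to M_n(\mathcal A)\to M_n(\mathcal A)$ as claimed. For the $\C\Z_2$-side, the only nonzero piece above degree 0 is $\widetilde Q_1=\C(1-g)\otimes_{\C\Z_2}\mathcal A$; all $\widetilde Q_k$ for $k\geq2$ vanish. This explains why the resolution has the asymmetric shape $\mathcal A\to M_n(\mathcal A)\to M_n(\mathcal A)\oplus\widetilde Q_1\to\mathcal A$.

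Second, I would translate the boundary formulas. Under the identification $M_n(\mathcal B)\otimes_{\mathcal B}\mathcal A\cong M_n(\mathcal A)$, the induced map $\widetilde\Phi^{\mathcal B}_k=\Phi^{\mathcal B}_k\otimes_{\mathcal B}\mathrm{id}_{\mathcal A}$ is obtained simply by reading the matrix formulas for $\Phi^{\mathcal B}_k$ with entries interpreted in $\mathcal A$ instead of $\mathcal B$ (the generators $x_{ij}$ of $\mathcal B$ still make sense in $\mathcal A$ via the canonical embedding). This gives the stated formulas for $\Phi^{\mathcal A}_3$ and for the first summand of $\Phi^{\mathcal A}_2$. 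In degree 1, by the construction of Theorem~\ref{thm:res_for_free_product} the boundary is the sum map $\widetilde\Phi^{\mathcal B}_1+\widetilde\Psi_1$, and $\widetilde\Psi_1\bigl((1-g)\otimes_{\C\Z_2}a\bigr)=(1-g)a\in\mathcal A$, so we obtain the formula $(A,(1-g)\otimes_{\C\Z_2}a)\mapsto\tr(x^tA)-\tr(A)+(1-g)a$ as required.

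Finally, I would note that projectivity of each term and exactness are automatic consequences of Theorem~\ref{thm:res_for_free_product}, since $\mathcal A$ is free (hence flat) as a left $\mathcal B$-module and as a left $\C\Z_2$-module by the standard structure of free products of unital algebras. Thus no obstacle is anticipated: the argument is a direct specialization, and the only thing one might wish to double-check is the bookkeeping of signs and of the zero component in $\Phi^{\mathcal A}_2$, coming from the fact that $\widetilde\Psi_2=0$.
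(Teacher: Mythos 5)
Your proposal is correct and follows exactly the route the paper takes: the paper derives this theorem by applying Theorem~\ref{thm:res_for_free_product} to the resolutions \eqref{resO+E} and \eqref{eq:resZ2}, with the same identifications of the induced modules and the same reading of the boundary maps (including the vanishing of $\widetilde Q_k$ for $k\geq 2$, which produces the zero component of $\Phi^{\mathcal A}_2$). Nothing further is needed.
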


We now aim to compute the Hochschild cohomology of $\mathcal A(E)$ with the coefficients in $_\e\C_\tau$, where $\tau$ is a character on the free product ${\mathcal A}(E)=\mathcal B(E)\free \C\Z_2$. By \Cref{thm:hom-spaces}, this is the cohomology of the complex obtained by applying the functor $\hom_{\A}(-,\C_\tau)$ to the resolution \eqref{eq_resolution_of_A}.

\begin{theorem}
    Let $E\in \operatorname{GL}_n(\C)$, $\mathcal A:=\mathcal A(E)$, and $\tau\colon \mathcal A\to \mathbb C$ a character. Also assume that $F:=E^tE^{-1}$ is generic, so that $\mathcal A(E)$ is cosemisimple. Set $T:=\tau(x)$, $t=\tau(g)$.
  Furthermore, let 
  \begin{align*}
  p&=\begin{cases}
    1 & \mbox{ if } T=I_n\\
    0 & \mbox{ otherwise, }
  \end{cases}&
  q&=\begin{cases}
    1 & \mbox{ if } t=1\\
    0 & \mbox{ otherwise, }
  \end{cases}\\ 
    d&=\operatorname{dim}\{\Kappa\in M_n:\Kappa^t=-T^tF\Kappa \},&
                                                            s&=\begin{cases}
    0 & \mbox{ if } T^t=F^{-2}\\
    1 & \mbox{ otherwise. }
  \end{cases}
  \end{align*}
Then the Hochschild cohomology for $\mathcal A$ with 1-dimensional coefficients is given as follows:
\begin{align*}
    \dim \mathrm{H}^0(\mathcal A, {_\e\C_\tau}) &= pq ,
    &\dim \mathrm{H}^1(\mathcal A, {_\e\C_\tau}) &= d -(1-p)q,
    \\
    \dim \mathrm H^2(\mathcal A, {_\e\C_\tau}) &= d-s,
    &\dim \mathrm H^3(\mathcal A, {_\e\C_\tau}) &= 1-s,
\end{align*}
and $\dim \mathrm H^k(\mathcal A, {_\e\C_\tau}) = 0$ for all $ k \geq 4 $.
\end{theorem}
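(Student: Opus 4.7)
The plan is to mimic the second proof of \Cref{thm:H(B(E))}: apply the functor $\hom_{\mathcal A}(-,\C_\tau)$ to the projective resolution \eqref{eq_resolution_of_A} of $\mathcal A=\mathcal A(E)$, identify each hom-space as a finite-dimensional vector space via the pairings of \Cref{lemma:traces-free-modules}, compute the induced maps $(\Phi_k^{\mathcal A})^*$, and read off the cohomology dimensions by rank-nullity using the formula $\dim\mathrm{H}^k(\mathcal A,{_\e\C_\tau})=\dim\ker(\Phi_{k+1}^{\mathcal A})^*-\dim\im(\Phi_k^{\mathcal A})^*$ inherited from \Cref{thm:hom-spaces}.

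The genuinely new ingredient compared to the $\mathcal B(E)$-computation is the hom-space attached to the summand $\C(1-g)\otimes_{\C\Z_2}\mathcal A$. By tensor-hom adjunction this space is isomorphic to $\hom_{\C\Z_2}(\C(1-g),\C_{\tau|_{\C\Z_2}})$, and since $\C(1-g)$ carries the sign representation of $\Z_2$ (i.e.\ $\C(1-g)\cong\C_\sigma$ in the notation of \Cref{prop:HH-of-CZ2}), it vanishes for $t=1$ and is one-dimensional for $t=-1$; in short, its dimension equals $1-q$. In the non-trivial case I would parameterize it by $z\in\C$ via $(1-g)\otimes_{\C\Z_2}a\mapsto z\tau(a)$.

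With these identifications, the three differentials are straightforward to transcribe. The top map $(\Phi_3^{\mathcal A})^*$ coincides with $(\Phi_3^{\mathcal B})^*$ verbatim, yielding $\dim\ker=n^2-s$ and $\dim\im=s$. The middle map $(\Phi_2^{\mathcal A})^*\colon M_n(\C)\oplus\C^{1-q}\to M_n(\C)$ agrees with $(\Phi_2^{\mathcal B})^*$ on the first summand and is zero on the second, because $\Phi_2^{\mathcal A}$ has a vanishing second coordinate; hence $\dim\ker=d+(1-q)$ and $\dim\im=n^2-d$. The only genuinely new computation is $(\Phi_1^{\mathcal A})^*\colon\C\to M_n(\C)\oplus\C^{1-q}$: the additional summand $(1-g)a$ in $\Phi_1^{\mathcal A}$ contributes a factor $(1-t)$, so that the map reads $\lambda\mapsto(\lambda(T-I),(1-t)\lambda)$. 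For $t=-1$ it is injective, giving kernel dimension $0=pq$ and image dimension $1=1-pq$; for $t=1$ the second coordinate is absent and the map becomes $\lambda\mapsto\lambda(T-I)$, with kernel dimension $p=pq$ and image dimension $1-p=1-pq$.

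Feeding these four dimension counts into the rank-nullity formula produces the advertised values, and vanishing beyond degree three is automatic from the length of the resolution. I do not anticipate a substantive obstacle; the only genuinely new work is the Frobenius-reciprocity identification of the ``sign'' hom-space and the careful bookkeeping of the two cases $t=\pm1$. Alternatively, for $k\geq 2$ one could invoke \Cref{lem_higher_Hn_of_free_product} together with \Cref{thm:H(B(E))} and \Cref{prop:HH-of-CZ2} to bypass the higher-degree computations, but the direct resolution approach conveniently yields $\mathrm{H}^0$ and $\mathrm{H}^1$ at the same time.
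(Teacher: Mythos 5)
Your proposal is correct and follows essentially the same route as the paper's proof: apply $\hom_{\mathcal A}(-,\C_\tau)$ to the resolution \eqref{eq_resolution_of_A}, observe that $(\Phi_2^{\mathcal A})^*$ and $(\Phi_3^{\mathcal A})^*$ reduce to the $\mathcal B(E)$-maps while the new summand $\C(1-g)\otimes_{\C\Z_2}\mathcal A$ contributes a hom-space of dimension $1-q$, and conclude by rank-nullity. The only cosmetic difference is that you identify that hom-space via tensor-hom adjunction, whereas the paper extends the $\lambda^\tau$-pairing directly; both give the same case split in $t=\pm1$ and the same dimension counts.
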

\begin{proof}
  We extend the conventions from \Cref{lemma:traces-free-modules} to the projective module $\mathbb{C}(1-g)\otimes_{\mathbb{C}\mathbb{Z}_2}\mathcal A$ by defining
  \[\lambda^\tau\bigl((1-g)\otimes_{\C\Z_2}a\bigr):=\lambda\tau((1-g)a)=\lambda\cdot (1-t)\tau(a);\]
  note that the map $\lambda\mapsto \lambda^\tau\colon \C\to \hom_{\mathcal A}(\mathbb{C}(1-g)\otimes_{\mathbb{C}\mathbb{Z}_2}\mathcal A,\C_\tau)$ is an isomorphism if $t=-1$ and the zero map if $t=1$.
 Applying the functor $\hom_{\A}(-,\C_\tau)$ and the discussed isomorphisms to the resolution \eqref{eq_resolution_of_A},  Hochschild cohomology coincides with cohomology of the complex
  \[
    0 \rightarrow \C \xrightarrow{(\Phi_1^{\mathcal A})^*} 
    M_n(\C) \oplus \delta_{\{-t\}}\C 
    \xrightarrow{(\Phi_2^{\mathcal A})^*} 
    M_n(\C) 
    \xrightarrow{(\Phi_3^{\mathcal A})^*} 
    \C \to 0,
  \]
  where $\delta_{\{-t\}}\C $ means that this summand is omitted if $t=\tau(g)=1$. 
  
  Let us have a closer look at the mappings $(\Phi_j^{\mathcal A})^*$. For $j=1$, we find
  \begin{align*}
    (\lambda^{\tau}\circ\Phi_1^{\mathcal A})(A,(1-g)\otimes_{\C\Z_2} a)
    &= \lambda\tau \big(\tr(x^tA)-\tr(A)\big) + (1-t) \cdot \lambda \tau(a)
    \\ & 
         =\lambda\tau \tr \big((T^t-I)A\big)  +\lambda\tau\bigl( (1-t) \cdot \tau(a)\bigr)
    \\ &
         = \lambda\bigl((T-I) \oplus \delta_{\{-t\}} \bigr)^{\tau} (A,(1-g)\otimes_{\C\Z_2} a),
  \end{align*}
    so \[(\Phi_1^{\mathcal A})^*(\lambda)= \lambda\bigl((T-I)\oplus\delta_{\{-t\}}\cdot 1\bigr)\in M_n(\C)\oplus \delta_{\{-t\}} \C.\] 
  For $j>1$, we note that $(\Phi_2^{\mathcal A})^{*}=(\Phi_2^{\mathcal B})^{*} + \delta_{\{-t\}}\cdot 0$ and  $(\Phi_3^{\mathcal A})^{*}=(\Phi_3^{\mathcal B})^{*}$ for the maps $(\Phi_3^{\mathcal B})^{*}$ defined in the second proof of \Cref{thm:H(B(E))}. So we find that
  \begin{align*}
    \dim\ker (\Phi_1^{\mathcal A})^*&=pq,& \dim\im (\Phi_1^{\mathcal A})^*&=1-pq\\
    \dim \ker (\Phi_2^{\mathcal A})^{*}&=\dim \ker (\Phi_2^{\mathcal B})^{*}+1-q=d+1-q
    &
      \dim \im (\Phi_2^{\mathcal A})^{*}&=\dim \im (\Phi_2^{\mathcal B})^{*}=n^2-d\\
    \dim \ker (\Phi_3^{\mathcal A})^{*}&=\dim \ker (\Phi_3^{\mathcal B})^{*} = n^2-s
    &
      \dim \im (\Phi_3^{\mathcal A})^{*}&=\dim \im (\Phi_3^{\mathcal B})^{*}=s
  \end{align*}
  and taking differences completes the proof. Note that $ \dim \mathrm H^k(\mathcal{A} , {_\e\C_\tau})$ for $k\geq2$ can alternatively be deduced from \Cref{thm:H(B(E)),prop:HH-of-CZ2,lem_higher_Hn_of_free_product}.
\end{proof}

\section{Matrix Hopf algebras and glued products}
\label{sec_glued}

The main aim of this section is to identify $\mathcal H(F)$ with a certain subalgebra of $\mathcal A(E)$, which in the case $E=F=I$ will specialize to
\[\Pol{U_n^+}\cong \Pol{O_n^+\gluedfree \Z_2} \subset  \Pol{O_n^+\free\Z_2}\cong\Pol{B_{n+1}^{\#+}}\]
 (with $\gluedfree$ the glued free product of compact matrix quantum groups). To this end, we extend the notion of glued free products, defined by Tarrago and Weber \cite{TerragoWeber2017} for compact matrix quantum groups, to Hopf algebras. As for quantum groups, the glued free product refers to an additional matrix structure. 
The following definition of matrix Hopf algebra due to \v{S}koda gives the right setting.  \v{S}koda's definition makes use of the concept of Hopf envelope, which we do not explain here, but we will show below that matrix Hopf algebras can equivalently be characterized without referring to Hopf envelopes.

\begin{definition}[{cf.\ \cite[Section 13]{Skoda2003}}]
  A \emph{matrix bialgebra} is a pair $(B,u)$ where $B$ is a bialgebra and $u=(u_{ij})_{i,j=1}^n\in M_n(B)$ is a finite-dimensional corepresentation such that $B$ is generated by the $u_{ij}$ as a unital algebra. A \emph{matrix Hopf algebra} is a pair $(A,u)$ where $A$ is a Hopf algebra and $u$ is a finite-dimensional corepresentation of $A$ such that the canonical map $\operatorname{Hopf}(B)\to A$ is onto, where $B$ is the unital algebra generated by the $u_{ij}$, and $\operatorname{Hopf}$ denotes functor which maps a bialgebra to its Hopf envelope; note that $(B,u)$ is automatically a matrix bialgebra.

  If $(A,u)$ is a matrix Hopf algebra, we also say that $A$ is matrix Hopf algebra with \emph{fundamental corepresentation}\footnote{In \cite{Skoda2003}, \v{S}koda calls $u$ a \emph{basis} instead, but we believe that this terminology is somewhat misleading.} $u$. 
\end{definition}

\begin{proposition}
  Let $A$ be a Hopf algebra with antipode $S$, and $u\in M_n(A)$ a finite-dimensional corepresentation. Then the following are equivalent.
  \begin{enumerate}
  \item $(A,u)$ is a matrix Hopf algebra,
  \item the $u_{ij}$, $i,j\in\{1,\ldots,n\}$, generate $A$ as a Hopf algebra, i.e.\ a Hopf subalgebra $A'\subseteq A$ which contains all \(u_{ij}\) must coincide with $A$,  
  \item the $S^k(u_{ij})$, $k\in\mathbb N_0, i,j\in\{1,\ldots,n\}$, generate $A$ as an algebra. 
  \end{enumerate}
\end{proposition}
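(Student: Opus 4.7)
The plan is to prove $(1) \Leftrightarrow (2) \Leftrightarrow (3)$ by interpreting both sides in terms of the smallest Hopf subalgebra of $A$ containing the entries of $u$.

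For $(1) \Leftrightarrow (2)$, let $B \subseteq A$ denote the unital subalgebra generated by the $u_{ij}$. Since $\Delta(u_{ij}) = \sum_k u_{ik}\otimes u_{kj} \in B\otimes B$ and $\e(u_{ij}) = \delta_{ij}$, $B$ is automatically a subbialgebra of $A$. By the universal property of the Hopf envelope, the bialgebra inclusion $B \hookrightarrow A$ factors uniquely through a Hopf algebra morphism $\operatorname{Hopf}(B) \to A$, and its image is the smallest Hopf subalgebra of $A$ containing $B$. Hence this map is surjective precisely when no proper Hopf subalgebra of $A$ contains the $u_{ij}$, which is condition (2).

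For $(2) \Leftrightarrow (3)$, let $C \subseteq A$ be the unital subalgebra generated by $\{S^k(u_{ij}) : k \in \mathbb N_0,\ 1\le i,j\le n\}$. Any Hopf subalgebra of $A$ containing the $u_{ij}$ must contain $C$, so it suffices to show that $C$ itself is a Hopf subalgebra: then $C$ is the smallest such, and (2) becomes equivalent to $C = A$, which is (3). Closure under $\e$ follows from $\e\circ S = \e$. Closure under $S$ is clear from the construction, using that $S$ is an anti-homomorphism and therefore sends any polynomial in the $S^k(u_{ij})$ to a (reversed) polynomial in the $S^{k+1}(u_{ij})$, which lies in $C$. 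Closure under $\Delta$ is proved by induction on $k$ applied to the generators: the base case $\Delta(u_{ij}) \in B \otimes B \subseteq C \otimes C$ is immediate, and the inductive step uses the standard identity $\Delta\circ S = (S\otimes S)\circ\Delta^{\mathrm{op}}$ combined with $S(C) \subseteq C$ to pass from $S^k(u_{ij})$ to $S^{k+1}(u_{ij})$. Since $\Delta$ is an algebra homomorphism, closure on the generators then propagates to all of $C$.

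The only genuinely conceptual ingredient is the universal property of $\operatorname{Hopf}(B)$ used in $(1)\Leftrightarrow(2)$; the remaining verifications amount to the standard observation that the antipode identities let one transfer coproduct-closure from $B$ to the subalgebra generated by all of its antipode iterates, so I do not expect serious obstacles.
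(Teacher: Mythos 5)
Your proof is correct and follows essentially the same route as the paper: the universal property of the Hopf envelope identifies the image of $\operatorname{Hopf}(B)\to A$ with the smallest Hopf subalgebra containing the $u_{ij}$, and the subalgebra generated by all $S^k(u_{ij})$ is verified to be that smallest Hopf subalgebra. You supply more detail on the closure of $C$ under $\Delta$ and $S$ than the paper does, but the underlying argument is identical.
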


\begin{proof}
  Denote $B$ the subalgebra of $A$ generated by the $u_{ij}$ and $\operatorname{Hopf}(B\hookrightarrow A)\colon \operatorname{Hopf}(B)\to A$ the canonical map. Suppose that $\operatorname{Hopf}(B\hookrightarrow A)$ is onto and $B\subset A'\subset A$ for a Hopf subalgebra $A$. With $\iota_{A'}^A\colon A'\hookrightarrow A$ the embedding map, $\operatorname{Hopf}(B\hookrightarrow A)=\iota_{A'}^A\circ\operatorname{Hopf}(B\hookrightarrow A')$ by the universal property of the Hopf envelope. It follows that $\iota_{A'}^A\colon A'\hookrightarrow A$ is onto, i.e.\ $A=A'$, proving that (1) implies (2). Conversely, if  $\operatorname{Hopf}(B\hookrightarrow A)$ is not onto, then its image is a Hopf subalgebra $A'\subset A$, $A'\neq A$, which contains all $u_{ij}$, i.e.\ $A$ is not generated by the $u_{ij}$ as a Hopf algebra.

  Equivalence with the third statement follows from the simple observation that the algebra generated by all $S^k(u_{ij})$ is a Hopf algebra, hence it is the Hopf algebra generated by the $u_{ij}$. 
\end{proof}

\begin{example}\ 
  \begin{itemize}
  \item If $G$ is a finitely generated group with generators $g_1,\ldots g_n$, then $(\C G,\operatorname{diag}(g_1,\ldots,g_n))$ is a matrix Hopf algebra.
  \item If $(\mathcal C(\mathbb G),u)$ is a compact matrix quantum group, then  $(\Pol{\mathbb G},u)$ is a matrix Hopf algebra. This follows easily from the fact that $S(u_{ij})=u_{ji}^*$.
  \item $(\mathcal B(E),x)$ is a matrix Hopf algebra. Note that the linear span of the $x_{ij}$ is invariant under $S$, $S(x)=E^{-1}x^tE$, and $\mathcal B(E)$ is (by definition) generated by the $x_{ij}$ as an algebra. In particular, $(\mathcal B(E),x)$ is also a matrix bialgebra.
  \item $(\mathcal H(F),u)$ is a matrix Hopf algebra. Indeed, $S(u_{ij})=v_{ji}$, so $\mathcal H(F)$ is generated as an algebra by the collection of $u_{ij}$ and $S(u_{ij})$.  
  \end{itemize}
\end{example}

Before we come to the definition of glued free product of matrix Hopf algebras, we show that the free product of matrix Hopf algebras is again a matrix Hopf algebra.

\begin{proposition}
  Let $A$ and $B$ be matrix Hopf algebras with fundamental corepresentations $u_A=(u^A_{ij})_{i,j=1}^n$ and $u_B=(u^B_{kl})_{k,l=1}^m$, respectively. Then $A*B$ is a matrix Hopf algebra with fundamental corepresentation $u\oplus v\in M_{n+m}(A*B)$. In particular, $\mathcal A(E)=\mathcal B(E)\free\C\Z_2$ is a matrix Hopf algebra.
\end{proposition}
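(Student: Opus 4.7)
The plan is to treat this as an essentially formal verification relying on the two equivalent characterizations of matrix Hopf algebras established just above, together with the description of the free product of Hopf algebras recalled earlier in the paper (via \cite[Theorem 2.2]{Agore11}). The key observation is that $A$ and $B$ embed into $A\free B$ as Hopf subalgebras, so in particular the comultiplication and counit of $A\free B$ restrict on $A$ and on $B$ to the original structure maps.

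First, I would form the block-diagonal matrix
\[
w:=u_A\oplus u_B=\begin{pmatrix} u_A & 0\\ 0 & u_B\end{pmatrix}\in M_{n+m}(A\free B),
\]
and verify that $w$ is a corepresentation. Indexing rows and columns of $w$ by $\{1,\ldots,n+m\}$, the entries satisfy $w_{ij}=u^A_{ij}$ for $i,j\leq n$, $w_{ij}=u^B_{i-n,j-n}$ for $i,j>n$, and $w_{ij}=0$ otherwise. Since $\Delta$ and $\e$ restrict on $A$ and $B$ to their original comultiplications and counits, and since the off-diagonal entries of $w$ vanish, a straightforward case distinction gives $\Delta(w_{ij})=\sum_{k=1}^{n+m}w_{ik}\otimes w_{kj}$ and $\e(w_{ij})=\delta_{ij}$ for all $i,j$ (all ``cross'' terms in the sum vanish because one of the factors is zero).

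Second, I would invoke the previous proposition: it suffices to show that the entries $w_{ij}$ generate $A\free B$ as a Hopf algebra. The set of entries of $w$ is (up to zeros) the disjoint union of the entries of $u_A$ and those of $u_B$. By assumption, the $u^A_{ij}$ Hopf-generate $A$ and the $u^B_{kl}$ Hopf-generate $B$; hence the Hopf subalgebra of $A\free B$ generated by the entries of $w$ contains both $A$ and $B$, and therefore also their algebra generated subalgebra $A\free B$ itself. Thus $(A\free B,w)$ is a matrix Hopf algebra.

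For the final claim, applying this to $A=\mathcal B(E)$ with fundamental corepresentation $x\in M_n(\mathcal B(E))$ and $B=\C\Z_2$ with fundamental corepresentation the $1\times 1$ matrix $(g)$ yields that $\mathcal A(E)=\mathcal B(E)\free\C\Z_2$ is a matrix Hopf algebra with fundamental corepresentation $x\oplus(g)\in M_{n+1}(\mathcal A(E))$. The main point requiring any care is simply to have the earlier proposition available so we can ignore the antipode entirely and reduce everything to Hopf-algebraic generation; beyond that, no nontrivial obstacle arises.
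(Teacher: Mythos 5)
Your argument is correct and matches the paper's proof: both reduce the claim, via the preceding characterization of matrix Hopf algebras, to noting that any Hopf subalgebra of $A\free B$ containing all entries of $u_A\oplus u_B$ must contain $A$ and $B$, hence all of $A\free B$. The only difference is that you spell out the (standard) check that the block-diagonal matrix $u_A\oplus u_B$ is a corepresentation, which the paper leaves implicit.
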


\begin{proof}
  We have to show that $\{u^A_{ij},u^B_{kl}:i,j=1,\ldots n, k,l=1,\ldots m\}\subset H\subset A*B$ for a Hopf subalgebra $H$ implies $H=A*B$. This is obvious: if $H$ contains all $u^A_{ij}$, then it contains $A$, if $H$ contains all $u^B_{kl}$, then it contains $B$. This means that under the given assumption, $H$ contains both $A$ and $B$. Since $A*B$ is generated by $A$ and $B$ as a unital algebra, we conclude $H=A*B$. 
\end{proof}

\begin{definition}\label{def-glued} 
  Let $A$ and $B$ be matrix Hopf algebras with fundamental corepresentations $u_A=(u^A_{ij})_{i,j=1}^n$ and $u_B=(u^B_{kl})_{k,l=1}^m$, respectively. We define the {\em  glued free product} $A\gluedfree B$ as the Hopf subalgebra of $A*B$ generated by the products $u^A_{ij}u^B_{kl}$, which becomes a matrix Hopf algebra with fundamental corepresentation $(u^A_{ij}u^B_{kl})\in M_{nm}(A\gluedfree B)$. Similarly, the {\em glued tensor product} $A\gluedtensor B$ of matrix Hopf algebras is the Hopf subalgebra of $A\otimes B$ generated by the elements of the form $u^A_{ij}\otimes u^B_{kl}$, which becomes a matrix Hopf algebra with fundamental corepresentation $(u^A_{ij}\otimes u^B_{kl})\in M_{nm}(A\gluedtensor B)$. 
\end{definition} 

If $(\mathcal C(\mathbb G),u)$ and $(\mathcal C(\mathbb H),v)$ are compact matrix quantum groups, then 
\[\Pol{\mathbb G}*\Pol{\mathbb H}=\Pol{\mathbb G*\mathbb H}, \quad \mbox{ and } \quad \Pol{\mathbb G}\gluedfree \Pol{\mathbb H} = \Pol{\mathbb G\gluedfree \mathbb H}\]
agree with the usual notions due to Wang \cite{Wang95} and Tarrago and Weber \cite{TerragoWeber2017}.

In what follows, we adopt a part of Gromada's theory \cite{Gromada2022} of gluing compact quantum groups to the case of matrix Hopf algebras. However, we will focus only on those statements that we need for the analysis of the cosovereign Hopf algebra.

\begin{definition}
  A matrix Hopf algebra $(A,u)$ is \emph{$k$-reflective} for $k\in\N$ if the assignment
  \[
    u_{ij}\longmapsto\delta_{ij}g_k,
  \]
  where $g_k$ is the generator of $\C\Z_k$, extends to a Hopf algebra map from $\sigma\colon A\to \C\Z_k$.

  The \emph{degree of reflection} of $(A,u)$ is the largest $k$ such that $(A,u)$ is $k$-reflective and $0$ if $(A,u)$ is $k$-reflective for infinitely many $k\in\N$. 
\end{definition}

Note that if $(A,u)$ is $k$-reflective and $\ell$ divides $k$, then $(A,u)$ is also $\ell$-reflective. Furthermore,  the quotient of $(A,u)$ by the relations $u_{ij}=\delta_{ij}u_{11}$ is easily seen to be the group algebra of a cyclic group. Therefore, if $(A,u)$ has degree of reflection $0$, then this quotient must be isomorphic to $\C\Z$ and $(A,u)$ is $k$-reflective for all $k\in\N$.
Recall that $g$ denotes the generator of $\C\Z_2$.

\begin{lemma}\label{lem:glued-tensor-Z2}
  Let $(H,u)$ be a $2$-reflective matrix Hopf algebra. Then $H\cong H\gluedtensor\C\Z_2$ as Hopf algebras, where we view $(\C\Z_2,g)$ as a matrix Hopf algebra.
\end{lemma}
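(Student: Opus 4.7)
The plan is to construct an explicit Hopf algebra isomorphism $\phi\colon H \xrightarrow{\cong} H\gluedtensor \C\Z_2$ using the character $\sigma\colon H\to\C\Z_2$ guaranteed by $2$-reflectivity, so that $\sigma(u_{ij})=\delta_{ij}g$. Specifically, I will set
\[\phi:=(\mathrm{id}_H\otimes\sigma)\circ\Delta.\]
Both $\Delta\colon H\to H\otimes H$ and $\mathrm{id}_H\otimes\sigma\colon H\otimes H\to H\otimes \C\Z_2$ are Hopf algebra homomorphisms --- the comultiplication is a bialgebra map, and bialgebra maps between Hopf algebras automatically commute with antipodes by uniqueness of convolution inverses --- so $\phi$ is one too. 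A direct Sweedler calculation then gives $\phi(u_{ij})=\sum_k u_{ik}\otimes\sigma(u_{kj})=u_{ij}\otimes g$, so the fundamental corepresentation of $H$ maps precisely onto that of $H\gluedtensor \C\Z_2$.

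Next I will argue that $\phi(H) = H\gluedtensor\C\Z_2$. Since $(H,u)$ is a matrix Hopf algebra, $H$ is generated as a Hopf algebra by the $u_{ij}$ (by the characterization established earlier in this section), so the image of any Hopf algebra map out of $H$ is the Hopf subalgebra of the codomain generated by the images of the $u_{ij}$. Here this image is the Hopf subalgebra of $H\otimes\C\Z_2$ generated by the elements $u_{ij}\otimes g$, which by \Cref{def-glued} is exactly $H\gluedtensor\C\Z_2$. Thus $\phi$ corestricts to a surjective Hopf algebra homomorphism $\phi\colon H\twoheadrightarrow H\gluedtensor\C\Z_2$.

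To establish injectivity I will exhibit an explicit left inverse using the counit of $\C\Z_2$. Let $\psi$ be the restriction of the Hopf algebra map $\mathrm{id}_H\otimes \varepsilon\colon H\otimes\C\Z_2\to H$ to the Hopf subalgebra $H\gluedtensor\C\Z_2$. Then $\psi$ is a Hopf algebra homomorphism and $\psi(u_{ij}\otimes g)=u_{ij}\varepsilon(g)=u_{ij}$. Consequently $\psi\circ\phi\colon H\to H$ is a Hopf algebra endomorphism that agrees with $\mathrm{id}_H$ on the algebra generators $u_{ij}$, and so equals $\mathrm{id}_H$. Combined with the surjectivity from the previous paragraph, $\phi$ is the desired Hopf algebra isomorphism, with inverse $\psi$.

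No serious obstacle is anticipated; the proof is essentially a formal verification. The one step that deserves care is the identification $\phi(H)=H\gluedtensor\C\Z_2$, which rests on applying the equivalent characterization of matrix Hopf algebras (generation as a Hopf algebra by the $u_{ij}$) together with the fact that the image of a Hopf algebra map is the Hopf subalgebra generated by the images of any Hopf-generating set of the domain. The crucial role of the hypothesis is that $g^2=1$ so that $\sigma$ actually lands in $\C\Z_2$; without $2$-reflectivity there would be no well-defined character to feed into $\phi$.
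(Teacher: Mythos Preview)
Your proof is correct and follows essentially the same approach as the paper: define $\phi=(\mathrm{id}\otimes\sigma)\circ\Delta$, identify its image as $H\gluedtensor\C\Z_2$, and use the restriction of $\mathrm{id}\otimes\varepsilon$ as an inverse. One small wording slip: the $u_{ij}$ generate $H$ only as a \emph{Hopf} algebra, not as an algebra, but since $\psi\circ\phi$ is a Hopf algebra map (hence commutes with the antipode) this still suffices to conclude $\psi\circ\phi=\mathrm{id}_H$; the paper makes this explicit by verifying the identities directly on the algebra generators $S^k(u_{ij})$.
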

\begin{proof}
    The assumption of the lemma implies that the assignment
    \[
    H\longrightarrow H\otimes\C\Z_2: u_{ij}\longmapsto u_{ij}\otimes g
    \]
    gives rise to an algebra homomorphism $\phi=(\mathrm{id}\otimes \sigma)\circ\Delta_H\colon H\to H\otimes\C\Z_2$. Note that the antipode on $\C\Z_2$ is the identity map and, therefore, $\sigma\circ S=\sigma$. On the algebra generators $S^nu_{ij}$, $\phi$ evaluates as
    \[\phi(S^nu_{ij})=(S^n\otimes\sigma)\Delta^{(op_n)}(u_{ij})=S^n u_{ij}\otimes g;\]
    here $\Delta^{(op_n)}$ stands for the comultiplication $\Delta$ if $n$ is even and the opposite comultiplication $\Delta^{(op)}$ if $n$ is odd. It follows that the image of $\phi$ is the glued tensor product $H\gluedtensor \C\Z_2$, the algebra generated by all $S^n u_{ij}\otimes g$ inside $H\otimes \C\Z_2$.

    Next, consider the homomorphism
    \[
    \psi:={\rm id}\otimes\varepsilon_{\C\Z_2}:H\otimes\C\Z_2\longrightarrow H.
  \]
  This is easily seen to be a Hopf algebra map.
  Using the fact that $\varepsilon_{\C\Z_2}(g)=1$, we obtain
    \begin{align*}
    &(\phi\circ\psi)(S^nu_{ij}\otimes g)=\phi(S^nu_{ij})=S^nu_{ij}\otimes g,\\
    &(\psi\circ\phi)(S^nu_{ij})=({\rm id}\otimes\varepsilon)(S^nu_{ij}\otimes g)=S^nu_{ij}.
    \end{align*}
    Since the $S^nu_{ij}\otimes g$ generate $H\gluedtensor \C\Z_2$ as an algebra, we can conclude that $\psi|_{H\gluedtensor \C\Z_2}$ is a Hopf isomorphism with inverse $\phi$.
\end{proof}

\begin{lemma}\label{lem:gluedtensor-fluedfree-isomorphism}
  Let $(H,u)$ be a matrix Hopf algebra. Then there is an isomorphism of Hopf algebras
  \[
    H\gluedfree\C\Z\cong (H\gluedtensor\C\Z_2)\gluedfree\C\Z_2.
  \]
\end{lemma}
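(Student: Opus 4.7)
The plan is to realise both sides as isomorphic to a common Hopf subalgebra $K$ of the triple free product $H\ast\C\Z_2\ast\C\Z_2$. Throughout, let $g_1$ and $g_2$ denote the generators of the two respective copies of $\C\Z_2$.

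First, I would verify that $\C\Z\cong\C\Z_2\gluedfree\C\Z_2$ as matrix Hopf algebras, with $z\mapsto g_1g_2$: the element $g_1g_2$ has infinite order in $\Z_2\ast\Z_2\cong D_\infty$ and generates a copy of $\Z$, and by \Cref{def-glued} the glued free product is precisely the subalgebra generated by the products of the fundamental corepresentations. Taking free products with $H$ yields an injective Hopf algebra map $H\ast\C\Z\hookrightarrow H\ast\C\Z_2\ast\C\Z_2$ sending $u_{ij}z\mapsto u_{ij}g_1g_2$, whose restriction identifies $H\gluedfree\C\Z$ with the Hopf subalgebra $K\subseteq H\ast\C\Z_2\ast\C\Z_2$ generated by the elements $u_{ij}g_1g_2$.

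Next, I would define a Hopf algebra homomorphism
\[
\Theta\colon H\ast\C\Z_2\ast\C\Z_2\longrightarrow (H\otimes\C\Z_2)\ast\C\Z_2
\]
by the universal property of the free product, with $h\mapsto h\otimes 1$, $g_1\mapsto 1\otimes g_1$, and $g_2\mapsto g_2$ (each piece being a Hopf algebra map). The image of $u_{ij}g_1g_2$ is $(u_{ij}\otimes g_1)g_2$, which is a fundamental matrix entry of the Hopf subalgebra $(H\gluedtensor\C\Z_2)\gluedfree\C\Z_2\subseteq (H\otimes\C\Z_2)\ast\C\Z_2$. Hence $\Theta|_K$ is a surjective Hopf algebra map onto $(H\gluedtensor\C\Z_2)\gluedfree\C\Z_2$, and it suffices to show it is injective.

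The main obstacle is precisely this injectivity. Note that $\Theta$ itself is very far from injective — it imposes the relations $hg_1=g_1h$ for all $h\in H$ — but these identifications should be invisible on $K$, since every occurrence of $H$-material inside a reduced word lying in $K$ is flanked symmetrically by $g_1$ and $g_2$. To make this rigorous I would construct an explicit inverse $\Xi\colon (H\gluedtensor\C\Z_2)\gluedfree\C\Z_2\to K$ with $(u_{ij}\otimes g_1)g_2\mapsto u_{ij}g_1g_2$. A naive extension of $\Xi$ to the ambient $(H\gluedtensor\C\Z_2)\ast\C\Z_2$ does not work, since there is no Hopf algebra map $H\otimes\C\Z_2\to H\ast\C\Z_2$ with $h\otimes 1\mapsto h$ and $1\otimes g_1\mapsto g_1$ (these images do not commute in the target, while their preimages do). The resolution is to adapt Gromada's arguments from \cite{Gromada2022} — whose transfer from compact matrix quantum groups to matrix Hopf algebras is the declared purpose of this section — defining $\Xi$ directly on the generators of $(H\gluedtensor\C\Z_2)\gluedfree\C\Z_2$ and verifying relation by relation that the elements $u_{ij}g_1g_2\in K$ satisfy the universal relations defining the matrix Hopf algebra structure on the left-hand side. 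Mutual inversion of $\Xi$ and $\Theta|_K$ on generators then propagates to the full Hopf algebras, yielding the desired isomorphism.
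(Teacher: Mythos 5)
Your setup is sound and, once you compose the embedding $H\ast\C\Z\hookrightarrow H\ast\C\Z_2\ast\C\Z_2$ (via $z\mapsto g_1g_2$) with $\Theta$, you recover exactly the surjection $\phi\colon H\gluedfree\C\Z\to(H\gluedtensor\C\Z_2)\gluedfree\C\Z_2$, $u_{ij}z\mapsto(u_{ij}\otimes g_1)g_2$, that the paper also uses; the detour through the triple free product is harmless but buys nothing for the one step that actually matters, namely injectivity. There the proposal has a genuine gap. You propose to build the inverse $\Xi$ ``directly on the generators'' of $(H\gluedtensor\C\Z_2)\gluedfree\C\Z_2$ and to verify ``relation by relation'' that the elements $u_{ij}g_1g_2$ satisfy the universal relations of the source. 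But for an arbitrary matrix Hopf algebra $H$ the glued product $(H\gluedtensor\C\Z_2)\gluedfree\C\Z_2$ is \emph{defined} as a Hopf subalgebra of $(H\otimes\C\Z_2)\ast\C\Z_2$ (\Cref{def-glued}); it is not given by generators and relations, so there is no presentation against which to check anything, and a prescription of $\Xi$ on generators has no a priori reason to be well defined. (Relation-checking does work in Gromada's setting and in \Cref{thm:H(F)=B(E)gluedCZ2} of this paper precisely because the specific algebras appearing there, such as $\mathcal H(F)$, are universal objects; that is not available for general $H$.)

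The paper closes exactly this gap with a concrete device that your sketch is missing: it defines, by the universal property of the \emph{ambient} free product, an algebra homomorphism $\psi\colon(H\otimes\C\Z_2)\ast\C\Z_2\to M_2(H\ast\C\Z)$ sending $a\otimes1\mapsto\operatorname{diag}(a,a)$, $1\otimes g_1\mapsto\left(\begin{smallmatrix}0&1\\1&0\end{smallmatrix}\right)$, $g_2\mapsto\left(\begin{smallmatrix}0&z^{-1}\\z&0\end{smallmatrix}\right)$; it then observes that $\psi$ maps the generators $S^k((u_{ij}\otimes g_1)g_2)$ of the glued product to the diagonal matrices $\operatorname{diag}(S^k(u_{ij}z),S^k(u_{ij}z^{-1}))$, so that the corner projection $\operatorname{pr}_1$ is multiplicative on the image of the glued product, and finally checks $\operatorname{pr}_1\circ\psi\circ\phi=\mathrm{id}$ on generators. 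Some such mechanism --- a homomorphism defined on the ambient free product whose restriction to the glued subalgebra yields the desired inverse --- is indispensable here; without it your argument establishes only surjectivity.
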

\begin{proof}
  We view $(\C\Z\cong\C[z,z^{-1}],z)$, $(\C\Z_2,s)$, and $(\C\Z_2,r)$ as matrix Hopf algebras; here $s$ and $r$ denote the canonical generator of $\C\Z_2$ in the first and second copy in $(H\gluedtensor\C\Z_2)\gluedfree\C\Z_2$, respectively. All antipodes are denoted by $S$ without distinction. First, we define a homomorphism $\phi\colon H\ast\C\Z\to (H\otimes\C\Z_2)\ast\C\Z_2$ via the assignments
  \[
    a\longmapsto a\otimes 1,\qquad z\longmapsto (1\otimes s)r,\qquad z^{-1}\longmapsto r(1\otimes s).
  \]
  One easily checks that $\phi$ is a Hopf algebra map. We find that
  \begin{align}\label{phi-on-generators}
    \phi(S^k(u_{ij}z))=S^k\phi(u_{ij}z)=S^k((u_{ij}\otimes s)r),
  \end{align}
  i.e.\ $\phi$ maps the generators of $H\gluedfree\C\Z$ to the generators of $(H\gluedtensor\C\Z_2)\gluedfree\C\Z_2$. In particular, $\phi$ restricts to a surjective Hopf algebra map $H\gluedfree\C\Z\to (H\gluedtensor\C\Z_2)\gluedfree\C\Z_2$.
  
  To prove injectivity of $\phi$, we will construct its inverse. We define an algebra homomorphism $\psi\colon (H\otimes\C\Z_2)\ast\C\Z_2\to M_2(H\ast\C\Z)$:
  \[
    a\otimes 1\longmapsto \begin{bmatrix} a & 0\\0 & a \end{bmatrix},\qquad 1\otimes s\longmapsto\begin{bmatrix}0 & 1\\1 & 0\end{bmatrix},\qquad 
    r\longmapsto \begin{bmatrix} 0 & z^{-1}\\z & 0\end{bmatrix}.
  \]
  On the generators $S^k((u_{ij}\otimes s)r)$ of $(H\gluedtensor\C\Z_2)\gluedfree\C\Z_2$, for even $k$, we find that 
  \[\psi(S^k((u_{ij}\otimes s)r))=
    \psi((S^k(u_{ij}) \otimes s)r) =
    \begin{pmatrix}
      S^k(u_{ij})z&0\\0&S^k(u_{ij})z^{-1}
    \end{pmatrix}
    =\begin{pmatrix}
      S^k(u_{ij}z)&0\\0&S^k(u_{ij}z^{-1})
    \end{pmatrix}
  \]
  and for odd $k$, we obtain that
  \[
    \psi(S^k((u_{ij}\otimes s)r))=\psi(r(S^k(u_{ij}) \otimes s)) =
    \begin{pmatrix}
      z^{-1}S^k(u_{ij})&0\\0&zS^k(u_{ij})
    \end{pmatrix}=
    \begin{pmatrix}
      S^k(u_{ij}z)&0\\0&S^k(u_{ij}z^{-1})
    \end{pmatrix}.
  \]
  Summarizing, we have that
  \begin{align}\label{psi-on-generators}
    \psi(S^k((u_{ij}\otimes s)r))
    =\begin{pmatrix}
      S^k(u_{ij}z)&0\\0&S^k(u_{ij}z^{-1})
    \end{pmatrix}
  \end{align}
  for all $k\in\mathbb N$. In particular, we note that $\psi((H\gluedtensor\C\Z_2)\gluedfree\C\Z_2)$ is contained in the diagonal matrices.
  Let $\operatorname{pr}_1\colon M_2(H\ast\C\Z)\to H\ast\C\Z$ be the projection onto the upper left corner. Then the restriction $\operatorname{pr}_1\circ\psi|_{(H\gluedtensor\C\Z_2)\gluedfree\C\Z_2}$ is an algebra homomorphism due to the observed containment of $\psi((H\gluedtensor\C\Z_2)\gluedfree\C\Z_2)$ in the diagonals.  Using \eqref{phi-on-generators} and \eqref{psi-on-generators}, it is apparent that
  \begin{align*}
    ((\operatorname{pr}_1\circ\psi)\circ\phi)(S^k(u_{ij}z))=\operatorname{pr}_1(\psi(S^k((u_{ij}\otimes s)r)))
    =S^k(u_{ij}z)
  \end{align*}
  and we conclude that  $(\operatorname{pr}_1\circ\psi)\circ\phi)|_{H\gluedfree\C\Z}=\mathrm{id}_{H\gluedfree\C\Z}$ because the maps involved are algebra homomorphisms. This shows that $\phi|_{H\gluedfree\C\Z}$ is injective and, hence, a Hopf isomorphism onto $(H\gluedtensor\C\Z_2)\gluedfree\C\Z_2$ as claimed.
\end{proof}

\begin{observation}
  Let $A$, $B$ be matrix Hopf algebras with fundamental corepresentations $u^A\in M_n(A), u^B\in M_m(B)$. If the antipodes of $A$ and $B$ restrict to linear automorphisms of $\operatorname{Lin}(u^A):=\operatorname{Lin}\{u^A_{ij}\colon i,j=1,\ldots,n\}$ and $\operatorname{Lin}(u^B):=\operatorname{Lin}\{u^B_{ij}\colon i,j=1,\ldots,m\}$, respectively, then $A\gluedfree B$ is generated as an algebra by the collection of all $u^A_{ij}u^B_{kl}$ and $u^B_{kl}u^A_{ij}$, $i,j\in\{1,\ldots, n\}, k,l\in\{1,\ldots, m\}$. Indeed, it easily follows from the assumption that
  \begin{align*}
    S(u^A_{xy}u^B_{zw})=S(u^B_{zw})S(u^A_{xy})&\in \operatorname{Lin}\{u^B_{kl}u^A_{ij}:i,j\in\{1,\ldots, n\}, k,l\in\{1,\ldots, m\}\}\\ u^B_{zw}u^A_{xy}= S(S^{-1}(u^A_{xy})S^{-1}(u^B_{zw}))&\in\operatorname{Lin} (S(u^A_{ij}u^B_{kl}):i,j\in\{1,\ldots, n\}, k,l\in\{1,\ldots, m\})
  \end{align*}
  and, therefore, the generated algebras agree.
\end{observation}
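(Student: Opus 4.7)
The plan is to establish both inclusions identifying the subalgebra $\mathcal C \subseteq A \free B$ generated by $\{u^A_{ij} u^B_{kl}\} \cup \{u^B_{kl} u^A_{ij}\}$ with $A \gluedfree B$.

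For the inclusion $\mathcal C \subseteq A \gluedfree B$, I would use that $A \gluedfree B$ is by definition a Hopf subalgebra of $A \free B$ and therefore stable under the antipode $S$. Since $S$ restricts to linear automorphisms of $\operatorname{Lin}(u^A)$ and $\operatorname{Lin}(u^B)$ by hypothesis, in particular these restrictions are invertible. The identity $u^B_{zw} u^A_{xy} = S\bigl(S^{-1}(u^A_{xy}) S^{-1}(u^B_{zw})\bigr)$, which uses only the antimultiplicativity of $S$ and the invertible restrictions, then exhibits each $u^B_{zw} u^A_{xy}$ as the image under $S$ of an element of $\operatorname{Lin}\{u^A_{ij} u^B_{kl}\} \subseteq A \gluedfree B$. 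Hence $u^B u^A \in A \gluedfree B$, yielding $\mathcal C \subseteq A \gluedfree B$.

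For the reverse inclusion $A \gluedfree B \subseteq \mathcal C$, the plan is to verify that $\mathcal C$ is itself a Hopf subalgebra of $A \free B$ and then invoke the minimality of $A \gluedfree B$ as the Hopf subalgebra generated by the $u^A_{ij} u^B_{kl}$. Closure under the coproduct is automatic because $u^A$ and $u^B$ are corepresentations, so $\Delta$ splits $u^A_{ij} u^B_{kl}$ (and symmetrically $u^B_{kl} u^A_{ij}$) into sums of tensors of like products, all of which lie in $\mathcal C$. Closure of $\mathcal C$ under $S$ uses the antimultiplicativity once more: $S(u^A_{xy} u^B_{zw}) = S(u^B_{zw}) S(u^A_{xy})$ lies in $\operatorname{Lin}\{u^B_{kl} u^A_{ij}\} \subseteq \mathcal C$ by the hypothesis on the restrictions of $S$, and analogously $S(u^B_{zw} u^A_{xy}) \in \operatorname{Lin}\{u^A_{ij} u^B_{kl}\} \subseteq \mathcal C$.

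The only non-formal point, and the sole place where the hypothesis on the antipode is really used, is keeping track of both directions simultaneously: $S$ maps $\operatorname{Lin}\{u^A u^B\}$ into $\operatorname{Lin}\{u^B u^A\}$ (needed for $\mathcal C$ to be $S$-stable) and bijectively so (needed to recover $u^B u^A$ as an $S$-image from within $A \gluedfree B$). Both follow at once from the assumption on $S|_{\operatorname{Lin}(u^A)}$ and $S|_{\operatorname{Lin}(u^B)}$. Once the two inclusions are in place, the observation follows.
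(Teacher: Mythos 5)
Your proposal is correct and rests on exactly the two antipode identities the paper uses: $S(u^A_{xy}u^B_{zw})=S(u^B_{zw})S(u^A_{xy})\in\operatorname{Lin}\{u^B_{kl}u^A_{ij}\}$ for one inclusion and $u^B_{zw}u^A_{xy}=S\bigl(S^{-1}(u^A_{xy})S^{-1}(u^B_{zw})\bigr)$ for the other. The only cosmetic difference is that you verify directly that $\mathcal C$ is a Hopf subalgebra (adding the routine $\Delta$-closure check) and invoke minimality of $A\gluedfree B$, whereas the paper packages the same argument via its earlier characterization of the generated Hopf subalgebra as the algebra generated by all iterated antipodes $S^k(u^A_{ij}u^B_{kl})$.
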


In \cite[Theoreme 1(iv)]{Banica1997} (see also \cite[Remark 6.18]{TerragoWeber2017}) Banica observed that  $U_n^+\cong O_n^+\gluedfree \mathbb Z$. In fact, Gromada \cite[Theorem C/4.28]{Gromada2022} showed that one can replace $\mathbb Z$ by 
$\mathbb Z_2$, using the fact that the degree of reflection of $O_n^+$ is 2. We will now generalize those observations to the matrix Hopf algebras $\mathcal B(E)$ and $\mathcal H(F)$.

Let us observe that $\mathcal B(E)$ and $\C\Z_2$ satisfy the assumption that the antipode restricts to linear automorphisms of $\operatorname{Lin}\{x_{ij}\colon i,j=1,\ldots,n\}$ and $\operatorname{Lin}\{g\}$, respectively. Indeed, in $\mathcal B(E)$ we have $S(x)=E^{-1}x^tE$ and in $\C\Z_2$ we have $S(g)=g$. Thus, $\mathcal B(E)\gluedfree \C\Z_2$ is generated as an algebra by the elements $x_{ij}g$ and $gx_{ij}$, $i,j\in\{1,\ldots,n\}$.

\begin{lemma}\label{lem:degree_of_reflection-B(E)}
  The Hopf algebra $\mathcal B(E)$ has the degree of reflection 2.
\end{lemma}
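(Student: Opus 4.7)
The plan is to verify both halves of the claim directly from the definition of degree of reflection: first, exhibit a Hopf algebra map $\sigma \colon \mathcal{B}(E) \to \C\Z_2$ sending $x_{ij} \mapsto \delta_{ij} g$; second, show no Hopf algebra map $\mathcal{B}(E) \to \C\Z_k$ with $k\geq 3$ sending $x_{ij}\mapsto \delta_{ij} g_k$ can exist.

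For the first step, I would observe that under the substitution $x = gI$ (viewed in $M_n(\C\Z_2)$), the matrix $x^t$ also equals $gI$, and since $g^2 = 1$, we get
\[ E^{-1} x^t E x = E^{-1}(gI)E(gI) = g^2\, E^{-1}E = I, \]
and similarly $xE^{-1}x^tE = I$. Hence the universal property of $\mathcal{B}(E)$ (as the algebra defined by the relations \eqref{eq_relations_in_BE}) produces a unital algebra homomorphism $\sigma \colon \mathcal{B}(E) \to \C\Z_2$ with $\sigma(x_{ij}) = \delta_{ij}g$. Compatibility with the coproduct and counit is straightforward: both $(\sigma\otimes\sigma)\Delta(x_{ij})$ and $\Delta_{\C\Z_2}(\sigma(x_{ij}))$ equal $\delta_{ij}\, g\otimes g$, and both $\varepsilon\sigma(x_{ij})$ and $\varepsilon(x_{ij})$ equal $\delta_{ij}$; since the $x_{ij}$ generate $\mathcal{B}(E)$ as an algebra, this proves $\sigma$ is a Hopf algebra map. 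Thus $(\mathcal{B}(E), x)$ is $2$-reflective.

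For the second step, suppose for contradiction that there exists a Hopf algebra map $\sigma_k \colon \mathcal{B}(E) \to \C\Z_k$ with $\sigma_k(x_{ij}) = \delta_{ij} g_k$ for some $k\geq 3$. Applying $\sigma_k$ entrywise to the relation $E^{-1}x^tEx = I$ and using that $\sigma_k(x) = g_k I$ and $\sigma_k(x^t) = g_k I$ commute with the scalar matrix $E$, we obtain $g_k^2 I = I$ in $M_n(\C\Z_k)$, i.e., $g_k^2 = 1$ in $\C\Z_k$. But $g_k$ has order exactly $k$ in $\C\Z_k$, so $k$ must divide $2$, contradicting $k\geq 3$.

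Combining both steps, the set of $k\in\N$ for which $(\mathcal{B}(E),x)$ is $k$-reflective is precisely $\{1,2\}$ (note that $k=1$ is trivial, corresponding to the counit). In particular this set is finite, so the degree of reflection is well-defined as its maximum, namely $2$. The only potential subtlety is the verification that $\sigma$ really descends from the free algebra on the $x_{ij}$ through the defining ideal, but this is immediate from the relations having been shown to hold in $\C\Z_2$; no deeper argument is required.
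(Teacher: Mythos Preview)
Your proof is correct and essentially identical to the paper's: both verify the defining relations of $\mathcal{B}(E)$ under $x\mapsto gI$ to produce the map to $\C\Z_2$, and both deduce from $E^{-1}x^tEx=I$ that any assignment $x_{ij}\mapsto\delta_{ij}a$ forces $a^2=1$, ruling out $k$-reflectivity for $k\geq 3$. The only difference is cosmetic---you frame the second step as a contradiction for $\C\Z_k$ specifically, while the paper states it for an arbitrary target algebra, but the argument is the same.
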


\begin{proof}
  Denote by $g$ the canonical generator of $\C\Z_2$ and $\hat x_{ij}:=\delta_{ij}g$, $\hat x=(\hat x_{ij})_{i,j=1}^n=I_ng\in M_n(\C\Z_2)$. Clearly,
  \begin{align*}
E^{-1}\hat x^tE\hat x=g^2I_n=I_n=\hat xE^{-1}\hat x^tE,
  \end{align*}
  which proves that there is a Hopf algebra map sending $x_{ij}$ to $\hat x_{ij}=\delta_{ij}g$, so $\mathcal B(E)$ is 2-reflective. On the other hand, let $A$ be an arbitrary unital algebra and $a\in A$. If $x_{ij}\mapsto \delta_{ij} a$ extends to a unital algebra homomorphism, it follows from the relations of $\mathcal B(E)$ that $a^2=1$. This shows that $\mathcal B(E)$ is not $k$-reflective for any $k>2$.    
\end{proof}

The following observation reveals a glued free product structure on $\mathcal H(F)$.
\begin{theorem}\label{thm:H(F)=B(E)gluedCZ2}
	For $F=E^tE^{-1}$ a generic asymmetry, we have $\mathcal H(F) \cong \mathcal B(E) \gluedfree \C\Z_2$.
\end{theorem}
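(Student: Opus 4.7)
The plan is to obtain the desired isomorphism through the chain
\[
\mathcal{H}(F)\cong \mathcal{B}(E)\gluedfree\C\Z\cong (\mathcal{B}(E)\gluedtensor\C\Z_2)\gluedfree\C\Z_2\cong \mathcal{B}(E)\gluedfree\C\Z_2,
\]
where the second isomorphism is \Cref{lem:gluedtensor-fluedfree-isomorphism} applied to $H=\mathcal{B}(E)$, and the third follows from \Cref{lem:glued-tensor-Z2} combined with the $2$-reflectiveness of $\mathcal{B}(E)$ established in \Cref{lem:degree_of_reflection-B(E)}. The whole content of the theorem thus reduces to producing the first isomorphism, which is the matrix-Hopf-algebraic analogue of Banica's classical identification $\Pol{U_n^+}\cong \Pol{O_n^+}\gluedfree\mathbb Z$ for the Kac case $E=I_n$.

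To realize $\mathcal{H}(F)\cong\mathcal{B}(E)\gluedfree\C\Z$, I would first build a Hopf algebra homomorphism $\phi\colon \mathcal{H}(F)\to\mathcal{B}(E)\free\C\Z$ by the matrix assignments $u\mapsto xz$ and $v\mapsto z^{-1}E^txE^{-t}$, where $z$ is the canonical grouplike of $\C\Z$. Routine matrix calculations, using that $z^{\pm1}$ commutes with any scalar matrix and that $E^{-1}x^tE=x^{-1}$ holds in $\mathcal{B}(E)$, verify the four defining relations of $\mathcal{H}(F)$: one has $\phi(v^t)=z^{-1}x^{-1}$, so $\phi(uv^t)=xx^{-1}=I$ and $\phi(v^tu)=z^{-1}z\cdot I=I$, while $\phi(Fu^tF^{-1})=E^tx^{-1}E^{-t}z$ yields $\phi(vFu^tF^{-1})=z^{-1}E^tE^{-t}z=I$ and similarly $\phi(Fu^tF^{-1}v)=E^tE^{-t}=I$. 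Compatibility with the coproduct on the matrix generators is immediate. By construction the image of $\phi$ is a Hopf subalgebra of $\mathcal B(E)\free\C\Z$ containing the entries $x_{ij}z=\phi(u_{ij})$, and hence equals $\mathcal{B}(E)\gluedfree\C\Z$ by definition of the latter.

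The main obstacle is injectivity, and this is where the genericity hypothesis enters. By \Cref{rem:generic-cosemisimple}, genericity of $F=E^tE^{-1}$ makes both $\mathcal{H}(F)$ and $\mathcal{B}(E)$ cosemisimple, hence so is the free product $\mathcal{B}(E)\free\C\Z$ and thus so is its Hopf subalgebra $\mathcal{B}(E)\gluedfree\C\Z$. In this cosemisimple setting it suffices to verify that $\phi$ induces an equivalence of corepresentation categories. My plan is to argue that $\phi$ matches the pair $(u,v)$ of fundamental and contragredient corepresentations of $\mathcal{H}(F)$ with the pair $(xz,(xz)^{-t})$ of generating corepresentation and its contragredient in $\mathcal{B}(E)\gluedfree\C\Z$, and that for generic $F$ these two pairs play identical combinatorial roles: by Bichon's description the simple comodules of $\mathcal{H}(F)$ are parametrized by the free monoid on $\{u,v\}$ with free-fusion rules, while a Gromada-type analysis of $\mathcal{B}(E)\gluedfree\C\Z$ (the matrix-Hopf-algebra adaptation of the compact quantum group computation) gives the same parametrization in terms of alternating words in $xz$ and $(xz)^{-t}$. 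Matching Hilbert series (equivalently, matching multiplicities of each simple in iterated tensor products of the fundamental corepresentation) then forces the surjection $\phi$ to be injective, completing the proof.
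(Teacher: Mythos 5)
Up to the injectivity question, your proposal is the paper's own proof: the same reduction $\mathcal B(E)\gluedfree\C\Z_2\cong(\mathcal B(E)\gluedtensor\C\Z_2)\gluedfree\C\Z_2\cong\mathcal B(E)\gluedfree\C\Z$ via \Cref{lem:glued-tensor-Z2,lem:gluedtensor-fluedfree-isomorphism,lem:degree_of_reflection-B(E)}, the same map $u\mapsto xz$, $v\mapsto z^{-1}E^txE^{-t}$, the same relation checks, surjectivity onto the glued Hopf subalgebra, and the appeal to cosemisimplicity (\Cref{rem:generic-cosemisimple}) together with a corepresentation-theoretic criterion for a surjection to be injective. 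One small imprecision: what is needed is not that $\phi$ induce an equivalence of comodule categories (monoidally equivalent Hopf algebras are typically non-isomorphic), but that it induce an isomorphism of corepresentation semirings $R^+$, which is \cite[Lemma 5.1]{Bichon03}, the criterion the paper actually uses.

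The genuine gap is the step you delegate to a ``Gromada-type analysis'': the claim that the simple comodules of $\mathcal B(E)\gluedfree\C\Z$ are parametrized by alternating words in $xz$ and $(xz)^{-t}$ with free-unitary fusion rules. Gromada's computation \cite{Gromada2022} (and Banica's identification $U_n^+\cong O_n^+\gluedfree\Z$ in \cite{Banica1997}) lives in the compact quantum group world, where one has CQG-algebras, Haar states and unitarizability; for a generic but non-positive asymmetry $F=E^tE^{-1}$ the Hopf algebras $\mathcal B(E)$ and $\mathcal B(E)\free\C\Z$ are cosemisimple but in general not CQG-algebras, so none of these results can be quoted, and the whole analysis --- which simple comodules of the free product lie in the glued subalgebra, that they exhaust its corepresentation theory, and that their fusion is of free-unitary type --- would have to be redone at the level of cosemisimple Hopf algebras. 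Moreover, even granting abstractly isomorphic fusion semirings on both sides, you must still verify that your particular surjection $\phi$ carries simples to simples and induces the isomorphism; your ``matching Hilbert series'' remark presupposes exactly the unestablished fusion data for the glued side. The paper avoids this entirely: it transports the problem through the monoidal equivalences $\mathcal H(F)\sim\mathcal H(F_q)$ and $\mathcal B(E)\sim\mathcal B(E_q)\cong\operatorname{Pol}(SL_q(2))$ for generic parameters, uses the known isomorphism $\mathcal H(F_q)\cong\operatorname{Pol}(SL_q(2))\gluedfree\C\Z$ of \cite[Lemma 3.3]{Bichon07} to get $R^+(\mathcal H(F))\cong R^+(\mathcal B(E)\gluedfree\C\Z)$, and then checks on the generators ($u,v$ for $R^+(\mathcal H(F))$, $x$ for $R^+(\mathcal B(E))$) that $\Phi$ induces this isomorphism. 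If you wish to keep your route, you must actually prove the Hopf-algebraic analogue of Gromada's fusion computation for $\mathcal B(E)\gluedfree\C\Z$; as written, the injectivity of $\phi$ is unsupported.
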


\begin{proof}
  From \Cref{lem:glued-tensor-Z2,lem:gluedtensor-fluedfree-isomorphism,lem:degree_of_reflection-B(E)}, we conclude that $\mathcal B(E) \gluedfree \C\Z_2\cong  \mathcal B(E) \gluedfree \C\Z$. Denote the canonical generator of $\C\Z$ by $z$.
  Put $\hat u_{ij}:= x_{ij}z, \hat v_{ij}:=S(\hat u_{ji})=z^{-1} S(x_{ji})\in  \mathcal B(E) \gluedfree \C\Z$. In matrix notation,
  \[\hat u=xz, \quad\hat u^t=x^tz,\quad \hat v=z^{-1}E^{t}x  E^{-t} ,\quad\hat v^t=z^{-1}E^{-1}x^{t}E.\] We claim that there is a (necessarily unique) algebra homomorphism $\Phi\colon \mathcal H(F) \to \mathcal B(E) \gluedfree \C\Z$ with $\Phi(u_{ij})=\hat u_{ij}$ and $\Phi(v_{ij})=\hat v_{ij}$ for all $i,j\in\{1,\ldots, n\}$. By definition of $\mathcal H(F)$, it is enough to check the following relations (note that $z$ commutes with scalar matrices):
  \begin{align*}
	\hat u\hat v^t & = xz\,z^{-1}E^{-1}x^tE = I\\
	\hat v^t\hat u & = z^{-1}E^{-1}x^tE\,xz = I \\
	\hat vF\hat u^tF^{-1} & = z^{-1}E^tx E^{-t}\, E^tE^{-1}\, x^t z\,E E^{-t}
	 = z^{-1}E^t z E^{-t} = I \\ 
	F\hat u^tF^{-1}\hat v & = E^tE^{-1}\, x^t z\, E E^{-t}\,z^{-1} E^tx E^{-t} 
	= E^t(E^{-1} x^t Ex) E^{-t} = E^t  E^{-t} =I.
  \end{align*}
  The assumption of genericity implies that both $\mathcal H(F)$ and $\mathcal B(E)$ are cosemisimple (see Preliminaries). 
  In order to show that $\Phi$ is an isomorphism, we invoke the theory of cosemisimple Hopf algebras. By \cite[Lemma 5.1]{Bichon03}, if $f\colon A\to B$ is a Hopf algebra morphism between cosemisimple Hopf algebras $A,B$ which induces an isomorphism of their respective representation semirings $R^+(A)\cong R^+(B)$, then $\Phi$ is a Hopf algebra isomorphism. Let $q$ be the solution of the equation $q^2-\tr(F)q+1=0$. We know from \cite[Theorem 1.1 (i)]{Bichon07} that for generic $F$, $\mathcal H(F)$ is monoidaly equivalent to $\mathcal H(F_q)$, where $F_q=\begin{pmatrix} q^{-1} & 0 \\ 0 & q \end{pmatrix}$. Next, by~\cite[Theorem~1.1]{Bichon03}, $\mathcal{B}(E)$ is monoidaly equivalent to $\mathcal{B}(E_q)\cong\operatorname{Pol}(SL_q(2))$, where $E_q=\begin{pmatrix} 0 & 1 \\ q^{-1} & 0 \end{pmatrix}$. The isomorphism of~\cite[Lemma~3.3]{Bichon07} induces an isomorphism of Hopf algebras $\mathcal{H}(F_q)\cong \operatorname{Pol}(SL_q(2))\gluedfree \mathbb{C}\mathbb{Z}$. We infer that $R^+(\mathcal{H}(F))\cong R^+(\mathcal{B}(E)\gluedfree\mathbb{C}\mathbb{Z})$ as semirings. From \cite[Theorem 1.1 (iii)]{Bichon07}, we know that $R^+(\mathcal H(F))$ is generated by $u$ and $v$. From the construction of the monoidal equivalence of $\mathcal{B}(E)$ and $\mathcal{B}(E_q)$ (see e.g.~\cite{Schauenburg96}) and the representation theory of $SL_q(2)$ (see e.g.~\cite[p.~4845]{Bichon03}), we obtain that $R^+(\mathcal B(E))$ is generated by $x$. Therefore, $\Phi$ induces the aforementioned isomorphism of semirings.
\end{proof}

\begin{remark}
    In a \cite[Prop.~4.3]{Bichon23pre}, Bichon shows that \Cref{thm:H(F)=B(E)gluedCZ2} holds without the genericity assumption.
\end{remark}

In the sequel we will identify $\mathcal H(F)$ with $\mathcal B(E)\gluedfree\C\Z_2$, writing (in the matrix notation) just $u=xg$ and $v=gE^tx E^{-t}$. Then it follows that $u^t=x^t g$, $v^t=gE^{-1}x^tE$ and 
\begin{equation} \label{eq_gx}
	gx =  E^{-t}vE^t \quad \mbox{and} \quad gx^t = Ev^tE^{-1}.
\end{equation}

\section{Free resolution for \texorpdfstring{$\mathcal H(F)$}{H(F)}}
\label{sec:resolution-H(F)}

\begin{proposition}
  \label{prop:A=H+gH_general}
  Consider $\mathcal A(E)$ as a right $\mathcal H(F)$-module, with the canonical module structure induced by the embedding $\mathcal H(F)\subset \mathcal A(E)$, i.e.\ $a.h:=ah\in\mathcal A(E)$ for all $a\in \mathcal A(E), h\in\mathcal H(F)$.  Then $\mathcal H(F),g\mathcal H(F)\subset\mathcal A(E)$ are submodules and 
  \[\mathcal A(E)= \mathcal H(F) \oplus g \mathcal H(F).\]
\end{proposition}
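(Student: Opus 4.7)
\medskip

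The plan is to exploit a natural $\mathbb Z_2$-grading on $\mathcal A(E)$. Recall that $\mathcal B(E)$ is $\mathbb Z_2$-graded with the generators $x_{ij}$ in odd degree, since the defining relations \eqref{eq_relations_in_BE} are homogeneous of even degree; this yields an involutive algebra automorphism $\beta\colon\mathcal B(E)\to\mathcal B(E)$ with $\beta(x_{ij})=-x_{ij}$. Similarly, $\gamma\colon\C\Z_2\to\C\Z_2$, $\gamma(g):=-g$, is an involutive algebra automorphism because $(-g)^2=g^2=1$. By the universal property of the free product, these combine to an involutive algebra automorphism $\alpha\colon\mathcal A(E)\to\mathcal A(E)$ with $\alpha(x_{ij})=-x_{ij}$ and $\alpha(g)=-g$; write $\mathcal A(E)_{0}$ and $\mathcal A(E)_1$ for its $(+1)$- and $(-1)$-eigenspaces.

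The first main step is to verify that $\mathcal H(F)\subset \mathcal A(E)_0$ and $g\mathcal H(F)\subset \mathcal A(E)_1$. Since $\mathcal H(F)=\mathcal B(E)\gluedfree\C\Z_2$ is the algebra generated by $x_{ij}g$ and $gx_{ij}$, and $\alpha$ sends each of these generators to $(-x_{ij})(-g)=x_{ij}g$ and $(-g)(-x_{ij})=gx_{ij}$, we have $\alpha|_{\mathcal H(F)}=\mathrm{id}$. Consequently $\alpha(gh)=-gh$ for $h\in\mathcal H(F)$, giving the second inclusion.

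The second step is to show that $\mathcal H(F)+g\mathcal H(F)=\mathcal A(E)$. The subspace contains $1$ and is closed under left multiplication by the generators $x_{ij}$ and $g$ of $\mathcal A(E)$: for $h\in\mathcal H(F)$,
\begin{align*}
g\cdot h&=gh\in g\mathcal H(F), & g\cdot(gh)&=h\in\mathcal H(F),\\
x_{ij}\cdot h&=g\bigl((gx_{ij})h\bigr)\in g\mathcal H(F), & x_{ij}\cdot(gh)&=(x_{ij}g)h\in\mathcal H(F),
\end{align*}
where we use $g^2=1$ and the fact that $x_{ij}g,gx_{ij}\in\mathcal H(F)$. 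Hence the sum exhausts $\mathcal A(E)$. Directness of the sum follows immediately from the first step: if $h_1+gh_2=0$ with $h_1,h_2\in\mathcal H(F)$, applying $\alpha$ yields $h_1-gh_2=0$, so $h_1=0$ and then $gh_2=0$; multiplying by $g$ on the left gives $h_2=0$.

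Finally, both summands are right $\mathcal H(F)$-submodules of $\mathcal A(E)$: $\mathcal H(F)$ is a subalgebra, and $(gh)h'=g(hh')\in g\mathcal H(F)$ for $h,h'\in\mathcal H(F)$. This establishes the claimed decomposition. The only mildly delicate point is identifying the right algebra automorphism implementing the grading; once $\alpha$ is introduced, the whole argument reduces to elementary manipulations.
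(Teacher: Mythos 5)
Your proof is correct and follows essentially the same route as the paper's: an induction on word length (packaged as closure of $\mathcal H(F)+g\mathcal H(F)$ under left multiplication by the generators $x_{ij}$ and $g$) for the spanning statement, and the natural $\Z_2$-grading of the free product for directness. The only difference is cosmetic but welcome: you realize the grading through an explicit involutive automorphism $\alpha$, which makes the directness argument slightly more self-contained than the paper's appeal to parity of monomial length.
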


\begin{proof}
  Any element of the $\mathcal A(E)=\mathcal B(E) \ast \C\Z_2$ is a linear combination of \emph{monomials}, i.e.\ products of generators $x_{jk}$ from $\mathcal B(E)$ and $g\in \C\Z_2$.
  
	Take any such monomial $w$, and assume that its length (the number of generators of both kinds) is $k$. For $k=0$ we have $w=1=1\oplus g\cdot 0$. If $k=1$ then either 
	$w=x_{ij}=0\oplus g(gx_{ij})$ or $w=g=0\oplus  g1$. Now assume that any element $w$ of length $\leq k$ can be 
	written as $w=a\oplus gb$ with $a,b\in \mathcal H(F)$. Consider an element $w'$ of length $k+1$. We have either $w'=gw=g(a\oplus gb) = b\oplus ga\in \mathcal H(F) \oplus g\mathcal H(F)$ or $$w'=x_{ij}w=g(gx_{ij})(a\oplus gb) = 
	g(gx_{ij})a\oplus g^2(x_{ij}g)b= \widetilde{b} \oplus g\widetilde{a},$$ 
	with $\widetilde{b}=(u_{ij}g)b, \widetilde{a}=(gu_{ij})a\in \mathcal H(F)$. 
	
	We still need to show that $ \mathcal H(F) \cap g \mathcal H(F)=\{0\}$. This is due to the fact that $\mathcal A(E)$ is a $\Z_2$-graded algebra, the free product of $\Z_2$-graded algebras $\mathcal B(E)$ and $\C\Z_2$. Elements of $\mathcal H(F)$ are combinations of monomials of even length, whereas elements of $g\mathcal H(F)$ are made up from monomials of odd length.
\end{proof}

\begin{theorem}\label{thm:resHF}
Let $F=E^tE^{-1}\in \operatorname{GL}_n(\C)$ be a generic asymmetry and $\mathcal H=\mathcal H(F)$. Then the following sequence 
\begin{equation} \label{eq:resHF}
	0\to \begin{pmatrix} \mathcal H \\ \mathcal H \end{pmatrix} 
	\xrightarrow{\Phi^{\mathcal H}_3} 
	\begin{pmatrix} M_n(\mathcal H) \\ M_n(\mathcal H) \end{pmatrix} 
	\xrightarrow{\Phi^{\mathcal H}_2}
	\begin{pmatrix} M_n(\mathcal H) \\ M_n(\mathcal H)
		\\ \mathcal H \end{pmatrix} \xrightarrow{\Phi^{\mathcal H}_1}
	\begin{pmatrix} \mathcal H \\ \mathcal H \end{pmatrix} \xrightarrow{\varepsilon}\mathbb{C}_\varepsilon\to0    
\end{equation}
with the mappings
\begin{align*}
	\Phi^{\mathcal H}_3 \begin{pmatrix}a \\ b \end{pmatrix} 
	& = \begin{pmatrix} -Fa+(Eu E^{-t})b \\ F^{-1}va-Fb
	\end{pmatrix},\\
	\Phi^{\mathcal H}_2  \begin{pmatrix} A \\ B \end{pmatrix} 
	& = \begin{pmatrix} A+ (E^{-1}u^tBE)^t \\ B+(v^tE^{-1}AE)^t \\0  \end{pmatrix},\\ 
	\Phi^{\mathcal H}_1 \begin{pmatrix} A \\ B \\c \end{pmatrix} 
	& = \begin{pmatrix} \operatorname{tr} (-A+u^tB) +c \\ \operatorname{tr} (Ev^tE^{-1}A-B)-c  \end{pmatrix},
\end{align*}
for $a,b,c\in \mathcal H$, $A,B\in M_n(\mathcal H)$,
yields a projective resolution of the counit of $\mathcal H$.
\end{theorem}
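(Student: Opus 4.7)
The plan is to restrict the projective resolution~\eqref{eq_resolution_of_A} of $\C_\varepsilon$ from $\mathcal A(E)$-modules to $\mathcal H(F)$-modules, using the direct-sum decomposition $\mathcal A(E)=\mathcal H(F)\oplus g\mathcal H(F)$ from \Cref{prop:A=H+gH_general}. This decomposition says that $\mathcal A$ is a free right $\mathcal H$-module of rank $2$ with basis $\{1,g\}$, so the restriction functor $\mathcal M_{\mathcal A}\to\mathcal M_{\mathcal H}$ is exact and sends a free $\mathcal A$-module of rank $k$ to a free $\mathcal H$-module of rank $2k$. Thus exactness and freeness of the restricted complex are automatic, and the task reduces to identifying the modules and matching the restricted differentials with those in \eqref{eq:resHF}.

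The first step is to identify each of the three module types appearing in \eqref{eq_resolution_of_A}. The isomorphisms $\mathcal A\cong\binom{\mathcal H}{\mathcal H}$ and $M_n(\mathcal A)\cong\binom{M_n(\mathcal H)}{M_n(\mathcal H)}$, via $(h_1,h_2)\leftrightarrow h_1+gh_2$ (extended entry-wise in the matrix case), are immediate from \Cref{prop:A=H+gH_general}. For the third summand I would show that $\C(1-g)\otimes_{\C\Z_2}\mathcal A\cong\mathcal H$ as a right $\mathcal H$-module, via $(1-g)\otimes h\leftrightarrow h$: since left multiplication by $g$ on $\mathcal A$ swaps $\mathcal H$ and $g\mathcal H$, $\mathcal A$ is free as a left $\C\Z_2$-module with generating set (identifiable with) $\mathcal H$, so the tensor product collapses to $\C(1-g)\otimes\mathcal H\cong\mathcal H$. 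In particular, the restricted resolution is \emph{free} over $\mathcal H$, not merely projective.

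The core computation is to match the restricted differentials. The main tools are the identity $x=ug$ in $\mathcal A$ (from $u=xg$), the fact that scalar matrices such as $E$, $E^{-t}$, $F$ commute with $g$, and the algebra automorphism $\alpha(h):=ghg$ of $\mathcal H$, which satisfies $\alpha(u)=gx=E^{-t}vE^t$ by~\eqref{eq_gx}. As a sample calculation,
\begin{align*}
\Phi^{\mathcal A}_3(h_1+gh_2)
&=EuE^{-t}g\,(h_1+gh_2)-F(h_1+gh_2)\\
&=\bigl[EuE^{-t}h_2-Fh_1\bigr]+g\bigl[\alpha(EuE^{-t})h_1-Fh_2\bigr],
\end{align*}
and $\alpha(EuE^{-t})=E\alpha(u)E^{-t}=EE^{-t}vE^tE^{-t}=F^{-1}v$ (using $F^{-1}=EE^{-t}$ together with the trivial identity $E^tE^{-t}=I$), which reproduces exactly the formula for $\Phi^{\mathcal H}_3$. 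Analogous but somewhat longer bookkeeping for $\Phi^{\mathcal A}_2$ and $\Phi^{\mathcal A}_1$ yields $\Phi^{\mathcal H}_2$ and $\Phi^{\mathcal H}_1$; in particular, the fact that the $\C(1-g)\otimes_{\C\Z_2}\mathcal A$-component of $\Phi^{\mathcal A}_2$ is zero accounts for the $0$ in the third entry of $\Phi^{\mathcal H}_2$.

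The main obstacle is purely the bookkeeping in this last step: each time an entry of the matrix $u$ crosses past $g$ one picks up an application of $\alpha$ and so the two summands $\mathcal H$ and $g\mathcal H$ are swapped, while scalar matrix entries slide through unchanged. Keeping the decomposition $\mathcal A=\mathcal H\oplus g\mathcal H$ straight in each differential and correctly attributing each term to its summand is the only non-formal ingredient; all other parts (exactness, freeness, module identifications) are formal consequences of $\mathcal A$ being free of rank $2$ over $\mathcal H$.
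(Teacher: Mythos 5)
Your proposal is correct and follows essentially the same route as the paper: restrict the resolution \eqref{eq_resolution_of_A} along the rank-two free module decomposition $\mathcal A=\mathcal H\oplus g\mathcal H$ of \Cref{prop:A=H+gH_general}, identify $\C(1-g)\otimes_{\C\Z_2}\mathcal A$ with a free rank-one $\mathcal H$-module, and rewrite the differentials in these bases using $u=xg$ and \eqref{eq_gx}. Your sample computation for $\Phi^{\mathcal H}_3$ (via the automorphism $h\mapsto ghg$) matches the paper's, and the remaining two differentials are handled there by the same kind of bookkeeping you indicate.
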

\begin{remark}
    It is worth noticing that the given resolution is in fact free.
\end{remark}

\begin{proof}
We know from Section \ref{sec_glued}, with $F=E^tE^{-1}$, that 
\begin{equation} \label{eq_glued-free-product}
	\mathcal H(F)=\mathcal B(E) \gluedfree \mathbb C\mathbb Z_2 
	\subset \mathcal B(E) * \mathbb C\mathbb Z_2 = \mathcal A(E).
      \end{equation}
      is a Hopf subalgebra.

Our plan is to interpret the projective modules in the resolution \eqref{eq_resolution_of_A} as $\mathcal H(F)$-modules by restricting the action, to identify bases which will show that they are actually free $\mathcal H(F)$-modules, and finally to express the maps $\widetilde\Phi_i$ in terms of the chosen bases. 

To simplify the notation, we write $\mathcal H=\mathcal H(F)$ 
and $\mathcal A=\mathcal A(E)$ with $F=E^tE^{-1}$. Then Proposition \ref{prop:A=H+gH_general} 
states that $\mathcal A=\mathcal H\oplus g\mathcal H$ is a free $\mathcal H$-module with basis $(1,g)$. Consequently, $M_n(\mathcal A) \cong M_n(\mathcal H) \oplus g M_n(\mathcal H)$ is a free $\mathcal H$-module with basis $(e_{ij},ge_{ij}:i,j=1,\ldots n)$, 
where we naturally set $g(a_{ij})_{i,j+1}^n := (ga_{ij})_{i,j+1}^n$. Moreover, 
since for $a,b\in \mathcal H$, we have
\[
  (1-g)\otimes (a+gb)= (1-g)\otimes a + (1-g)g\otimes b = (1-g)\otimes (a-b)= 1\otimes (1-g) (a-b),
\]
the mapping $(1-g)\otimes h\mapsto (1-g)h$ constitutes an isomorphism $\mathbb{C}(1-g)\otimes_{\mathbb{C}\mathbb{Z}_2}\mathcal A\cong (1-g) \mathcal H$ to a free $\mathcal H$-module with basis $(1-g)$ (we use this isomorphism to identify those expressions in the following). 
This allows us to write down the resolution \eqref{eq_resolution_of_A} in terms of free $\mathcal H$-modules:
\begin{equation*}
	0\to \mathcal H\oplus g\mathcal H \xrightarrow{\Phi^{\mathcal A}_3} 
	M_n(\mathcal H) \oplus  gM_n(\mathcal H)
	\xrightarrow{\Phi^{\mathcal A}_2}
	M_n(\mathcal H) \oplus  gM_n(\mathcal H) \oplus (1-g) \mathcal H \xrightarrow{\Phi^{\mathcal A}_1} 
	\mathcal H \oplus g\mathcal H 
	\xrightarrow{\varepsilon}\mathbb{C}_\varepsilon\to0.
\end{equation*}

Let us now see how the action of the respective mappings 
can be expressed in the chosen bases. It will be convenient to use the matrix notation with $I=(\delta_{jk}1_{\mathcal H})_{j,k=1}^n$, $x:=(x_{jk})_{j,k=1}^n$ $u:=xg=(x_{jk}g)_{j,k=1}^n$, $v:=gE^tx E^{-t}$. Recall that $g$ commutes with scalar matrices and $g^2=1$.
For $a,b\in \mathcal H$,
\begin{align*}
                                    \Phi^{\mathcal A}_3(a\oplus gb)
	&=\bigl(Ex E^{-t}-F\bigr)(a\oplus gb)\\
	& =(-Fa+Ex E^{-t}gb)\oplus(Ex E^{-t}a-Fgb)\\
	& =(-Fa+Eu E^{-t}b)\oplus g(E E^{-t}gE^tx E^{-t}a-Fb)\\
	& = (-Fa+(Eu E^{-t})b) \oplus g(F^{-1}va-Fb).
\end{align*}

For $A,B\in M_n(\mathcal H)$, using that $v:=gE^tx E^{-t}$ is equivalent to $gE^{-1}x^t=v^tE^{-1}$, we have that
\begin{align*}
  \Phi^{\mathcal A}_2(A\oplus gB)
  &=A+gB+(E^{-1}x^t(A+gB)E)^t\\
  &=(A+(E^{-1}x^tgBE)^t)\oplus ((E^{-1}x^tAE)^t+gB)\oplus 0\\
  &=(A+(E^{-1}u^tBE)^t)\oplus g((gE^{-1}x^tAE)^t+B)\oplus 0\\
  &=(A+(E^{-1}u^tBE)^t)\oplus g((v^tE^{-1}AE)^t+B)\oplus 0.  
\end{align*}

Finally, for $A,B\in M_n(\mathcal H)$ and $c\in \mathcal H$, we have
\begin{align*}
  \Phi^{\mathcal A}_1 (A\oplus gB  \oplus (1-g) c) &=  \tr(x^t(A+gB)-\tr(A+gB)+(1-g)c\\
  & = (\tr (-A+x^tgB) +c) \oplus ( \tr (x^tA-gB)- gc) \\
  &=(\tr (-A+u^tB) +c) \oplus g( \tr (gx^tA-B)- c) \\
  &=(\tr (-A+u^tB) +c) \oplus g( \tr (Ev^tE^{-1}A-B)- c),
\end{align*}
this time using the fact that $v:=gE^tx E^{-t}$ is equivalent to $gx^t=Ev^tE^{-1}$.

Now, let us observe that the bases described above give rise to $\mathcal H$-module isomorphisms:
\begin{gather*}
  \mathcal H \oplus g \mathcal H \ni a\oplus gb \mapsto \begin{pmatrix} a \\ b \end{pmatrix} \in \begin{pmatrix}\mathcal H  \\ \mathcal H \end{pmatrix},\\
  M_n(\mathcal H)\oplus gM_n(\mathcal H)\ni A\oplus gB \mapsto \begin{pmatrix} A \\ B  \end{pmatrix} \in \begin{pmatrix} M_n(\mathcal H) \\ M_n(\mathcal H)\end{pmatrix}\\
  M_n(\mathcal H)\oplus gM_n(\mathcal H)\oplus (1-g)\mathcal H\ni A\oplus gB \oplus (1-g)h \mapsto \begin{pmatrix} A \\ B \\ h \end{pmatrix} \in \begin{pmatrix} M_n(\mathcal H) \\ M_n(\mathcal H) \\ \mathcal H\end{pmatrix}
\end{gather*}
and the maps $\Phi^{\mathcal H}_*$ are simply the maps $\Phi^{\mathcal A}_*$ rewritten in terms of the bases. In particular, the sequence \eqref{eq:resHF} is exact.
\end{proof}

\begin{lemma} \label{lem-res-with-F-only}
    The maps $f_E,T\colon M_n(\mathcal H)\to M_n(\mathcal H)$ with
    \begin{align*}
        f_E(A)&=E^{-1}AE&T(A)=A^t
    \end{align*}
    are $\mathcal H$-module automorphisms. These lead to a transformed version of the resolution \eqref{eq:resHF} in \Cref{thm:resHF} only depending on $F$:
    \begin{equation} 
	\label{eq:resHF_transformed}
	0\to \begin{pmatrix} \mathcal H \\ \mathcal H \end{pmatrix} 
	\xrightarrow{\Psi^{\mathcal H}_3} 
	\begin{pmatrix} M_n(\mathcal H) \\ M_n(\mathcal H) \end{pmatrix} 
	\xrightarrow{\Psi^{\mathcal H}_2}
	\begin{pmatrix} M_n(\mathcal H) \\ M_n(\mathcal H)
		\\ \mathcal H \end{pmatrix} \xrightarrow{\Psi^{\mathcal H}_1}
	\begin{pmatrix} \mathcal H \\ \mathcal H \end{pmatrix} \xrightarrow{\varepsilon}\mathbb{C}_\varepsilon\to0   
    \end{equation}
    with 
    \begin{align*}
        \Psi_3^{\mathcal H}\begin{pmatrix} a \\ b \end{pmatrix}&=\begin{pmatrix} f_E \\ \mathrm{id} \end{pmatrix}\circ  \Phi_3^{\mathcal H}\begin{pmatrix} a \\ b \end{pmatrix} = 
        \begin{pmatrix}
            -F^{-t} a + u F^tb\\ F^{-1} va - Fb
        \end{pmatrix},
        \\
        \Psi_2^{\mathcal H}\begin{pmatrix} A \\ B \end{pmatrix}&=\begin{pmatrix} T f_E \\ T \\ \mathrm{id} \end{pmatrix}\circ  \Phi_2^{\mathcal H}\circ \begin{pmatrix} f_E^{-1} \\ \mathrm{id} \end{pmatrix}\begin{pmatrix} A \\ B \end{pmatrix}
        =\begin{pmatrix} A^t + Fu^tBF^{-1} \\  v^t A + B^t \\ 0 \end{pmatrix},
        \\
        \Psi_1^{\mathcal H}\begin{pmatrix} A \\ B \\ c\end{pmatrix}&=  \Phi_1^{\mathcal H}\circ \begin{pmatrix} f_E^{-1}T \\ T \\ \mathrm{id} \end{pmatrix}\begin{pmatrix} A \\ B \\ c \end{pmatrix}
        =\begin{pmatrix} \tr(-A+uB)+c \\ \tr(vA-B)-c \end{pmatrix}.
    \end{align*}
\end{lemma}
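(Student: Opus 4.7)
The plan is to carry out three steps in order: verify that $f_E$ and $T$ are $\mathcal H$-module automorphisms of $M_n(\mathcal H)$; deduce exactness of \eqref{eq:resHF_transformed} formally from \Cref{thm:resHF}; then compute the compositions to obtain the stated formulas for $\Psi^{\mathcal H}_*$. Since the right $\mathcal H$-action on $M_n(\mathcal H)$ is entrywise and $E,E^{-1}\in M_n(\C)$ have scalar entries commuting with $\mathcal H$, one has $E^{-1}(Ah)E=(E^{-1}AE)h$, so $f_E$ is $\mathcal H$-linear with inverse $A\mapsto EAE^{-1}$; and $T(Ah)_{ij}=A_{ji}h=(T(A)h)_{ij}$, so $T$ is $\mathcal H$-linear and involutive. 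Because all vertical maps in the commuting diagram relating $\Phi^{\mathcal H}_*$ and $\Psi^{\mathcal H}_*$ are $\mathcal H$-module isomorphisms, exactness transports and \eqref{eq:resHF_transformed} is a projective resolution.

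The only real work is the direct substitution to arrive at the given formulas. Two elementary observations will streamline things: $E^{-1}FE=E^{-1}E^tE^{-1}E=F^{-t}$ and $uE^{-t}E=uF^t$; and, for scalar conjugation, $\tr(EXE^{-1})=\tr(X)$ for all $X\in M_n(\mathcal H)$ (by the trace being a sum of diagonal entries combined with $\sum_i(E^{-1})_{ki}E_{ij}=\delta_{kj}$). One further must recognize that $\tr(X^t Y^t)=\tr(YX)$ fails in the noncommutative setting; what remains true by direct reindexing is $\tr(u^tB^t)=\sum_{i,k}u_{ki}B_{ik}=\tr(uB)$ and similarly $\tr(v^tA^t)=\tr(vA)$.

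With these facts in hand the computation of $\Psi^{\mathcal H}_3$ is a one-line substitution: apply $f_E$ to the top entry of $\Phi^{\mathcal H}_3\binom{a}{b}=\binom{-Fa+EuE^{-t}b}{F^{-1}va-Fb}$, using $E^{-1}FE=F^{-t}$ and $E^{-t}E=F^t$, while the bottom entry is unchanged. For $\Psi^{\mathcal H}_2$, first replace $A$ by $EAE^{-1}$ (via $f_E^{-1}$), then evaluate $\Phi^{\mathcal H}_2$ using the identities $E^{-1}u^tBE$ untouched and $v^tE^{-1}(EAE^{-1})E=v^tA$, then apply $Tf_E$ and $T$; the simplifications $T(EAE^{-1})=A^t$ (after $f_E$) and $E^t(E^{-1}u^tBE)E^{-t}=Fu^tBF^{-1}$ produce the announced formula, and the third component remains zero. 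For $\Psi^{\mathcal H}_1$, precompose with $(f_E^{-1}T,T,\mathrm{id})$ to obtain $A'=EA^tE^{-1}$ and $B'=B^t$, then compute $\tr(-A'+u^tB')+c=-\tr(A)+\tr(uB)+c$ using trace-invariance under scalar conjugation together with $\tr(u^tB^t)=\tr(uB)$, and likewise $\tr(Ev^tE^{-1}A'-B')-c=\tr(v^tA^t)-\tr(B)-c=\tr(vA-B)-c$.

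The \emph{hard} part is in fact only conceptual rather than technical: namely remembering that $(XY)^t\ne Y^tX^t$ when entries do not commute, so one must verify each trace identity by entry-level reindexing rather than by an appeal to cyclicity or transpose rules. Once that caveat is observed, the proof is a routine sequence of substitutions.
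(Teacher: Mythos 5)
Your proposal is correct and follows essentially the same route as the paper's proof: note that $f_E$ and $T$ are $\mathcal H$-module automorphisms (so exactness transports), then verify the formulas for $\Psi^{\mathcal H}_1,\Psi^{\mathcal H}_2,\Psi^{\mathcal H}_3$ by direct substitution using $E^{-1}FE=F^{-t}$, $E^{-t}E=F^t$, invariance of the trace under conjugation by scalar matrices, and $\tr(A^tB^t)=\tr(AB)$. The computational details you indicate match those carried out in the paper.
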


\begin{proof}
    It is clear that $f_E$ and $T$ are $\mathcal H$-module automorphisms and, accordingly, they transform a resolution into a resolution (note that $T^{-1}=T$). We are left with checking that the concrete expressions for the $\Psi_i^{\mathcal H}$ are correct. Indeed, we have that
    \begin{align*}
        \Psi_3^{\mathcal H}\begin{pmatrix} a \\ b \end{pmatrix}&=\begin{pmatrix} f_E \\ \mathrm{id} \end{pmatrix}\circ  \Phi_3^{\mathcal H}\begin{pmatrix} a \\ b \end{pmatrix} = 
        \begin{pmatrix}
            -E^{-1}E^t E^{-1} Ea + E^{-1} E u E^{-t}E b\\ F^{-1} va - Fb
        \end{pmatrix}
        =
        \begin{pmatrix}
            -F^{-t} a + u F^tb\\ F^{-1} va - Fb
        \end{pmatrix},
        \\
        \Psi_2^{\mathcal H}\begin{pmatrix} A \\ B \end{pmatrix}&=\begin{pmatrix} T f_E \\ T \\ \mathrm{id} \end{pmatrix}\circ  \Phi_2^{\mathcal H}\begin{pmatrix} EAE^{-1} \\ B \end{pmatrix}
        =
        \begin{pmatrix} T f_E \\ T \\ \mathrm{id} \end{pmatrix}\begin{pmatrix} EAE^{-1}+ (E^{-1}u^tBE)^t \\ (v^tE^{-1}EAE^{-1}E)^t + B \\0  \end{pmatrix}
        \\
        &=\begin{pmatrix} A^{t}+ (E^{-1}(E^{-1}u^tBE)^tE)^t \\ v^tE^{-1}EAE^{-1}E + B^t \\0  \end{pmatrix}
        \\
        &=\begin{pmatrix} A^{t}+ E^{t}E^{-1}u^tBEE^{-t} \\ v^tA + B^t \\0  \end{pmatrix}
        \\
        &=
        \begin{pmatrix} A^t + Fu^tBF^{-1} \\ v^t A + B^t \\ 0 \end{pmatrix},
        \\
        \Psi_1^{\mathcal H}\begin{pmatrix} A \\ B \\ c \end{pmatrix}
        &=  \Phi_1^{\mathcal H}\begin{pmatrix} EA^tE^{-1} \\ B^t \\ c \end{pmatrix}
        = \begin{pmatrix} \operatorname{tr} (-EA^tE^{-1}+u^tB^t) +c \\ \operatorname{tr} (Ev^tE^{-1}EA^tE^{-1}-B^t)-c  \end{pmatrix} 
        =\begin{pmatrix} \tr(-A+uB)+c \\ \tr(vA-B)-c \end{pmatrix}.
    \end{align*}
\end{proof}

In fact, we will prove later that the sequence \eqref{eq:resHF_transformed} is exact, hence a free resolution, whenever $F$ is normalizable. 

\section{Hochschild cohomology for \texorpdfstring{$\mathcal H(F)$}{H(F)}}
\label{sec:Hochschild-cohomology-H(F)}

Recall that we restrict to 1-dimensional bimodules $_\e\C_\tau$ with trivial left action, whose associated right modules are simply $(_\e\C_\tau)''=\C_\tau$, as detailed in \Cref{1-dim-coefficients}. 

\begin{theorem}\label{thm:Hochschild-U+}
Let $F\in \operatorname{GL}_n(\C)$ be such that \eqref{eq:resHF_transformed} is exact,\footnote{For now, we have shown exactness whenever $F$ is a generic asymmetry. In \Cref{cor:normalizable_implies_exactness} below, we will see that it is enough to assume $F$ to be generic, which will complete the proof of \Cref{main-result:Hochschild}.} and let $\tau\colon \mathcal H(F) \to \C$ be a character. Set $S:=\tau(u)$,  $T:=\tau(v)=S^{-t}$ and
denote by $\mathcal{K}$ the space of matrices commuting with $F^tS$. Let $$d=\operatorname{dim}\, \mathcal{K}, \quad p=\begin{cases}
    1 & \mbox{ if } S=T=I_n\\
    0 & \mbox{ otherwise; }
\end{cases}, \quad
t=\begin{cases}
    1 & \mbox{ if } F^2=\alpha T \mbox{ for some $\alpha\in\C$, }\\
    2 & \mbox{ otherwise. }
\end{cases}
$$
Then the Hochschild cohomology for $\mathcal H(F)$ with 1-dimensional coefficients is given as follows:
\begin{align*}
    \dim \mathrm{H}^0(\mathcal H, {_\e\C_\tau}) &= p ,
    &\dim \mathrm{H}^1(\mathcal H, {_\e\C_\tau}) &= d+p-1 ,
    \\
    \dim \mathrm H^2(\mathcal H, {_\e\C_\tau}) &= d-t ,
    &\dim \mathrm H^3(\mathcal H, {_\e\C_\tau}) &= 2-t ,
\end{align*}
and $\dim \mathrm H^k(\mathcal H, {_\e\C_\tau}) = 0$ for all $k \geq 4 $.
\end{theorem}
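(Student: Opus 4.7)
The plan is to apply $\hom_{\mathcal H}(-,\C_\tau)$ to the free resolution \eqref{eq:resHF_transformed} of \Cref{lem-res-with-F-only} and use \Cref{lemma:traces-free-modules} to rewrite the induced cochain complex as a finite-dimensional complex of explicit matrix spaces,
\begin{equation*}
0\to \C^2 \xrightarrow{(\Psi_1^{\mathcal H})^*} M_n(\C)^2\oplus\C \xrightarrow{(\Psi_2^{\mathcal H})^*} M_n(\C)^2 \xrightarrow{(\Psi_3^{\mathcal H})^*} \C^2 \to 0.
\end{equation*}
The overall strategy mirrors the second proof of \Cref{thm:H(B(E))}, but the direct-sum structure of \eqref{eq:resHF_transformed} produces richer formulas.

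First I would compute each adjoint map explicitly. Using $\tau(u)=S$, $\tau(v)=T=S^{-t}$ together with the trace identities in \Cref{lemma:traces-free-modules}(1), a direct if slightly tedious calculation yields
\begin{align*}
(\Psi_1^{\mathcal H})^*(\lambda,\mu) &= \bigl(\mu S^{-1}-\lambda I,\;\lambda S^t-\mu I,\;\lambda-\mu\bigr),\\
(\Psi_2^{\mathcal H})^*(\Lambda,M,c) &= \bigl(\Lambda^t+TM,\;SF^t\Lambda F^{-t}+M^t\bigr),\\
(\Psi_3^{\mathcal H})^*(\Lambda,M) &= \bigl(\tr(S^{-1}F^{-t}M-F^{-1}\Lambda),\;\tr(FS^t\Lambda-F^tM)\bigr).
\end{align*}
Note that $c$ is absent from the formula for $(\Psi_2^{\mathcal H})^*$, because $\Psi_2^{\mathcal H}$ has zero $\mathcal H$-component in its codomain; this yields a free parameter in $\ker(\Psi_2^{\mathcal H})^*$.

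Next I would analyse kernels and images. For $(\Psi_1^{\mathcal H})^*$, the third coordinate forces $\lambda=\mu$, after which $\lambda(S-I)=0$ gives $\dim\ker(\Psi_1^{\mathcal H})^*=p$, whence $\dim\mathrm{H}^0=p$ and $\dim\im(\Psi_1^{\mathcal H})^*=2-p$. For $(\Psi_2^{\mathcal H})^*$, the first equation uniquely determines $M=-S^t\Lambda^t$ from $\Lambda$, and substituting into the second reduces to the commutation relation $\Lambda\cdot SF^t = SF^t\cdot\Lambda$. Since $SF^t = S(F^tS)S^{-1}$, the centralizer of $SF^t$ has the same dimension $d$ as $\mathcal K$, and combining with the free $c$-parameter yields $\dim\ker(\Psi_2^{\mathcal H})^*=d+1$, so $\dim\mathrm{H}^1=(d+1)-(2-p)=d+p-1$ and $\dim\im(\Psi_2^{\mathcal H})^*=2n^2-d$.

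Finally, for $(\Psi_3^{\mathcal H})^*$ each of the two output components is a nonzero functional on $M_n(\C)^2$ because $F$ is invertible, so the rank is $1$ or $2$. The two functionals are proportional precisely when some scalar $\alpha$ satisfies $-F^{-1}=\alpha FS^t$ and $S^{-1}F^{-t}=-\alpha F^t$; both conditions collapse to $F^2=-\alpha^{-1}T$, which is the case $t=1$. Hence the rank of $(\Psi_3^{\mathcal H})^*$ equals $t$, giving $\dim\mathrm{H}^2=(2n^2-t)-(2n^2-d)=d-t$ and $\dim\mathrm{H}^3=2-t$. Vanishing in degrees $\ge 4$ is immediate from the length-$3$ resolution. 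The main obstacle will be the careful bookkeeping of transposes in computing $(\Psi_2^{\mathcal H})^*$ and recognising that the commutation condition obtained there, written with $SF^t$, describes a space with the same dimension as the centralizer of $F^tS$ stated in the theorem.
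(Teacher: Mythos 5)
Your proposal is correct and follows essentially the same route as the paper's proof: dualize the resolution \eqref{eq:resHF_transformed} via \Cref{lemma:traces-free-modules}, compute the three adjoint maps explicitly, and apply rank--nullity; your formulas for $(\Psi_j^{\mathcal H})^*$ agree with the paper's after substituting $T^t=S^{-1}$ and $T^{-1}=S^t$, and your reduction of $\ker(\Psi_2^{\mathcal H})^*$ to the centralizer of $SF^t$ (conjugate to $F^tS$) and of the rank of $(\Psi_3^{\mathcal H})^*$ to the condition $F^2=\alpha T$ match the paper's conclusions.
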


\begin{proof}
From the resolution \eqref{eq:resHF_transformed} for $\mathcal H$, we construct the cochain complex
\begin{multline}
\label{eq:complex-for-Hoschschild-H(F)}
0\to \hom_{\mathcal H}\left(\begin{pmatrix} \mathcal H \\ \mathcal H \end{pmatrix},\C_\tau\right) 
\xrightarrow{-\circ \Psi^{\mathcal H}_1} 
\hom_{\mathcal H}\left(\begin{pmatrix} M_n(\mathcal H) \\ M_n(\mathcal H)\\ \mathcal H \end{pmatrix},\C_\tau\right)
\\
\xrightarrow{-\circ \Psi^{\mathcal H}_2}
\hom_{\mathcal H}\left(\begin{pmatrix} M_n(\mathcal H) \\ M_n(\mathcal H) \end{pmatrix},\C_\tau\right)
\xrightarrow{-\circ\Psi^{\mathcal H}_3}
\hom_{\mathcal H}\left(\begin{pmatrix} \mathcal H \\ \mathcal H \end{pmatrix},\C_\tau\right)\to0.
\end{multline}

Recall from Lemma \ref{lemma:traces-free-modules} that we have vector space isomorphisms
\[\hom_{\mathcal H}(M_{n_1}(\mathcal H)\oplus\ldots \oplus M_{n_k}(\mathcal H),\C_\tau)\cong M_{n_1}(\C)\oplus\ldots \oplus M_{n_k}(\C)\]
determined by the ``pairing''
    \[(\Lambda_1,\ldots,\Lambda_k)^\tau(A_1,\ldots, A_k)=\sum_{r=1}^k \tau\tr(\Lambda_r^t A_r),\]
    for $\Lambda_r\in M_{n_r}(\C)$ and $A_r\in M_{n_r}(\mathcal H)$.
    Also, the precompositions $-\circ\Psi^{\mathcal H}_j$ can be described by the corresponding maps $(\Psi^{\mathcal H}_j)^{*}$ which yield the isomorphic complex

    \[0\rightarrow \C \oplus \C \xrightarrow{(\Psi^{\mathcal H}_1)^{*}}M_n(\C) \oplus M_n(\C) \oplus \C\xrightarrow{(\Psi^{\mathcal H}_2)^{*}}M_n(\C) \oplus M_n(\C)\xrightarrow{(\Psi^{\mathcal H}_3)^{*}} \C \oplus \C\rightarrow 0,\]
    leading to
    \[\mathrm H^k\left( \mathcal H, {_\e\C_\tau}\right) \cong  \faktor{\ker((\Psi^{\mathcal H}_{k+1})^{*})}{\mathrm{Im}((\Psi^{\mathcal H}_k)^{*})}.\]

    We will now have a closer look at the maps $(\Psi^{\mathcal H}_j)^{*}$ ($j=1,2,3$). For $j=1$, we have 
\begin{align*}
(\Psi^{\mathcal H}_1)^{*} \colon \C \oplus \C &\to M_n(\C) \oplus M_n(\C) \oplus \C
\end{align*}
and 
\begin{align*}
  (\lambda,\mu)^{\tau}\circ\Psi_1^{\mathcal H}  \begin{pmatrix} A \\ B \\c \end{pmatrix} 
  & = \lambda \, \tau\big( \tr  (-A+uB) +c\big)
    + \mu \, \tau\big(\tr  (vA - B)-c\big) 
  \\ 
  & = \lambda \, \tau\tr  (-A)+\lambda \, \tau\tr(SB) +\lambda \tau(c)
    + \mu \, \tau \tr (TA)-
    \mu \, \tau\tr (B)
    -\mu\tau(c)
  \\ 
  & = \tau\tr\bigl((-\lambda I_n + \mu T^t)^tA \bigr) + \tau\tr\bigl((\lambda S^t-\mu I_n)^tB\bigr)+\tau\bigl((\lambda-\mu)c\bigr)\\
  &=\left(-\lambda I_n + \mu T^t, \lambda S^t-\mu I_n, \lambda - \mu\right)^{\tau} \begin{pmatrix} A \\ B \\c \end{pmatrix},
\end{align*}
i.e.,
\begin{align*}
  (\Psi^{\mathcal H}_1)^{*}  (\lambda, \mu)= \left(-\lambda I_n + \mu T^t, \lambda S^t-\mu I_n, \lambda - \mu\right).
\end{align*}
If $(\lambda,\mu)\in \ker (\Psi^{\mathcal H}_1)^{*}$, then necesarily $\lambda=\mu$. Furthermore, $\lambda=\mu\neq0$ implies $T=I_n=S$. So
\begin{align*}
    \ker (\Psi^{\mathcal H}_1)^{*} &=\begin{cases}
    \{(\lambda,\mu):\lambda=\mu\}\cong \C, & \mbox{if}\,  S=T=I_n, \\
    \{0\}, & \mbox{otherwise}.
    \end{cases}\\
\end{align*}
Using the rank-nullity theorem, we conclude that
\[\dim\ker (\Psi^{\mathcal H}_1)^{*} =p,
\qquad
\dim\im (\Psi^{\mathcal H}_1)^{*} =2-p.
  \]
\medskip
For \(
    (\Psi^{\mathcal H}_2)^{*}\colon M_n(\C) \oplus M_n(\C) \oplus \C \to M_n(\C) \oplus M_n(\C)
\) we find that

\begin{align*}
(\Lambda,\Mu,\nu)^\tau \circ \Psi^{\mathcal H}_2 
  \begin{pmatrix} A \\ B \end{pmatrix} 
  & = (\Lambda,\Mu,\nu)^\tau \begin{pmatrix} A^t + Fu^tBF^{-1} \\  v^t A + B^t \\ 0 \end{pmatrix}\\
  & =
    \tau  \tr (\Lambda^t [A^t+  Fu^tBF^{-1}])
    + \tau  \tr (\Mu^t [v^t A + B^t])\\
  & =
    \tau  \tr (\Lambda^t A^t)+ \tau  \tr (\Lambda^tF S^t BF^{-1})
    + \tau  \tr (\Mu^t T^tA)+\tau  \tr (\Mu^tB^t])\\
  &= \tau  \tr (\Lambda + \Mu^t T^t) A+ \tau  \tr (F^{-1}\Lambda^tF S^{t}+ \Mu) B,
\end{align*}
i.e.\
\begin{align*}
  (\Psi^{\mathcal H}_2)^{*} (\Lambda, \Mu , \nu)= \left(\Lambda^t + T\Mu, SF^t\Lambda F^{-t}+\Mu^t  \right).
\end{align*}
Now, $(\Lambda,\Mu,\nu )\in  \ker (\Psi^{\mathcal H}_2)^{*}$ if and only if 
$$ \Lambda^t + T\Mu=0,\qquad SF^t\Lambda F^{-t} + \Mu^t =0.$$
Inserting $\Lambda$ from the first equation into the second one, we get
$$0= -S F^t \Mu^t T^t F^{-t} + \Mu^t,$$
which holds if and only if  
\begin{align*}
\Mu^t F^t S = S F^t \Mu^t \iff S^{-1} \Mu^t F^t S = F^t S S^{-1}\Mu^t  \iff [S^{-1}\Mu^t ,F^tS]=0.
\end{align*}
Let us denote by $\mathcal{K}$ the space of matrices commuting with $F^t S$, and by $d=\operatorname{dim}\, \mathcal{K}$. Then the preceding calculations show that
\begin{align*}
\mathcal K \oplus \C\ni(K,\nu)\mapsto \left(-SKS^{-1},(SK)^t,\nu\right) \in \ker  (\Psi^{\mathcal H}_2)^{*} 
\end{align*}
defines an isomorphism of vector spaces.
Hence, by the rank-nullity theorem,
\[\dim \ker (\Psi^{\mathcal H}_2)^{*} = d+1, \qquad \im (\Psi^{\mathcal H}_2)^{*} = 2n^2+1-(d+1) = 2n^2-d.\]

Finally, for $(\Psi^{\mathcal H}_3)^{*} \colon M_n(\C) \oplus M_n(\C) \to \C \oplus \C$ we find
\begin{align*}
    (\Lambda, \Mu )^{\tau} \circ \Psi^{\mathcal H}_3 \begin{pmatrix}a \\ b \end{pmatrix} 
    	& = (\Lambda, \Mu )^{\tau}\begin{pmatrix} -F^{-t} a + u F^tb\\ F^{-1} va - Fb
    	\end{pmatrix}
   \\ & = \tau \tr \big(\Lambda^t(-F^{-t} a + u F^tb)\big)
   + \tau \tr \big( \Mu ^t(F^{-1} va - Fb)\big)
   \\ & = \tr \big({-\Lambda^t F^{-t}}+\Mu ^tF^{-1}T\big)\tau(a)  
   + \tr \big(\Lambda^t SF^t -\Mu ^tF\big)\tau(b)
   \\ & = \tr \big({-F^{-1}\Lambda}+ T^{t}F^{-t} \Mu) \tau(a)  
   + \tr \big(FS^t\Lambda - F^t \Mu) \tau(b),
\end{align*}
so (recall that $S^t = T^{-1}$)
\begin{align*}
    (\Psi^{\mathcal H}_3)^{*}
    (\Lambda, \Mu ) = \left( \tr ({-F^{-1}\Lambda}+ T^{t}F^{-t} \Mu), \tr (FT^{-1}\Lambda - F^t \Mu) \right).
\end{align*}
With a matrix $A=(a_{i,j})\in \mathbb C^{n\times n}$ we associate the row vector and a column vector,
\[A^{\rm row}:=(a_{1,1},a_{1,2},\ldots, a_{2,1}, a_{2,2},\ldots, a_{n,n})\in \mathbb C^{1\times n^2},\quad A^{\rm col}:=(A^{\rm row})^t\in\mathbb C^{n^2\times 1},\] so that $\operatorname{tr}(X^tY)=X^{\rm row} Y^{\rm col}$ is the standard bilinear form. Then the map $({-}\circ\hat\Psi_3)$ is represented by the $2\times (2n^2)$-matrix 
    $M:=\begin{pmatrix}
        (-F^{-1})^{\rm row} & (T^tF^{-t})^{\rm row} \\ (FT^{-1})^{\rm row} & (-F^t)^{\rm row}   
    \end{pmatrix}$ and $\dim \left(\operatorname{im} ({-}\circ\hat\Psi_3)\right)$ is the rank of $M$, which is either 2 or 1, depending on whether the two rows are linearly independent or not. Of course, the rows of $M$ are linearly independent if and only if $(-F^{-1},T^tF^{-t})$ and $(FT^{-1},-F^t)$ are linearly independent, since this is just a reordering of the entries. If there is a nonzero $\alpha\in\mathbb C$ with $-F^{-1}=\alpha FT^{-1}$, we see that $T=-\alpha F^2$. In this case, the second condition, $T^t F^{-t}=-\alpha F^t$, is  automatically fulfilled, proving linear dependence.
  Using the rank-nullity theorem, we can thus conclude that
  \[\dim \ker  (\Psi^{\mathcal H}_3)^{*} =2n^2-t
\qquad 
\dim \im (\Psi^{\mathcal H}_3)^{*} =t.\qedhere
\]
\end{proof}
We now consider several special cases.

\begin{example}[Cohomology of \texorpdfstring{$\operatorname{Pol} (U_n^+)$}{Pol(Un+)} with trivial coefficients]
  For the special case $E=F=I$, i.e.\ $\mathcal H(F)=\operatorname{Pol} (U_n^+)$, and $M={_\e \C_\e}$ (so that $S=T=I$) we have
  $d=n^2$, $p=1$, $t=1$, so the Hochschild cohomology for $\operatorname{Pol} (U_n^+)$ with trivial coefficients is given as follows:
  \begin{align*}
    \dim \mathrm{H}^0(\mathcal H, {_\e\C_\e}) &= 1 ,
    &\dim \mathrm{H}^1(\mathcal H, {_\e\C_\e}) &= n^2 ,
    \\
    \dim \mathrm H^2(\mathcal H, {_\e\C_\e}) &= n^2-1 ,
    &\dim \mathrm H^3(\mathcal H, {_\e\C_\e}) &= 1 ,
  \end{align*}
  and $\dim \mathrm H^k(\mathcal H, {_\e\C_\e}) = 0$ for all $k \geq 4$.
  Let us note that all but $\dim \mathrm H^3(\mathcal H, {_\e\C_\e}) = 1 $ was already known, cf.\ \cite{DFKS18,Bichon18}
\end{example}

\begin{example}[Cohomology of \texorpdfstring{$\operatorname{Pol} (U_F^+)$}{Pol(UF+)} with \texorpdfstring{$F=\operatorname{diag}(q^{-1},1,q) $}{F=diag(q(-1),1,q)} and with trivial coefficients]\label{H^2(Pol(U_F^+))-geometric_progression}
  In \cite{DFKS23} it was shown that for $F\in M_3(\C)$ positive, $\mathrm H^2(\operatorname{Pol} (U_F^+), {_\e\C_\e}) \cong sl_F(n) =\{A\in M_n(\C) \colon AF=FA, \operatorname{Tr}(AF) = \operatorname{Tr}(AF^{-1}) = 0\}$ provided the eigenvalues of $F$ do not form a geometric progression. This agrees with Theorem \ref{thm:Hochschild-U+} as the only case in which $F$ is a positive matrix and satisfies $F^2=\pm I$ is for $F=I$. So for $F\neq I$ we have $\dim \mathrm H^2(\operatorname{Pol} (U_F^+), {_\e\C_\e}) = d-2$.

  To treat the lacking case we assume without loss of generality that $F=\operatorname{diag}(q^{-1},1,q)$ with $q\in (0,1)$. Then $F=E^tE^{-1}$ for 
  $$E=\begin{pmatrix}
    0 & 0 & q \\ 0 & 1 & 0 \\ 1 & 0 & 0
  \end{pmatrix}.$$
  In this case $d=3$, $p=1$, $t=2$ and so the Hochschild cohomology is
  \begin{align*}
    \dim \mathrm{H}^0(\mathcal H, {_\e\C_\e}) &= 1 ,
    &\dim \mathrm{H}^1(\mathcal H, {_\e\C_\e}) &= 3 ,
    \\
    \dim \mathrm H^2(\mathcal H, {_\e\C_\e}) &= d-t=1 ,
    &\dim \mathrm H^3(\mathcal H, {_\e\C_\e}) &= 1 ,
  \end{align*}
  and $\dim \mathrm H^k(\mathcal H, {_\e\C_\e}) = 0$ for all $k \geq 4$.
  So in the lacking case the second Hochschild cohomology has dimension 1 as in the neighbouring cases.
\end{example}
\begin{example}[Cohomology of \texorpdfstring{$\operatorname{Pol} (U_F^+)$}{Pol(UF+)} with the action 
    \texorpdfstring{$\overline{\e}$}{ε‾}]
  Denote by $\overline{\e}$ the character on $\operatorname{Pol} (U_F^+)$ defined by $\overline{\e}(u_{jk})=-\delta_{jk}$. In this case we have $S=T=-I$, $p=0$ and $t\in \{1,2\}$. For a positive $F\in \operatorname{GL}_n(\C)$ with $k$ different eigenvalues of multiplicities $d_i$, $i=1,2,\ldots,k$, we have $d=(\sum_{i=1}^k d_i^2)$.
  Then the Hochschild cohomology is:
  \begin{align*}
    \dim \mathrm{H}^0(\mathcal H, {_{\overline{\e}}\C_\e}) &= 0 ,
    &\dim \mathrm{H}^1(\mathcal H, {_{\overline{\e}}\C_\e}) &= d-1 ,\\
    \dim \mathrm H^2(\mathcal H, {_{\overline{\e}}\C_\e}) &= d-t ,
    &\dim \mathrm H^3(\mathcal H, {_{\overline{\e}}\C_\e}) &= 2-t ,
  \end{align*}
  and $\dim \mathrm H^k(\mathcal H, {_{\overline{\e}}\C_\e}) = 0$ for all $k \geq 4 $.
\end{example}

\section{Yetter-Drinfeld resolutions}
\label{sec:YD-resolutions}
The purpose of this section is two-fold. The resolutions we have considered so far are resolutions in the category of right modules and, therefore, do not reflect the Hopf structure of the algebras.  Turning the resolutions into resolutions in the category of Yetter-Drinfeld modules allows us to extract additional cohomological data related to the Hopf structure, namely Gerstenhaber-Schack and, thus, bialgebra cohomology.  The second point is that we only found resolutions for $\mathcal H(F)$ with $F=E^tE^{-1}$ an asymmetry, so that we may identify $\mathcal H(F)$ with a (Hopf) subalgebra of $\mathcal A(E)$. Yetter-Drinfeld resolutions can be transformed using monoidal equivalences of the underlying Hopf algebras --- it is this feature which allows us to find resolutions for $\mathcal H(F)$ even when $F$ is not an asymmetry. 

\subsection{The asymmetry case}
Throughout this subsection, we will again use the shorthand notation \[\mathcal A:=\mathcal A(E)\ (=\Pol{B_{n+1}^{\#+}}),\quad \mathcal B:=\mathcal B(E)\ (=\Pol{O_n^+}),\quad \mathcal H:=\mathcal H(F)\ (=\Pol{U_n^+}),\]
(equalities in brackets for $E=F=I_n$), and we assume that $F=E^tE^{-1}$ is the asymmetry associated with $E$, 
 so that \(\mathcal H \cong \mathcal B \gluedfree \mathbb C\mathbb Z_2 
\subset \mathcal B * \mathbb C\mathbb Z_2 
\cong \mathcal A.\)

It was shown by Bichon \cite{Bic16} that the resolution \eqref{resO+E} for $\mathcal B$ can be 
interpreted as a resolution by free Yetter-Drinfeld modules. Let us quickly 
review the involved coactions. First, as discussed above, $\mathbb 
C_\varepsilon$ is a Yetter-Drinfeld module with the trivial coaction 
$\gamma(1)=1\otimes 1$. Also, $\mathcal B$ is a (free over $\mathbb C$) Yetter-Drinfeld 
module with the coadjoint coaction $\gamma(a)=a_{(2)}\otimes S(a_{(1)})a_{(3)}$. 
To deal with $M_n(\mathcal B)$, first consider the $\mathcal B$-comodules 
$V=\operatorname{span}\{e_1,\ldots,e_n\}$ with coaction $\gamma(e_k):=\sum_p 
e_p\otimes x_{pk}$ and 
$V^*:=\operatorname{span}\{e_1^*,\ldots,e_n^*\}$ with coaction $\gamma(e_j^*):= \sum_q e_q^*\otimes S(x_{jq})$. We equip the tensor product of comodules $C,D$ with the diagonal coaction $\gamma(c\otimes d)=c_{(0)}\otimes d_{(0)}\otimes c_{(1)}d_{(1)}$. In the following, this kind of comodule tensor product is simply written by juxtaposition. In particular, we write $V^*V$ for the $\mathcal A$-comodule with coaction $\gamma(e_j^*e_k)=\sum e_q^*e_p\otimes S(u_{jq})u_{pk}$. Note that, as a $\mathcal B$-module, $M_n(\mathcal B)$, the free module over $M_n$, is isomorphic to $V^*V\boxtimes \mathcal B$, the free Yetter-Drinfeld module over $V^*V$; the isomorphism is the obvious one, sending $e_{jk}\otimes b$ to $e_j^*e_k\boxtimes b$. Therefore, one has the resolution
\begin{align}\label{YD-resO+}
    0\rightarrow \mathcal B\xrightarrow{\Phi^{\mathcal B}_3}V^*V\boxtimes \mathcal B\xrightarrow{\Phi^{\mathcal B}_2}V^*V \boxtimes \mathcal B\xrightarrow{\Phi^{\mathcal B}_1} \mathcal B \xrightarrow{\varepsilon}\mathbb C_\varepsilon\rightarrow 0
\end{align}
with the same maps $\Phi_k$ as in \eqref{resO+E} under the discussed 
identification of $M_n$ with $V^*V$ (it is easy to check that the $\Phi_k$ are 
also comodule maps, note that it is enough to check the condition on the 
generating subcomodules $\mathbb C1$ and $V^*V$, respectively).

The projective resolution \eqref{eq:resZ2} for $\mathbb C\mathbb Z_2$ we use is also easily identified as a resolution by projective Yetter-Drinfeld modules.

\begin{proposition}\label{prop:YD-res-Z2}
    $\mathbb C(1-g)$ is a relatively projective Yetter-Drinfeld module with coaction determined by $(1-g)\mapsto (1-g)\otimes 1$. Hence, the resolution \eqref{eq:resZ2} is a resolution by projective Yetter-Drinfeld modules.
\end{proposition}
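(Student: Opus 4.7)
The plan is to realize $\mathbb{C}(1-g)$ as a direct summand of a free Yetter-Drinfeld module, which will simultaneously give the YD structure, the compatibility check, and the relative projectivity. The key observation driving everything is that $\mathbb{C}\mathbb{Z}_2$ is cocommutative, so the coadjoint coaction $h\mapsto h_{(2)}\otimes S(h_{(1)})h_{(3)}$ on $\mathbb{C}\mathbb{Z}_2$ collapses: using $\Delta(g)=g\otimes g$ and $S(g)=g$ one gets $g\mapsto g\otimes 1$, and similarly $1\mapsto 1\otimes 1$, so the coaction is the trivial one $h\mapsto h\otimes 1$.

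First I would note that $\mathbb{C}\mathbb{Z}_2 = \mathbb{C}\boxtimes\mathbb{C}\mathbb{Z}_2$ is the free YD module over the trivial comodule $\mathbb{C}$, carrying precisely this coadjoint (= trivial) coaction. Next, the classical idempotent splitting $\mathbb{C}\mathbb{Z}_2 = \mathbb{C}(1+g)\oplus \mathbb{C}(1-g)$ is a decomposition of right $\mathbb{C}\mathbb{Z}_2$-modules; because the ambient coaction is trivial, every linear subspace is automatically a subcomodule, so both summands are sub-YD-modules. This exhibits $\mathbb{C}(1-g)$ as a direct summand of a free YD module, hence relatively projective, and shows that the induced coaction on $\mathbb{C}(1-g)$ is exactly $(1-g)\mapsto (1-g)\otimes 1$ as asserted.

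For a sanity check one can verify the YD compatibility $(xh)_{(0)}\otimes(xh)_{(1)} = x_{(0)}h_{(2)}\otimes S(h_{(1)})x_{(1)}h_{(3)}$ by hand on the generator $x=1-g$, $h=g$: the left-hand side is $-(1-g)\otimes 1$, and the right-hand side evaluates to $(1-g)g \otimes S(g)\cdot 1\cdot g = -(1-g)\otimes 1$, matching. Finally, both arrows in \eqref{eq:resZ2} must be YD-morphisms: $\Psi_1$ is the inclusion of $\mathbb{C}(1-g)$ into $\mathbb{C}\mathbb{Z}_2$ and is trivially colinear since both source and target have trivial coaction, while $\varepsilon\colon \mathbb{C}\mathbb{Z}_2\to \mathbb{C}_\varepsilon$ lands in a comodule with trivial coaction and so is a comodule map as well.

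There is essentially no hard step; the only conceptual point is recognizing that cocommutativity of $\mathbb{C}\mathbb{Z}_2$ trivializes the coadjoint coaction on $\mathbb{C}\boxtimes\mathbb{C}\mathbb{Z}_2$, after which everything reduces to the well-known module-theoretic splitting $\mathbb{C}\mathbb{Z}_2 = \mathbb{C}(1+g)\oplus\mathbb{C}(1-g)$.
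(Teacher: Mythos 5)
Your proof is correct and follows essentially the same route as the paper: both identify $\mathbb{C}(1-g)$ as a direct summand of the free Yetter-Drinfeld module $\mathbb{C}\boxtimes\mathbb{C}\mathbb{Z}_2$ complemented by $\mathbb{C}(1+g)$, with the coadjoint coaction computed to be trivial ($\gamma(g)=g\otimes S(g)g=g\otimes 1$), giving relative projectivity and the stated coaction at once. Your extra remarks (cocommutativity trivializing the coadjoint coaction, the explicit Yetter-Drinfeld compatibility check, colinearity of the maps) merely spell out details the paper declares obvious.
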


\begin{proof}
  We can identify $\mathbb C(1-g)$ with the submodule of the free Yetter-Drinfeld module $\mathbb C\mathbb Z_2=\mathbb C\boxtimes \mathbb C\mathbb Z_2$, complemented by $\mathbb C(1+g)$. Note that the coadjoint coaction yields $\gamma(g)=g\otimes S(g)g=g\otimes 1$, therefore also $\gamma(1-g)=(1-g)\otimes 1$.
That the maps $\psi_1$ and $\varepsilon$ in \eqref{eq:resZ2} are Yetter-Drinfeld is obvious.
\end{proof}

\begin{corollary}
  Under the identification of $M_n$ with $V^*V$ discussed above, \eqref{eq_resolution_of_A} yields a resolution
  \begin{align}\label{eq:YD-res-B^hash+}
    0\to \mathcal A \xrightarrow{\Phi^{\mathcal A}_3}V^*V\boxtimes \mathcal A\xrightarrow{\Phi^{\mathcal A}_2} V^*V\boxtimes \mathcal A\oplus(\mathbb{C}(1-g)\otimes_{\mathbb{C}\mathbb{Z}_2}\mathcal A)\xrightarrow{\Phi^{\mathcal A}_1}\mathcal A\xrightarrow{\varepsilon}\mathbb{C}_\varepsilon\to0,
  \end{align}
  by relatively projective Yetter-Drinfeld modules.
\end{corollary}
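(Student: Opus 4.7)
The plan is to assemble the corollary by applying \Cref{thm:YD-res_free-product} to the two Yetter-Drinfeld resolutions that have already been collected in this subsection: the resolution \eqref{YD-resO+} for $\mathcal B$, which Bichon showed to be a resolution by free Yetter-Drinfeld modules, and the resolution \eqref{eq:resZ2} for $\mathbb C\mathbb Z_2$, which was just upgraded to a resolution by relatively projective Yetter-Drinfeld modules in \Cref{prop:YD-res-Z2}. Since $\mathcal A=\mathcal B\free \mathbb C\mathbb Z_2$, the theorem immediately produces a resolution of $\mathbb C_\varepsilon$ by projective objects in $\mathcal{YD}^{\mathcal A}_{\mathcal A}$, and this resolution will be shown to coincide with \eqref{eq:YD-res-B^hash+} after identifying the various induced modules.

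\textbf{Identifying the induced modules.} To translate the abstract terms $\widetilde X_k=X_k\otimes_{\mathcal B}\mathcal A$ and $\widetilde Y_k=Y_k\otimes_{\mathbb C\mathbb Z_2}\mathcal A$ from \Cref{thm:YD-res_free-product} into the concrete form appearing in \eqref{eq:YD-res-B^hash+}, I would invoke \Cref{prop:ind-free-YD}. This yields
\[
(V^*V\boxtimes \mathcal B)\otimes_{\mathcal B}\mathcal A\cong V^*V\boxtimes \mathcal A,\qquad \mathcal B\otimes_{\mathcal B}\mathcal A=\mathbb C\boxtimes\mathcal A=\mathcal A,\qquad \mathbb C\mathbb Z_2\otimes_{\mathbb C\mathbb Z_2}\mathcal A=\mathcal A,
\]
as Yetter-Drinfeld modules over $\mathcal A$. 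The term $\mathbb C(1-g)\otimes_{\mathbb C\mathbb Z_2}\mathcal A$ does not simplify further, but since $\mathbb C(1-g)$ is a direct summand of the free Yetter-Drinfeld $\mathbb C\mathbb Z_2$-module $\mathbb C\mathbb Z_2=\mathbb C(1-g)\oplus\mathbb C(1+g)$, its induction is a direct summand of $\mathcal A$, hence relatively projective in $\mathcal{YD}^{\mathcal A}_{\mathcal A}$. Plugging these identifications into the sequence from \Cref{thm:YD-res_free-product} reproduces exactly the term sequence in \eqref{eq:YD-res-B^hash+} (note that $Y_2=Y_3=0$ in the resolution \eqref{eq:resZ2}, which is why no further summands appear in degrees two and three).

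\textbf{Matching the differentials.} The maps produced by \Cref{thm:YD-res_free-product} are $\widetilde{\Phi}^{\mathcal B}_k=\Phi^{\mathcal B}_k\otimes_{\mathcal B}\mathrm{id}_{\mathcal A}$ and $\widetilde{\Psi}_k=\Psi_k\otimes_{\mathbb C\mathbb Z_2}\mathrm{id}_{\mathcal A}$ (taken as direct sums in degrees $\geq 2$ and as a sum in degree $1$). Under the identifications above, these are precisely the $\mathcal A$-module maps determined by the same formulas, on the comodule generators $\mathbb C1$ and $V^*V$, as the $\Phi^{\mathcal A}_k$ constructed in \Cref{sec:resolution-A(E)} via \Cref{thm:res_for_free_product}; hence they coincide. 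The one point that still needs to be checked, and which I would regard as the main (though essentially routine) obstacle, is that each $\Phi^{\mathcal B}_k$ and each $\Psi_k$ is actually a morphism of Yetter-Drinfeld modules, so that the induced maps make sense in $\mathcal{YD}^{\mathcal A}_{\mathcal A}$. By the universal property of free Yetter-Drinfeld modules it suffices to verify comodule-linearity on the generating subcomodules $\mathbb C1, V^*V\subset V^*V\boxtimes\mathcal B$ and $\mathbb C(1-g)\subset\mathbb C(1-g)$, which is a short direct computation using the explicit coactions recalled at the beginning of the subsection and $\gamma(1-g)=(1-g)\otimes 1$ from \Cref{prop:YD-res-Z2}. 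This completes the plan.
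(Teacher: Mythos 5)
Your proposal is correct and follows essentially the same route as the paper: apply \Cref{thm:YD-res_free-product} to the Yetter-Drinfeld resolutions \eqref{YD-resO+} and \eqref{eq:resZ2} (the latter via \Cref{prop:YD-res-Z2}) and identify the induced modules using \Cref{prop:ind-free-YD}. The only difference is that you re-verify the comodule-linearity of the differentials, which is already contained in the stated inputs (Bichon's Yetter-Drinfeld interpretation of \eqref{resO+E} and \Cref{prop:YD-res-Z2}), so this adds no gap.
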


\begin{proof}
  
  We apply Theorem \ref{thm:YD-res_free-product} to the resolutions \eqref{YD-resO+} for $\mathcal B$ and \eqref{eq:resZ2} for $\mathbb C\mathbb Z_2$ in its interpretation given by Proposition \ref{prop:YD-res-Z2}. Proposition \ref{prop:ind-free-YD} allows to identify the occuring free Yetter-Drinfeld $\mathcal A$-modules.
\end{proof}

In order to find the resolution for $\mathcal H$, we want to apply the restriction functor to \eqref{eq:YD-res-B^hash+}. First, we show that the assumptions of \Cref{lem:coinvariant} are satisfied.

\begin{lemma}\label{lem:H_adjoint_in_A}
  $\mathcal H\subset \mathcal A$ is adjoint, i.e.\
  \[a_{(2)}\otimes S(a_{(1)})ha_{(3)}\in \mathcal A\otimes \mathcal H\]
  for all $h\in \mathcal H,a\in \mathcal A$.
\end{lemma}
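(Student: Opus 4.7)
The argument is essentially a degree count using the $\mathbb{Z}_2$-grading of $\mathcal{A}=\mathcal{B}(E)\free\mathbb{C}\mathbb{Z}_2$. Both factors carry natural $\mathbb{Z}_2$-gradings (the excerpt already records that $\mathcal{B}(E)$ is $\mathbb{Z}_2$-graded with all $x_{ij}$ in degree~$1$, and $g$ is by definition of degree~$1$ in $\mathbb{C}\mathbb{Z}_2$), so their free product inherits a $\mathbb{Z}_2$-grading $\mathcal{A}=\mathcal{A}^0\oplus\mathcal{A}^1$. From the proof of \Cref{prop:A=H+gH_general} one reads off $\mathcal{H}=\mathcal{A}^0$ and $g\mathcal{H}=\mathcal{A}^1$.

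The plan is to first verify that the Hopf structure of $\mathcal{A}$ is compatible with this grading in the sense that
\[
\Delta(\mathcal{A}^k)\subset \mathcal{A}^k\otimes\mathcal{A}^k \qquad\text{and}\qquad S(\mathcal{A}^k)\subset\mathcal{A}^k \qquad\text{for } k\in\{0,1\}.
\]
For the antipode, the formulas $S(x)=E^{-1}x^tE$ and $S(g)=g$ show that $S$ sends every algebra generator of $\mathcal{A}$ into $\mathcal{A}^1$, so by anti-multiplicativity $S$ preserves parity. For the coproduct, $\Delta(x_{ij})=\sum_k x_{ik}\otimes x_{kj}$ and $\Delta(g)=g\otimes g$ both lie in $\mathcal{A}^1\otimes\mathcal{A}^1$, and multiplicativity of $\Delta$ propagates this to arbitrary monomials by a straightforward induction on word length.

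Granted these compatibilities, the lemma is immediate. For $a\in\mathcal{A}^k$, each Sweedler summand in $\Delta^{(2)}(a)=\sum a_{(1)}\otimes a_{(2)}\otimes a_{(3)}$ may be taken with all three factors in $\mathcal{A}^k$, so for $h\in\mathcal{H}=\mathcal{A}^0$ we have
\[
S(a_{(1)})\, h\, a_{(3)}\;\in\; \mathcal{A}^k\cdot\mathcal{A}^0\cdot\mathcal{A}^k \;=\; \mathcal{A}^{2k} \;=\; \mathcal{A}^0 \;=\; \mathcal{H},
\]
whence $a_{(2)}\otimes S(a_{(1)})\, h\, a_{(3)}\in\mathcal{A}\otimes\mathcal{H}$. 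Decomposing an arbitrary $a\in\mathcal{A}$ into its two homogeneous components extends the conclusion to all of $\mathcal{A}$. I do not expect any substantive obstacle; the only mild subtlety is formalizing that $\Delta^{(2)}(a)$ actually lies in the diagonal $\bigoplus_k \mathcal{A}^k\otimes\mathcal{A}^k\otimes\mathcal{A}^k$ rather than in the full $\mathbb{Z}_2^3$-graded tensor cube, but this is forced by multiplicativity of $\Delta$ from the check on generators.
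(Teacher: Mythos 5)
Your proof is correct and rests on the same idea as the paper's: the $\Z_2$-grading of $\mathcal A=\mathcal B(E)\free\C\Z_2$ in which $\mathcal H$ is exactly the even part. The only organizational difference is that the paper reduces the invariance of $\mathcal A\otimes\mathcal H$ under the action $\YDaction$ to the algebra generators $x_{ij}$ and $g$ and then invokes parity there, whereas you check that $\Delta$ and $S$ respect the grading and run the parity count directly on arbitrary (homogeneous) elements; both routes are complete and essentially equivalent.
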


\begin{proof}
  Since $\mathcal A$ is generated as an algebra by the $x_{ij}$ and $g$, it is enough to show that
  \[\sum x_{pq}\otimes S(x_{ip})hx_{qj}\in \mathcal A\otimes \mathcal H,\quad g\otimes ghg\in \mathcal A\otimes \mathcal H\]
  for all $h\in \mathcal H$. This is obvious when we recall the fact that $\mathcal A$ is $\mathbb 
Z_2$-graded and $\mathcal H$ consists of all even polynomials in $\mathcal H$ (cf.\ the proof of 
Proposition \ref{prop:A=H+gH_general}).
\end{proof}

The following lemma will not be used in the sequel, but, by \cite[Proposition 4.7]{Bic16}, it provides an alternative proof of \Cref{lem:H_adjoint_in_A}.

\begin{lemma}
  Let $p\colon \mathcal B\to \mathbb C\mathbb Z_2$ denote the surjective Hopf-morphism $p(x_{ij})=\delta_{ij}g$ (which is well-defined by \Cref{lem:degree_of_reflection-B(E)}). Then $P=p\free\mathrm{id}\colon \mathcal A\cong \mathcal B\free\mathbb C\mathbb Z_2\to \mathbb C\mathbb Z_2$ is a cocentral and surjective Hopf algebra map such that $\mathcal H=\mathcal A^{\operatorname{co}\mathbb C\mathbb Z_2}=\{a\in \mathcal A:a_{(1)}\otimes P(a_{(2)})=a\otimes 1\}$.  
\end{lemma}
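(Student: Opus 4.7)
The plan is to handle the three assertions separately: well-definedness and surjectivity of $P$, cocentrality, and the identification of coinvariants with $\mathcal H$.

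First, I verify that $p\colon \mathcal B\to \mathbb C\mathbb Z_2$ is a Hopf algebra map. This amounts to checking that the defining relations \eqref{eq_relations_in_BE} are preserved by $x_{ij}\mapsto\delta_{ij}g$; but this is exactly the content of \Cref{lem:degree_of_reflection-B(E)} (which already guarantees that $\mathcal B(E)$ is $2$-reflective). Next, $P:=p\free\mathrm{id}$ is obtained by applying the universal property of the free product of Hopf algebras to the pair $(p,\mathrm{id}_{\mathbb C\mathbb Z_2})$; hence it is automatically a Hopf algebra map. Surjectivity is immediate since $P(g)=g$ generates $\mathbb C\mathbb Z_2$.

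For cocentrality, I plan to check the condition $P(a_{(1)})\otimes a_{(2)}=P(a_{(2)})\otimes a_{(1)}$ on the algebra generators $x_{ij}$ and $g$ of $\mathcal A$. On $g$ it is trivial because $\Delta(g)=g\otimes g$, and on $x_{ij}$ both sides reduce to $g\otimes x_{ij}$ after substituting $\Delta(x_{ij})=\sum_k x_{ik}\otimes x_{kj}$ and applying $P$. Then I would extend cocentrality to all of $\mathcal A$ by the standard observation that the set
\[Z(P):=\{a\in\mathcal A\colon P(a_{(1)})\otimes a_{(2)}=P(a_{(2)})\otimes a_{(1)}\}\]
is closed under multiplication: for $a,b\in Z(P)$, multiplying the two cocentrality identities in $\mathbb C\mathbb Z_2\otimes\mathcal A$ gives
\[P((ab)_{(1)})\otimes (ab)_{(2)}=P(a_{(1)})P(b_{(1)})\otimes a_{(2)}b_{(2)}=P(a_{(2)})P(b_{(2)})\otimes a_{(1)}b_{(1)}=P((ab)_{(2)})\otimes (ab)_{(1)}.\]

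Finally, for the identification $\mathcal H=\mathcal A^{\mathrm{co}\mathbb C\mathbb Z_2}$, the key observation is that the right $\mathbb C\mathbb Z_2$-coaction $\gamma_P:=(\mathrm{id}\otimes P)\circ\Delta$ evaluates on generators as $\gamma_P(x_{ij})=x_{ij}\otimes g$ and $\gamma_P(g)=g\otimes g$. Thus $\gamma_P$ coincides with the $\mathbb Z_2$-grading on $\mathcal A$ that assigns degree $1$ both to every $x_{ij}$ and to $g$, viewed as a coaction of $\mathbb C\mathbb Z_2$. The coinvariants are therefore the even part of this grading, which is precisely the span of monomials of even length, i.e.\ $\mathcal H$, by the explicit decomposition already used in the proof of \Cref{prop:A=H+gH_general}. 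Since none of the steps involves a real obstacle, I expect the subtlest point to be the careful bookkeeping in the cocentrality check on $x_{ij}$, where one has to apply $P$ to the correct Sweedler leg on each side.
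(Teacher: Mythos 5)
Your argument is correct and follows essentially the same route as the paper: the key point in both is that $(\mathrm{id}\otimes P)\circ\Delta$ is the coaction encoding the natural $\mathbb Z_2$-grading of $\mathcal A$ (degree $1$ on each $x_{ij}$ and on $g$), so the coinvariants are the even part, which is $\mathcal H$ by the decomposition $\mathcal A=\mathcal H\oplus g\mathcal H$. Your explicit generator-plus-multiplicativity check of cocentrality is just a fleshed-out version of what the paper leaves implicit in its monomial formula, and it is sound.
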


\begin{proof}
  For a monomial $M$ in $\mathcal A(E)$ it is easy to see that
  \[M_{(1)}\otimes P(M_{(2)})=
    \begin{cases}
      M\otimes 1&\text{if $M$ is even}\\
      M\otimes g&\text{if $M$ is odd}
    \end{cases}
  \]
  with respect to the natural $\mathbb Z_2$-grading.  We already discussed that $\mathcal H(F)$ is the even part of $\mathcal A(E)$, so we are done.
\end{proof}

\begin{lemma}
  The Yetter-Drinfeld modules in \eqref{eq:YD-res-B^hash+} are $\mathcal H$-coinvariant.
\end{lemma}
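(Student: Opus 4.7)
The plan is to invoke \Cref{lem:coinvariant} with $H=\mathcal H$ and $A=\mathcal A$. The adjointness hypothesis \eqref{eq:assumption_lem_coinvariant} has already been established in \Cref{lem:H_adjoint_in_A}, so for each Yetter--Drinfeld module $X$ appearing in \eqref{eq:YD-res-B^hash+}, it suffices to exhibit a subcomodule $C\subset X$ which generates $X$ as an $\mathcal A$-module and whose coaction takes values in $C\otimes \mathcal H$.

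Three of the four cases are essentially immediate. The trivial comodule $\mathbb C_\varepsilon$ has coaction $1\mapsto 1\otimes 1\in\mathbb C\otimes\mathcal H$. Each of the two copies of $\mathcal A=\mathbb C\boxtimes\mathcal A$ in the resolution is generated as an $\mathcal A$-module by $\mathbb C 1$, whose coaction $1\mapsto 1\otimes 1$ clearly lands in $\mathcal A\otimes\mathcal H$. For the induced module $\mathbb C(1-g)\otimes_{\mathbb C\mathbb Z_2}\mathcal A$, I would take the generator $(1-g)\otimes_{\mathbb C\mathbb Z_2}1$; substituting into the explicit induced-coaction formula and using the trivial $\mathbb C\mathbb Z_2$-coaction on $\mathbb C(1-g)$ from \Cref{prop:YD-res-Z2}, this generator coacts as $(1-g)\otimes_{\mathbb C\mathbb Z_2}1\mapsto (1-g)\otimes_{\mathbb C\mathbb Z_2}1\otimes 1$, again landing in the tensor with $\mathcal H$.

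The only case requiring a small computation is the free Yetter--Drinfeld module $V^*V\boxtimes\mathcal A$, which appears twice in the resolution. It is generated as an $\mathcal A$-module by the subcomodule $V^*V$, on which the coaction reads
\[
\gamma(e_j^*e_k)=\sum_{p,q}e_q^*e_p\otimes S(x_{jq})\,x_{pk}.
\]
The key observation is that $\mathcal A=\mathcal B\ast\mathbb C\mathbb Z_2$ carries its natural $\mathbb Z_2$-grading in which all $x_{ij}$ and $g$ have degree one, and that $\mathcal H$ is precisely the even part of $\mathcal A$, as made explicit in the proof of \Cref{prop:A=H+gH_general}. Since $S(x_{jq})$ is a linear combination of the odd generators $x_{ml}$, the product $S(x_{jq})\,x_{pk}$ has even total degree and therefore lies in $\mathcal H$. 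Hence $\gamma(V^*V)\subset V^*V\otimes\mathcal H$, and \Cref{lem:coinvariant} concludes the argument.

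I do not anticipate any serious obstacle; the only mild subtlety is verifying the coaction on the induced module in the third case, but this reduces to a direct substitution into the induction formula.
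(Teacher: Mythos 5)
Your proposal is correct and takes essentially the same route as the paper: both apply \Cref{lem:coinvariant} via the adjointness from \Cref{lem:H_adjoint_in_A} and check the coaction on the generating subcomodules $\mathbb C 1$, $\mathbb C(1-g)$, and $V^*V$. The only cosmetic difference is in the $V^*V\boxtimes\mathcal A$ case, where the paper exhibits $S(x_{jq})x_{pk}=S(gx_{jq})\,gx_{pk}$ as a product of elements of $\mathcal H$, whereas you argue via the $\Z_2$-grading (evenness), an argument the paper itself uses in \Cref{lem:H_adjoint_in_A} and \Cref{prop:A=H+gH_general}.
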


\begin{proof}
  Due to Lemma \ref{lem:H_adjoint_in_A}, we can apply Lemma \ref{lem:coinvariant}.  The Yetter-Drinfeld modules $\mathcal A, \mathbb C(1-g)\otimes_{\mathbb C\mathbb Z_2}\mathcal A,   V^*V\boxtimes \mathcal A$ are generated as $\mathcal A$-modules by $\mathbb C1, \mathbb C(1-g), (e_j^*e_k:i,j=1,\ldots,n)$, respectively. Therefore, the claimed coinvariance follows from
  \begin{align*}
    \gamma(1)&=1\otimes 1 \in \mathbb C\otimes \mathcal H,\\
    \gamma(1-g)&=1\otimes 1 - g\otimes S(g)g=(1-g)\otimes 1\in \mathbb C(1-g)\otimes \mathcal H, \\
    \gamma(e_j^*e_k)&=\sum e_q^* e_p\otimes S(x_{jq})x_{pk}=\sum e_q^* e_p\otimes S(gx_{jq})gx_{pk}\in V^*V\otimes \mathcal H. 
  \end{align*}
  For the free Yetter-Drinfeld modules, we could also invoke \cite[Proposition 4.5]{Bic16}
\end{proof}

This means that the images under the restriction functor are the same spaces with action and coaction (co-)restricted to $\mathcal H$. The general categorical result does not say that the restriction functor maps projectives to projectives. For the free Yetter-Drinfeld $\mathcal A$-modules, one could apply \cite[Proposition 4.8]{Bic16} to $\mathcal H\subset \mathcal A$.
Anyway, we will now see that the situation is even better than a general result could promise!

\begin{lemma}
  In $\mathcal{YD}_{\mathcal H}^{\mathcal H}$, we have isomorphisms
  \begin{center}
    \renewcommand{\arraystretch}{1.5}
    \begin{tabular}{cc}
      $\mathcal A\cong \mathcal H\oplus g\mathcal H=\mathbb C\mathbb Z_2\boxtimes \mathcal H\cong \mathcal H\oplus \mathcal H, $&$
        (h+gh')\mapsto h \oplus gh'\mapsto h\oplus h';$\\$
      \mathbb (1-g)\otimes_{\mathbb C\mathbb Z_2}\mathcal A\cong \mathbb C(1-g)\boxtimes \mathcal H\cong \mathcal H, $&$
     (1-g)\otimes_{\mathbb C\mathbb Z_2}(h+gh')\mapsto (1-g)\boxtimes (h-h')\mapsto h-h' ;$\\$
      V^*V \boxtimes \mathcal A\cong V^*V \boxtimes \mathcal H \oplus gV^*Vg\boxtimes \mathcal H, $&$ e_j^*e_k\boxtimes (h+gh')\mapsto e_j^*e_k\boxtimes h + ge_j^*e_kg\boxtimes h'.$ 
    \end{tabular}
  \end{center}
for $h,h'\in \mathcal H$. In particular, all given modules are free Yetter-Drinfeld $\mathcal H$-modules.
\end{lemma}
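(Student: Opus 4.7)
The plan is to treat the three YD isomorphisms separately, in each case exploiting the universal property of free Yetter-Drinfeld modules: for any $\mathcal H$-subcomodule $C$ of a YD $\mathcal H$-module $X$, the inclusion extends uniquely to a YD map $C\boxtimes \mathcal H \to X$, which is bijective as soon as $C$ freely generates $X$ as an $\mathcal H$-module. In every case the only nontrivial step is to verify that the chosen subspace is actually an $\mathcal H$-subcomodule, i.e.\ that the restricted $\mathcal A$-coaction takes values in $\bullet\otimes \mathcal H$; this will always reduce to a parity check under the $\Z_2$-grading of $\mathcal A$ (with $\mathcal H$ as even part).

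For the first isomorphism, I would check directly that $\C\Z_2\subset \mathcal A$ carries the trivial $\mathcal H$-coaction: $\gamma(1)=1\otimes 1$ and $\gamma(g)=g\otimes S(g)g=g\otimes 1$. Since Proposition~\ref{prop:A=H+gH_general} identifies $\{1,g\}$ as a free $\mathcal H$-basis of $\mathcal A$, the universal property yields $\C\Z_2\boxtimes \mathcal H\cong \mathcal A$. Decomposing the trivial comodule $\C\Z_2=\C 1\oplus \C g$ then gives $\mathcal A\cong \mathcal H\oplus \mathcal H$ as required.

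For the second isomorphism, I would use $(1-g)\otimes_{\C\Z_2}a\mapsto (1-g)a$ to identify $\C(1-g)\otimes_{\C\Z_2}\mathcal A$ with the submodule $(1-g)\mathcal H\subset \mathcal A$ (well-defined, and injective by $\Z_2$-grading: $(1-g)h=0$ with $h\in\mathcal H$ would force $h=gh$, impossible for a nonzero even element). A short computation using $(\Delta\otimes\mathrm{id})\Delta(1-g)=1^{\otimes 3}-g^{\otimes 3}$ gives $\gamma(1-g)=(1-g)\otimes 1$, so $\C(1-g)$ is a trivial $\mathcal H$-subcomodule. The universal property then produces $\C(1-g)\boxtimes \mathcal H\cong (1-g)\mathcal H$, and triviality of $\C(1-g)$ gives the further identification with $\mathcal H$.

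The third isomorphism is the most involved. As $\mathcal H$-modules, $V^*V\boxtimes \mathcal A$ splits via $\mathcal A=\mathcal H\oplus g\mathcal H$ into two $\mathcal H$-submodules, freely generated by $\{e_j^*e_k\boxtimes 1\}$ and $\{e_j^*e_k\boxtimes g\}$ respectively. Applying the free YD coaction formula, I would compute
\begin{align*}
\gamma(e_j^*e_k\boxtimes 1) &= \sum_{p,q} e_q^*e_p\boxtimes 1 \otimes S(x_{jq})x_{pk},\\
\gamma(e_j^*e_k\boxtimes g) &= \sum_{p,q} e_q^*e_p\boxtimes g \otimes g\,S(x_{jq})x_{pk}\,g,
\end{align*}
and observe that the coefficients are products of two, respectively four, odd factors, hence even, hence in $\mathcal H$. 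The first subcomodule is isomorphic to $V^*V$ with its original $\mathcal H$-coaction, while the second is precisely the comodule denoted $gV^*Vg$ (spanned by formal symbols $ge_j^*e_kg$ with coaction dictated by the above formula). Two final applications of the universal property combine into the required YD iso $V^*V\boxtimes \mathcal A\cong V^*V\boxtimes \mathcal H\oplus gV^*Vg\boxtimes \mathcal H$. No genuine obstacle is anticipated; the only care needed is the bookkeeping of the parity check outlined above.
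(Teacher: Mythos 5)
Your proposal is correct and follows essentially the same route as the paper: the maps are module isomorphisms because they match up free $\mathcal H$-module bases (via $\mathcal A=\mathcal H\oplus g\mathcal H$), and the Yetter-Drinfeld property is obtained from the universal property of free Yetter-Drinfeld modules by checking colinearity on the generating subcomodules, which for the third row amounts to exactly the computation of $\gamma(e_j^*e_k\boxtimes g)$ (equivalently $\gamma(ge_j^*e_kg)$) that the paper carries out. Your parity bookkeeping via the $\Z_2$-grading just makes explicit the coinvariance checks that the paper handles in the surrounding lemmas.
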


\begin{proof}
    The given prescriptions obviously define module isomorphisms. In the first two rows, it is easy to see that they are also Yetter Drinfeld. Denote the isomorphism in the third row by $\Psi$ for the rest of this proof. The right hand side is the free Yetter-Drinfeld module over $V^*V+gV^*Vg$. By the universal property of free Yetter-Drinfeld modules, it is enough to check that the restrictions of $\Psi^{-1}$ to  $V^*V$ and to $gV^*Vg$ (identified with $V^*V\boxtimes 1$ and $gV^*Vg\boxtimes 1$, respectively) are comodule maps. For $V^*V$ this is obvious and for $gV^*Vg$ we calculate
    \[\gamma(ge_j^*e_kg)=\sum ge_q^*e_pg\otimes gS(x_{jq})x_{pk}g \xmapsto{\Psi^{-1}\otimes\mathrm{id}} \sum(e_q^*e_p\boxtimes g)\otimes gS(x_{jq})x_{pk}g=\gamma(e_j^*e_k\boxtimes g).\]
\end{proof}

\begin{corollary}\label{cor:YD-resolution-for-generic-asymmetry}
  Let $F=E^t E^{-1}$ be a generic asymmetry and $\mathcal H=\mathcal H(F)$. Then the sequence
  \begin{align}
        0\to
    \begin{pmatrix}
      \mathcal H\\
      \mathcal H
    \end{pmatrix}
    \xrightarrow{\Phi^{\mathcal H}_3}
    \begin{pmatrix}
      V^*V\boxtimes \mathcal H\\
      gV^*Vg\boxtimes \mathcal H
    \end{pmatrix}
    \xrightarrow{\Phi^{\mathcal H}_2}
    \begin{pmatrix}
      V^*V\boxtimes \mathcal H\\
      gV^*Vg\boxtimes \mathcal H\\
      \mathcal H
    \end{pmatrix}
    \xrightarrow{\Phi^{\mathcal H}_1}
    \begin{pmatrix}
      \mathcal H\\
      \mathcal H
    \end{pmatrix}
    \xrightarrow{\varepsilon}\mathbb{C}_\varepsilon\to0,
  \end{align}
  is a resolution of $\mathbb C_{\varepsilon_{\mathcal H}}$ by free Yetter-Drinfeld $\mathcal H$-modules with respect to the maps for the resolution \eqref{eq:resHF} under the natural identifications of $V^*V\boxtimes \mathcal H$ and $gV^*Vg\boxtimes \mathcal H$ as with $M_n(\mathcal H)$ as $\mathcal H$-modules. \end{corollary}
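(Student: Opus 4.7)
The strategy is to apply the restriction functor $\operatorname{_{\mathnormal{\mathcal H}}\mathrm{res}_\mathnormal{\mathcal A}}\colon \mathcal{YD}^{\mathcal A}_{\mathcal A} \to \mathcal{YD}^{\mathcal H}_{\mathcal H}$ to the resolution \eqref{eq:YD-res-B^hash+} and verify that the output is precisely the sequence in the statement. There are essentially two things to check: (i) that the restricted sequence is still exact and consists of free Yetter-Drinfeld $\mathcal H$-modules, and (ii) that, under the stated identifications with $M_n(\mathcal H)$, the restricted maps agree with the $\Phi^{\mathcal H}_*$ from Theorem \ref{thm:resHF}.

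For (i), the preceding coinvariance lemma tells us that every term of \eqref{eq:YD-res-B^hash+} is $\mathcal H$-coinvariant, so the restriction functor acts as the identity on underlying vector spaces (it merely co-restricts the $\mathcal A$-coaction to an $\mathcal H$-coaction). Exactness of \eqref{eq:YD-res-B^hash+} is a property of underlying vector spaces and linear maps, so it is preserved verbatim. Freeness of each restricted Yetter-Drinfeld $\mathcal H$-module, together with the identification of its generating subcomodule, is exactly the content of the preceding lemma; in particular, the two summands coming from $V^*V \boxtimes \mathcal A$ split as free Yetter-Drinfeld $\mathcal H$-modules over the distinct subcomodules $V^*V$ and $gV^*Vg$ of $V^*V\boxtimes \mathcal A$, which is why the statement records them separately even though they are isomorphic as bare $\mathcal H$-modules.

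For (ii), the maps $\Phi^{\mathcal A}_*$ are $\mathcal A$-linear and $\mathcal A$-colinear by construction, hence restrict to morphisms in $\mathcal{YD}^{\mathcal H}_{\mathcal H}$ without any extra check, so only the computational identification remains. But this is precisely the rewriting performed in the proof of Theorem \ref{thm:resHF}, where the action of $\Phi^{\mathcal A}_*$ on the free $\mathcal H$-module bases arising from the decompositions $\mathcal A = \mathcal H \oplus g\mathcal H$, $M_n(\mathcal A) = M_n(\mathcal H) \oplus gM_n(\mathcal H)$, and $\mathbb C(1-g)\otimes_{\mathbb C\mathbb Z_2}\mathcal A \cong (1-g)\mathcal H$ was computed explicitly; we may invoke that computation verbatim.

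The one subtle point that deserves attention is compatibility of the Yetter-Drinfeld structures under the identifications: namely, that the structure induced on $V^*V\boxtimes \mathcal H$ from $V^*V\boxtimes \mathcal A$ by restriction coincides with the canonical free Yetter-Drinfeld $\mathcal H$-module structure over $V^*V$, and analogously for $gV^*Vg\boxtimes \mathcal H$. This is exactly what the preceding lemma established via the universal property of free Yetter-Drinfeld modules, completing the argument.
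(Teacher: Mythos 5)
Your proposal is correct and follows essentially the same route as the paper: the corollary is obtained by restricting the Yetter-Drinfeld resolution \eqref{eq:YD-res-B^hash+} for $\mathcal A$, using the coinvariance lemma (so restriction leaves the underlying spaces, maps, and exactness untouched), the lemma identifying $\mathcal A$, $\mathbb C(1-g)\otimes_{\mathbb C\mathbb Z_2}\mathcal A$, and $V^*V\boxtimes\mathcal A\cong V^*V\boxtimes\mathcal H\oplus gV^*Vg\boxtimes\mathcal H$ as free Yetter-Drinfeld $\mathcal H$-modules, and the basis computation from the proof of \Cref{thm:resHF} to recognize the maps as $\Phi^{\mathcal H}_*$. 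No gaps.
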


\Cref{cor:YD-resolution-for-generic-asymmetry} only solves the problem for $F$ a generic asymmetry. In order to find a free Yetter-Drinfeld resolution for more general matrices $F$, we take the sequence \eqref{eq:resHF_transformed} as a starting point.

\subsection{Yetter-Drinfeld structures on \texorpdfstring{$M_n(\mathcal H)$}{Mn(H)}}

For arbitrary $F\in\operatorname{GL}_n(\C)$, we consider the following four $\mathcal H(F)$-coactions on $M_n(\C)$
\begin{align*}
  \gamma_u(e_{ij})&=\sum_{k,\ell} e_{k\ell}\otimes S(u_{ik})u_{\ell j}
  &
  \gamma_v(e_{ij})&=\sum_{k,\ell} e_{k\ell}\otimes S(v_{ik})v_{\ell j}
  \\  
  \gamma_{\tilde u}(e_{ij})&=\sum_{k,\ell} e_{k\ell}\otimes S(u_{j\ell})u_{k i}
  &
  \gamma_{\tilde v}(e_{ij})&=\sum_{k,\ell} e_{k\ell}\otimes S(v_{j \ell})v_{k i};
\end{align*}
the corresponding comodules are denoted by $M_n^{u}(\C),M_n^{v}(\C),M_n^{\tilde u}(\C),M_n^{\tilde v}(\C)$, respectively. To get a better grasp on these formulas, note that for a matrix $A\in M_n(\C)$, we have:
\begin{align*}
  \gamma_u(A)&=\sum_{i,j} A_{ij} \gamma_u (e_{ij}) = S(u)^t A u^t = vAu^t\\
  \gamma_v(A)&=\sum_{i,j} A_{ij} \gamma_v (e_{ij}) = S(v)^t A v^t = F^{-t}uF^t A v^t\\
  \gamma_{\tilde u}(A) &= \sum_{i,j} A_{ij} \gamma_{\tilde u} (e_{ij}) = (S(u)^t A^t u^t)^t = (vA^tu^t)^t\\
  \gamma_{\tilde v}(A)&= \sum_{i,j} A_{ij}\gamma_{\tilde v}(e_{ij}) = (F^{-t}uF^t A^t v^t)^t.
\end{align*}
For $\#\in\{u,v,\tilde u, \tilde v\}$, we denote the free Yetter-Drinfeld module over $M_n^{\#}(\C)\boxtimes \mathcal H$ by $M_n^{\#}(\mathcal H)$.

\begin{theorem}\label{thm:Psi_i-YD}
  Let $F\in\operatorname{GL}_n(\C)$ be arbitrary, $\mathcal H=\mathcal H(F)$. The maps $\Psi_i^{\mathcal H}$ in the sequence
  \begin{equation} 
	\label{eq:YD-res_H(F)}
	0\to \begin{pmatrix} \mathcal H \\ \mathcal H \end{pmatrix} 
	\xrightarrow{\Psi^{\mathcal H}_3} 
	\begin{pmatrix} M_n^v(\mathcal H) \\ M_n^u(\mathcal H) \end{pmatrix} 
	\xrightarrow{\Psi^{\mathcal H}_2}
	\begin{pmatrix} M_n^{\tilde v}(\mathcal H) \\ M_n^{\tilde u}(\mathcal H) \\ \mathcal H \end{pmatrix}
        \xrightarrow{\Psi^{\mathcal H}_1}
	\begin{pmatrix} \mathcal H \\ \mathcal H \end{pmatrix} \xrightarrow{\varepsilon}\mathbb{C}_\varepsilon\to0   
      \end{equation}
      defined as
       \begin{align*}
        \Psi_3^{\mathcal H}\begin{pmatrix} a \\ b \end{pmatrix}&= 
        \begin{pmatrix}
            -F^{-t} a + u F^tb\\ F^{-1} va - Fb
        \end{pmatrix},
        \\
        \Psi_2^{\mathcal H}\begin{pmatrix} A \\ B \end{pmatrix}&=
        \begin{pmatrix} A^t + Fu^tBF^{-1} \\  v^t A + B^t \\ 0 \end{pmatrix},
        \\
         \Psi_1^{\mathcal H}\begin{pmatrix} A \\ B \\ c\end{pmatrix}&=
        \begin{pmatrix} \tr(-A+uB)+c \\ \tr(vA-B)-c \end{pmatrix},
       \end{align*}
       are morphisms of Yetter-Drinfeld modules. 
    \end{theorem}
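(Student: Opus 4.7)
The plan is as follows. Each $\Psi_i^{\mathcal H}$ is by construction an $\mathcal H$-module map, so what remains is to check comodule compatibility. Since both source and target are finite direct sums of free Yetter-Drinfeld modules $C\boxtimes\mathcal H$, I will invoke the universal property recalled in \Cref{subsec:YD-generalities} to reduce the verification: it suffices to show that for each summand of the source, the restriction of $\Psi_i^{\mathcal H}$ to the generating subcomodule $C$ is an $\mathcal H$-comodule map into the target Yetter-Drinfeld module.

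Applied to $\Psi_3^{\mathcal H}$, the generating subcomodules are the two copies of $\C$ (with trivial coaction) embedded in $\mathcal H\oplus\mathcal H$, and the condition becomes coinvariance of $\Psi_3^{\mathcal H}(1,0)=(-F^{-t},F^{-1}v)$ and $\Psi_3^{\mathcal H}(0,1)=(uF^t,-F)$ in $M_n^v(\mathcal H)\oplus M_n^u(\mathcal H)$. For the scalar-matrix components the verification is immediate: $\gamma_v(-F^{-t})=-F^{-t}uF^tF^{-t}v^t=-F^{-t}uv^t=-F^{-t}$ using $uv^t=I$, and $\gamma_u(-F)=-vFu^t=-F$ using the relation $vFu^tF^{-1}=I$, i.e.\ $vFu^t=F$. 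For the non-scalar components $F^{-1}v$ and $uF^t$, I will write out the Yetter-Drinfeld coaction
\[\gamma_\boxtimes(e_{ij}\boxtimes h)=\gamma_\#(e_{ij})^{(0)}\boxtimes h_{(2)}\otimes S(h_{(1)})\gamma_\#(e_{ij})^{(1)}h_{(3)},\]
expand $\Delta^{(2)}(v_{pj})=\sum_{q,r}v_{pq}\otimes v_{qr}\otimes v_{rj}$ (and analogously for $u$), and then collapse the resulting triple sum by applying $\sum_j u_{\ell j}v_{rj}=\delta_{\ell r}$ (which is $uv^t=I$) together with $vFu^t=F$ (for $F^{-1}v$), or the transposed variant $u^tF^{-1}v=F^{-1}$ (for $uF^t$), to obtain $F^{-1}v\otimes 1$ and $uF^t\otimes 1$ respectively.

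The same strategy will handle $\Psi_2^{\mathcal H}$ and $\Psi_1^{\mathcal H}$, now with generating subcomodules $M_n^v(\C)\oplus M_n^u(\C)$ and $M_n^{\tilde v}(\C)\oplus M_n^{\tilde u}(\C)\oplus\C$ respectively. Working on the basis $\{e_{ij}\}$, I will compare, in each component, the target coaction with the source coaction composed with $\Psi_i^{\mathcal H}\otimes\mathrm{id}$. The purely scalar-matrix blocks (such as $A\mapsto A^t$ and $B\mapsto B^t$) boil down to the elementary fact that the transpose $e_{ij}\mapsto e_{ji}$ yields comodule isomorphisms $M_n^v(\C)\to M_n^{\tilde v}(\C)$ and $M_n^u(\C)\to M_n^{\tilde u}(\C)$, while the blocks involving $u^t$ or $v^t$ paired with scalar matrices require the same Yetter-Drinfeld expansion and reduce to the same four defining relations $uv^t=v^tu=I$ and $vFu^tF^{-1}=Fu^tF^{-1}v=I$, together with the corepresentation property of $u$ and $v$.

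The main technical obstacle will be the index bookkeeping in the Yetter-Drinfeld coaction applied to matrix-valued elements whose entries are genuine products of generators in the non-commutative algebra $\mathcal H(F)$; the crux in each case is to invoke the defining relations at the right stage of the expansion so that the iterated coproduct telescopes into the required coinvariance identity. As a sanity check, in the asymmetry case \Cref{cor:YD-resolution-for-generic-asymmetry} combined with \Cref{lem-res-with-F-only} already yields the Yetter-Drinfeld property of $\Psi_i^{\mathcal H}$; the present statement extends this directly to arbitrary $F$ by performing the verification purely in terms of $F$, $u$, and $v$.
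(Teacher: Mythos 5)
Your proposal follows essentially the same route as the paper's proof: since the maps are module maps by construction, one checks colinearity only on the generating subcomodules, verifies the scalar-matrix components directly from the matrix forms of $\gamma_u,\gamma_v,\gamma_{\tilde u},\gamma_{\tilde v}$, and handles the non-scalar components by expanding the free Yetter-Drinfeld coaction and collapsing the sums via the defining relations of $\mathcal H(F)$. The only (immaterial) slip is in the bookkeeping of which relation collapses which component of $\Psi_3^{\mathcal H}$ — the $F^{-1}v$ entry uses $uv^t=I$ and $u^tF^{-1}v=F^{-1}$, while the $uF^t$ entry uses $v^tu=I$ and $vFu^t=F$ — but all four relations are available and the argument goes through exactly as you outline.
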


    \begin{proof}
      The only thing we have to prove is that the $\Psi_i^{\mathcal H}$ are comodule maps. According to the column vector form $P_i=((P_i)_k)$ of the involved Yetter-Drinfeld modules, the maps $\Psi_i^{\mathcal H}\colon P_i\to P_{i-1}$ inherit a matrix structure with components $(\Psi_i^{\mathcal H})_{k,\ell}\colon (P_i)_\ell\to (P_{i-1})_k$. Because we know that the maps are module maps already, it is enough to check $\mathcal H$-colinearity componentwise on generators. Let us start $\Psi_3$ and note that $\gamma(1)=1\otimes 1$ for the coaction on $\mathcal H=\C\boxtimes \mathcal H$. In two cases we only have to deal with scalar matrices and the calculations are very short:
      \begin{align*}
        \gamma_v((\Psi_3^{\mathcal H})_{11}(1))=\gamma_v(-F^{-t})= -F^{-t}uF^t F^{-t} v^t=-F^{-t}
         = (\Psi_3^{\mathcal H})_{11}\otimes\mathrm{id}(\gamma(1))
       \end{align*}
       \begin{align*}
         \gamma_u((\Psi_3^{\mathcal H})_{22}(1))=\gamma_u(-F)= -vFu^t= -F = (\Psi_3^{\mathcal H})_{22}\otimes\mathrm{id} (\gamma(1))
         \end{align*}
      For the remaining two components, we have to apply the coaction to non-scalar matrices, which makes the calculations a bit more cumbersome. (Recall from \Cref{subsec:YD-generalities} that for $X\in\mathcal{YD}_H^H, x\in X, h\in H$, we put $(x\otimes h)\YDaction h':=xh'_{(2)}\otimes S(h'_{(1)})h h'_{(3)}$, and the coaction $M\to M\otimes H$ is a module map for the $\YDaction$-action.) Observe that
      \begin{align*}
        \MoveEqLeft\gamma_u((\Psi_3^{\mathcal H})_{21}(1))=\gamma_u(F^{-1}v)\\
        &=\sum_{i,j,p} \left(\gamma_u(e_{ij}\boxtimes 1)\right)\YDaction F^{-1}_{ip}v_{pj}\\
        &=\sum_{i,j,k,\ell,p} e_{k\ell}\boxtimes 1\otimes  S(u_{ik})u_{\ell j} \YDaction F^{-1}_{ip}v_{pj}\\
        &=\sum _{\substack{i,j,k,\ell,\\p,s,t}} e_{k\ell}\boxtimes v_{st}\otimes S(v_{ps})F^{-1}_{ip}  S(u_{ik})\underbrace{u_{\ell j} v_{tj}}_{\delta_{\ell t}}\\
        &=\sum_{i,k,\ell,s,p} e_{k\ell}\boxtimes v_{s\ell}\otimes S(\underbrace{u_{ik} F^{-1}_{ip}v_{ps}}_{(u^tF^{-1}v)_{ks}=F^{-1}_{ks}})\\
        &=F^{-1}v\otimes 1 = (\Psi_3^{\mathcal H})_{21}\otimes \mathrm{id} (\gamma(1)),
      \end{align*}
      \begin{align*}
        \MoveEqLeft\gamma_v((\Psi_3^{\mathcal H})_{12}(1))=\gamma_v(uF^t)\\
        &=\sum_{i,j,p} \left(\gamma_u(e_{ij}\boxtimes 1)\right)\YDaction u_{ip} F_{jp}\\
        &=\sum_{i,j,k,\ell,p} e_{k\ell}\boxtimes 1 \otimes S(v_{ik}) v_{\ell j} \YDaction u_{ip} F_{jp}\\
        &=\sum_{\substack{i,j,k,\ell,\\p,s,t}} e_{k\ell}\boxtimes u_{st}\otimes S(u_{is}) S(v_{ik})\underbrace{v_{\ell j} F_{jp} u_{tp}}_{F_{\ell t}}\\
        &=\sum_{i,k,\ell,s,t} e_{k\ell}\boxtimes u_{st}  F_{\ell t}\otimes S(\underbrace{v_{ik}u_{is}}_{\delta_{ks}})\\
        &=uF^t\otimes 1 = (\Psi_3^{\mathcal H})_{12}\otimes \mathrm{id} (\gamma(1)).
      \end{align*}
      Now we come to $\Psi_2^{\mathcal H}$. Clearly, 
      \begin{align*}
        \gamma_{\tilde v}(\Psi_2^{\mathcal H})_{11}(A)&=\gamma_{\tilde v}(A^t)=\gamma_v(A)^t= (\Psi_2^{\mathcal H})_{11}\otimes\mathrm{id}(\gamma_{v}(A)),\\
        \gamma_{\tilde u}(\Psi_2^{\mathcal H})_{22}(B)&=\gamma_{\tilde u}(B^t)=\gamma_u(B)^t= (\Psi_2^{\mathcal H})_{11}\otimes\mathrm{id}(\gamma_{u}(B)).
      \end{align*}
      Note that
      \[(\Psi_2^{\mathcal H})_{21}(e_{ij}\boxtimes 1)=v^t e_{ij}= \sum_{p,q}v_{qp}e_{pq}e_{ij}=\sum_p v_{ip}e_{pj}=\sum_p e_{pj}\boxtimes v_{ip}.\]
      With that in mind, we find
      \begin{align*}
        \gamma_{\tilde u}(\Psi_2^{\mathcal H})_{21}(e_{ij})
        &=\sum_p \gamma_{\tilde u}(v_{ip}e_{pj})\\
        &=\sum_p \gamma_{\tilde u}(e_{pj})\YDaction v_{ip}\\
        &=\sum_{k,\ell,p} e_{k\ell}\boxtimes 1 \otimes S(u_{j\ell}) u_{kp}\YDaction v_{ip}\\
        &=\sum_{k,\ell,p,s,t} e_{k\ell}\boxtimes v_{st}\otimes S(v_{is})S(u_{j\ell}) \underbrace{u_{kp} v_{tp}}_{\delta_{kt}}\\
        &=\sum_{k,\ell,s} e_{k\ell}\boxtimes v_{sk}\otimes S(v_{is})v_{\ell j}\\
        &=\sum_{\ell,s} (\Psi_2^{\mathcal H})_{21}(e_{s \ell}\boxtimes 1)\otimes S(v_{is})v_{\ell j}\\
        &=((\Psi_2^{\mathcal H})_{21}\otimes \mathrm{id})(\gamma_{v}(e_{ij})).
      \end{align*}
      We can deal with $(\Psi_2^{\mathcal H})_{12}$ similarly. Using
      \[(\Psi_2^{\mathcal H})_{12}(e_{ij}\boxtimes 1)=Fu^te_{ij}F^{-1}= \sum_{p,q,r,s,t} e_{pt}e_{ij}e_{sq}\boxtimes F_{pr}u_{tr}F^{-1}_{sq}=\sum_{p,q,r} e_{pq}\boxtimes F_{pr}u_{ir}F^{-1}_{jq}, \]
      we get
      \begin{align*}
        \MoveEqLeft\gamma_{\tilde v}(\Psi_2^{\mathcal H})_{12}(e_{ij})
        =\sum_{p,q,r} \gamma_{\tilde v}(e_{pq})\YDaction F_{pr}u_{ir}F^{-1}_{jq}\\
        &=\sum_{k,\ell,p,q,r} e_{k\ell}\boxtimes 1 \otimes S(v_{q\ell}) v_{kp}\YDaction  F_{pr}u_{ir}F^{-1}_{jq}\\
        &=\sum_{\substack{k,\ell,p,q,\\r,s,t}} e_{k\ell}\boxtimes u_{st}\otimes S(u_{is}) \underbrace{ F^{-1}_{jq} S(v_{q\ell}) }_{u_{qj} F^{-1}_{q\ell}}\underbrace{ v_{kp}   F_{pr}  u_{tr}}_{F_{kt}}\\
        &=\sum_{k,\ell,q,s,t} e_{k\ell}\boxtimes F_{kt} u_{st} F^{-1}_{q\ell}\otimes S(u_{is})u_{qj} \\
        &=((\Psi_2^{\mathcal H})_{12}\otimes\mathrm{id})(\gamma_{u}(e_{ij})).
      \end{align*}
      Finally, the calculations for $\Psi^{\mathcal H}_1$ are relatively short. For $(\Psi^{\mathcal H}_1)_{11}(e_{ij})=(\Psi^{\mathcal H}_1)_{22}(e_{ij})=-\delta_{ij}$ we get
      \begin{align*}
        ((\Psi^{\mathcal H}_1)_{11}\otimes\mathrm{id})(\gamma_{\tilde v}(e_{ij}))&=\sum -\delta_{k\ell}\otimes S(v_{j\ell})v_{ki}=-\delta_{ij} 1\otimes 1=\gamma((\Psi^{\mathcal H}_1)_{11}(e_{ij}))\\
        ((\Psi^{\mathcal H}_1)_{22}\otimes\mathrm{id})(\gamma_{\tilde u}(e_{ij}))&=\sum -\delta_{k\ell}\otimes S(u_{j\ell})u_{ki}=-\delta_{ij} 1\otimes 1=\gamma((\Psi^{\mathcal H}_1)_{22}(e_{ij})).
      \end{align*}
      Moreover, $(\Psi^{\mathcal H}_1)_{21}(e_{ij})=\tr(ve_{ij})=v_{ji}$ and $(\Psi^{\mathcal H}_1)_{12}(e_{ij})=\tr(ue_{ij})=u_{ji}$, so that
      \begin{align*}
        ((\Psi^{\mathcal H}_1)_{21}\otimes\mathrm{id})(\gamma_{\tilde v}(e_{ij}))&=\sum v_{\ell k}\otimes S(v_{j\ell})v_{ki}=1\YDaction v_{ji}=\gamma((\Psi^{\mathcal H}_1)_{11}(e_{ij}))\\
        ((\Psi^{\mathcal H}_1)_{12}\otimes\mathrm{id})(\gamma_{\tilde u}(e_{ij}))&=\sum u_{\ell k}\otimes S(u_{j\ell})u_{ki}=1\YDaction u_{ji}=\gamma((\Psi^{\mathcal H}_1)_{11}(e_{ij})).
      \end{align*}
      The remaining components are either zero or $\pm$ identity maps on $\mathcal H$, so these are trivially comodule maps. 
    \end{proof}

\subsection{Monoidal equivalence and the general case}

The sequence from \Cref{thm:Psi_i-YD} is easily seen to be a complex for arbitrary $F\in \operatorname{GL}_n(\C)$, but our previous arguments only prove exactness if $F$ is a generic asymmetry. In order to prove exactness at least whenever $F$ is generic, we use the theory of Hopf-bi-Galois objects, which provide concrete monoidal equivalences between the categories of (free) Yetter-Drinfeld modules over $\mathcal H(F)$ and $\mathcal H(F')$ whenever $\tr{F}=\tr(F')$ and $\tr(F^{-1})=\tr({F'}^{-1})$. In fact, following Bichon's reasoning for $\mathcal B(E)$ \cite{Bic13}, we use the language of cogroupoids; an overview of the theory of cogroupoids and their relation to Hopf-bi-Galois objects can be found in \cite{bichon14}. Whether or not the sequence remains exact when $F$ is not normalizable remains an open problem, but as far as the compact quantum groups $U_K^+$ are concerned, the problem is resolved completely, because $\Pol{U_K^+}\cong \mathcal H(K^*K)$ and a positive matrix $K^*K$ is always normalizable.

For $F\in \operatorname{GL}_n(\C),F'\in \operatorname{GL}_{n'}(\C)$, consider the algebra 
$\mathcal H(F,F')$ generated by the entries of the rectangular matrices $u^{F,F'}=(u^{F,F'}_{ij})$ and $v^{F,F'}=(v^{F,F'}_{ij})$, where $i$ runs from $1$ to $n$ and $j$ runs from $1$ to $n'$, with relations
\begin{align*}
u^{F,F'}(v^{F,F'})^t&=I_n = v^{F,F'} F' (u^{F,F'})^t F^{-1},& (v^{F,F'})^tu^{F,F'}&=I_{n'}=F' (u^{F,F'})^t F^{-1}v^{F,F'} .
\end{align*}
Note that $\mathcal H(F,F)$ is simply $\mathcal H(F)$. 

Recall that, for a coalgebra $C$, the cotensor product of a right $C$-comodule $M$ with a left $C$-comodule $N$ is $M\mathbin{\Box} N:=\{X\in M\otimes N: \gamma_M\otimes \mathrm{id}(X)=\mathrm{id}\otimes\gamma_N(X)\}$.

The algebras $\mathcal H(F,F')$ carry a natural $\mathcal H(F)$-$\mathcal H(F')$-bicomodule structure, moreover, they form a \emph{cogroupoid} \cite[Lemma 3.8 and Definition 3.9]{bichon14}. Furthermore, by \cite[Proposition 2.4]{Bichon07}, $H(F,F')$ is non-zero whenever $\tr{F}=\tr(F')$ and $\tr(F^{-1})=\tr({F'}^{-1})$. From the general theory of cogroupoids (see \cite[Theorem 4.4]{Bic13}), it follows that the functor  $(-)\mathbin{\Box} \mathcal H(F,F')$ is an equivalence between the categories of Yetter-Drinfeld modules over $\mathcal H(F)$ and over $\mathcal H(F')$ in that case. Furthermore, for every right $\mathcal H(F)$ comodule $M$, there is a canonical isomorphism between the free Yetter-Drinfeld module over the cotensor product, $(M\mathbin\Box \mathcal H(F,F'))\boxtimes \mathcal H(F')$, and the cotensor product with the free Yetter-Drinfeld module,  $(M\boxtimes \mathcal H(F))\mathbin\Box \mathcal H(F,F')$.

We assume for the rest of this section that $\tr{F}=\tr(F')$ and $\tr(F^{-1})=\tr({F'}^{-1})$ so that $\mathcal H(F,F')$ is non-zero. Because the functor $(-)\mathbin{\Box} \mathcal H(F,F')$ is an equivalence of abelian categories, it is exact. 
Therefore, this allows to transform any exact sequence of free $\mathcal H(F)$ Yetter-Drinfeld modules $P_i$ into an exact sequence of $\mathcal H(F')$ Yetter-Drinfeld modules $P_i\mathbin{\Box} \mathcal H(F,F')$. In \Cref{lem:YD-module-isomorphisms,lem:monoidal-equivalence-transforms-Psi(F)-into-Psi(F')}, we will see that if we apply this technique to the resolutions above, the resolution for $F$ transforms into the resolution for $F'$ (or the other way round if we exchange the roles of $F$ and $F'$).

\begin{lemma}\label{lem:YD-module-isomorphisms}
  The following prescriptions uniquely extend to isomorphisms of Yetter-Drinfeld modules:
  \begin{align*}
    \mathcal H(F')\ni 1 &\mapsto 1\otimes 1 \in \mathcal H(F)\mathbin{\Box} \mathcal H (F,F'),\\
    M_n^{u^{F'}}\ni e_{ij}&\mapsto \sum_{k,\ell} e_{k\ell}\otimes S^{F'F}(u_{ik}^{F'F})u_{\ell j}^{F F'}\in M_n^{u^{F}}\mathbin{\Box}
                            \mathcal H(F,F'),\\
    M_n^{v^{F'}}\ni e_{ij}&\mapsto \sum_{k,\ell} e_{k\ell}\otimes S^{F'F}(v_{ik}^{F'F})v_{\ell j}^{F'F}\in M_n^{v^{F}}\mathbin{\Box}
                            \mathcal H(F,F'),\\
    M_n^{\tilde u^{F'}}\ni e_{ij}&\mapsto \sum_{k,\ell} e_{k\ell}\otimes S^{F'F}(u_{j\ell}^{F'F})u_{ki}^{FF'}\in M_n^{\tilde u^{F}}\mathbin{\Box}
                            \mathcal H(F,F'),\\
    M_n^{\tilde v^{F'}}\ni e_{ij}&\mapsto \sum_{k,\ell} e_{k\ell}\otimes S^{F'F}(v_{j\ell}^{F'F})v_{ki}^{FF'}\in M_n^{\tilde v^{F}}\mathbin{\Box}
                            \mathcal H(F,F').
  \end{align*}
\end{lemma}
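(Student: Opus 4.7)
The central tool is the cogroupoid structure carried by the family $\{\mathcal H(F,F')\}$, developed by Bichon and recalled in \cite{bichon14}. Concretely, one has cogroupoid ``comultiplications'' $\Delta^{FGH}\colon \mathcal H(F,H)\to \mathcal H(F,G)\otimes \mathcal H(G,H)$ determined on generators by $\Delta^{FGH}(u^{FH}_{ij})=\sum_k u^{FG}_{ik}\otimes u^{GH}_{kj}$ and analogously for $v^{FH}$, together with the ``antipodes'' $S^{FF'}\colon\mathcal H(F,F')\to\mathcal H(F',F)$ given by $S^{FF'}(u^{FF'}_{ij})=v^{F'F}_{ji}$, satisfying the usual cogroupoid axioms. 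The bicomodule structure on $\mathcal H(F,F')$ used in the cotensor product is: left $\mathcal H(F)$-coaction $\Delta^{FFF'}$, right $\mathcal H(F')$-coaction $\Delta^{FF'F'}$. The basic coassociativity-type identity $(\mathrm{id}\otimes \Delta^{FFF'})\circ\Delta^{FFF'} = (\Delta^{FFF}\otimes \mathrm{id})\circ\Delta^{FFF'}$ together with its $v$-analogue is what makes the cotensor construction well-behaved.

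For each of the five prescribed maps I would proceed in three steps. First, check well-definedness --- that the image of each basis vector lies in the cotensor product. Unfolding the coaction on the left factor and comparing with $\Delta^{FFF'}$ applied to the right factor, this reduces to an identity in $M_n\otimes \mathcal H(F)\otimes \mathcal H(F,F')$ which is exactly an instance of the cogroupoid coassociativity above, applied to a generator $u^{FF'}_{\ell j}$ respectively $v^{FF'}_{ki}$. Second, check $\mathcal H(F')$-colinearity: apply the defining $\mathcal H(F')$-coaction to the left-hand basis vector and the right $\mathcal H(F')$-coaction $\Delta^{FF'F'}$ to the right factor on the right-hand side; again the match is an instance of cogroupoid coassociativity in the other slot. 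Third, establish bijectivity. The clean argument is to invoke the general cogroupoid fact that whenever $\mathcal H(F,F')\neq 0$ (which holds here because we may assume $\tr F=\tr F'$, $\tr F^{-1}=\tr{F'}^{-1}$, cf.\ \cite[Proposition 2.4]{Bichon07}), the functor $X\mapsto X\mathbin\Box \mathcal H(F,F')$ is a $\C$-linear equivalence of categories $\mathcal{YD}_{\mathcal H(F)}^{\mathcal H(F)}\to\mathcal{YD}_{\mathcal H(F')}^{\mathcal H(F')}$, hence dimension-preserving on finite-dimensional comodules, so a surjective $\mathcal H(F')$-comodule map between objects of the same finite dimension is automatically bijective. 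Alternatively, one writes down inverses using $S^{F'F}$ and verifies the composite is the identity via the cogroupoid antipode axiom.

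To upgrade the comodule isomorphisms to the asserted Yetter-Drinfeld isomorphisms, I would use the canonical natural isomorphism $(C\boxtimes\mathcal H(F))\mathbin\Box \mathcal H(F,F')\cong (C\mathbin\Box \mathcal H(F,F'))\boxtimes \mathcal H(F')$ (already invoked in the preceding subsection) and the universal property of the free YD module over a comodule $C$: any comodule map out of $C$ extends uniquely to a YD map on $C\boxtimes \mathcal H$. Thus each of the five displayed bijections automatically induces (and in fact coincides, under the standard identifications, with) an isomorphism between the free YD modules generated by the relevant comodules. The main technical obstacle is the careful bookkeeping in the cogroupoid relations, especially in the ``non-Kac'' situation where both $u^{FF'}$ and $v^{FF'}$ appear and are coupled through the extra relations $v^{FF'}F'(u^{FF'})^t F^{-1} = I_n$ involving $F$ and $F'$; handling the asymmetric roles of $F,F'$ in the maps involving $\tilde u,\tilde v$ requires care, but once the identity $S^{FF'}(u^{FF'}_{ij}) = v^{F'F}_{ji}$ is used consistently, each case reduces to a short verification.
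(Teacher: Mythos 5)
Your proposal is correct and follows essentially the same route as the paper: check that the prescribed maps on $M_n(\C)$ (resp.\ $\C$) are comodule maps, extend them uniquely to Yetter--Drinfeld morphisms via the universal property of free Yetter--Drinfeld modules together with the canonical identification $(M\boxtimes\mathcal H(F))\mathbin{\Box}\mathcal H(F,F')\cong (M\mathbin{\Box}\mathcal H(F,F'))\boxtimes\mathcal H(F')$, and prove bijectivity by exhibiting an inverse built from the cogroupoid structure. One caveat: your ``clean'' bijectivity argument is insufficient as stated --- knowing that $(-)\mathbin{\Box}\mathcal H(F,F')$ is a dimension-preserving equivalence only tells you that source and target have the same finite dimension, and a comodule map between equal-dimensional comodules need not be bijective unless you first establish injectivity or surjectivity (which you do not); these comodules are also not simple, so Schur's lemma is no shortcut. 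Your fallback, however, is exactly the paper's argument: compose with the analogous map for the pair $(F',F)$ and use the cogroupoid isomorphisms $\mathcal H(F',F)\mathbin{\Box}\mathcal H(F,F')\cong\mathcal H(F')$ and $M\cong M\mathbin{\Box}\mathcal H(F')$ to see that the composite is the identity, whence the maps are mutually inverse; with that route made primary, the proof is complete.
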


\begin{proof}
  It is easy to check that the linear extensions to $M_n(\C)$ are comodule maps, hence they uniquely extend to morphisms between the free Yetter-Drinfeld modules.  In order to understand why they are isomorphisms, recall the following canonical isomorphisms.
  \begin{itemize}
  \item $\mathcal H(F,F'')\cong\mathcal H(F,F')\mathbin\Box \mathcal H(F',F''), u_{ij}^{FF''}\mapsto \sum u_{ik}^{FF'}\otimes u_{kj}^{F'F''}, v_{ij}^{FF''}\mapsto \sum v_{ik}^{FF'}\otimes v_{kj}^{F'F''}$ defines a Hopf algebra isomorphism.
  \item For every $\mathcal H(F)$ comodule $M$,
    \[M\cong M\mathbin\Box \mathcal H(F), m\mapsto \gamma(m) \]
    defines an isomorphism of comodules. If $M$ is a Yetter-Drinfeld module, the isomorphism is an isomorphism of Yetter Drinfeld modules.
  \end{itemize}
  Now one can check that the iteration \[M_n^{u^{F'}}\to M_n^{u^{F}}\mathbin{\Box}\mathcal H(F,F')\to M_n^{u^{F'}}\mathbin{\Box}\mathcal H(F',F)\mathbin{\Box}\mathcal H(F,F')\cong M_n^{u^{F'}}\mathbin{\Box}\mathcal H(F')\cong M_n^{u^{F'}}\]
  yields the identity on $M_n^{u^{F'}}$:
  \begin{align*}
    \sum e_{pq}\otimes S^{F'F}(u_{kp}^{F'F})u_{q \ell}^{F F'}\otimes S^{F'F}(u_{ik}^{F'F})u_{\ell j}^{F F'}\cong \sum e_{pq}\otimes S^{F'}(u_{pi}^{F'}) u_{qj}^{F'}\cong e_{ij}.
  \end{align*}
  The other cases work similarly. 
\end{proof}

\begin{lemma}\label{lem:monoidal-equivalence-transforms-Psi(F)-into-Psi(F')}
  Under identification given by the isomorphisms of the previous lemma, the maps $\Psi_i^{\mathcal H(F)}\mathbin{\Box} \mathcal H(F,F')$ coincide with $\Psi_i^{\mathcal H(F')}$. 
\end{lemma}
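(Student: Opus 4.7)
The plan is to verify the equality componentwise on each of the six ``block entries'' of the three maps $\Psi_i^{\mathcal H(F)}$. Since all domains are free Yetter--Drinfeld modules and the isomorphisms of Lemma~\ref{lem:YD-module-isomorphisms} are extensions of comodule maps on the generating subcomodules (namely $\mathbb C 1$, $M_n^{u}(\mathbb C)$, etc.), and since $\Psi_i^{\mathcal H(F)}\mathbin{\Box}\mathcal H(F,F')$ and $\Psi_i^{\mathcal H(F')}$ are both morphisms of Yetter--Drinfeld $\mathcal H(F')$-modules, it suffices to check commutativity on these generating comodules. This reduces every identity to a statement living inside $M_{k}(\mathbb C)\otimes \mathcal H(F,F')$ for small $k$, with no module-action bookkeeping.

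The computations then all have the same flavour. Take for instance the $(1,1)$ component of $\Psi_3^{\mathcal H(F')}$, evaluated at $1$: we must show that the element $-F'{}^{-t}\in M_n^{v^{F'}}$ is sent to $-F^{-t}\otimes 1\in M_n^{v^F}\mathbin{\Box}\mathcal H(F,F')$ by the isomorphism of Lemma~\ref{lem:YD-module-isomorphisms}. Writing this out using that isomorphism reduces the claim to the identity
\[
\sum_{i,j}(F'{}^{-1})_{ji}\, S^{F'F}(v^{F'F}_{ik})\, v^{F'F}_{\ell j}=F^{-1}_{\ell k}\cdot 1
\]
in $\mathcal H(F,F')$, i.e.\ $v^{F'F}F'{}^{-1}S^{F'F}(v^{F'F})=F^{-1}$, which follows directly from the defining relations of $\mathcal H(F',F)$ and the cogroupoid antipode axiom. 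The $(2,2)$ component works analogously with $u$ in place of $v$. For the off-diagonal components of $\Psi_3$, applied to $1$, a similar reduction gives identities of the form $F^{-1}v^F \otimes 1 = \gamma(F'{}^{-1}v^{F'})$ and $u^F F^t\otimes 1=\gamma(u^{F'}F'{}^t)$ in the cotensor product, which again unfold to the cogroupoid relations $v^{FF'}F' = F v^{FF'}$ and $u^{FF'}F'^{t}=F^t u^{FF'}$ (or, written differently, to the $F$- and $F'$-equivariance of the bi-Galois matrices, cf.\ the relations defining $\mathcal H(F,F')$).

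For $\Psi_2^{\mathcal H(F)}\mathbin{\Box}\mathcal H(F,F')$ one checks each of the four block entries on a basis matrix unit $e_{ij}$. The diagonal blocks $A\mapsto A^t$ and $B\mapsto B^t$ are compatible with the isomorphisms of Lemma~\ref{lem:YD-module-isomorphisms} just because transposition swaps the $(i,j)$- and $(j,i)$-indexing pattern that distinguishes $u$ from $\tilde u$ (respectively $v$ from $\tilde v$). The off-diagonal blocks $A\mapsto v^t A$ and $B\mapsto F u^t B F^{-1}$ unfold, via the same reduction, to the same kinds of identities between matrix entries of $u^{FF'}$ and $v^{FF'}$ that were used for $\Psi_3$. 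Finally, $\Psi_1^{\mathcal H(F)}\mathbin{\Box}\mathcal H(F,F')$ restricts on the generating comodules to components of the form ``take the trace of $u\cdot$'' or ``take the trace of $v\cdot$'' together with the identity on the scalar summand $\mathcal H$; after applying Lemma~\ref{lem:YD-module-isomorphisms}, these become the statements $\sum_\ell u^{FF'}_{j\ell}S^{F'F}(u^{F'F}_{\ell i})=\delta_{ij}$ and $\sum_\ell v^{FF'}_{j\ell}S^{F'F}(v^{F'F}_{\ell i})=\delta_{ij}$, which are the antipode axioms of the cogroupoid.

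The only real technical obstacle is notational: keeping consistent conventions for which index of $u^{FF'}$ is ``incoming'' versus ``outgoing'' under the isomorphisms, and making sure the antipode $S^{F'F}$ is applied in the right variance. Once this is set up cleanly, each of the ten scalar identities to check is immediate from the relations $u^{FF'}(v^{FF'})^t=I$, $(v^{FF'})^t u^{FF'}=I$, $v^{FF'}F'(u^{FF'})^t F^{-1}=I$ and $F'(u^{FF'})^t F^{-1}v^{FF'}=I$, together with the cogroupoid antipode formulas derived from them.
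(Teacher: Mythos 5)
Your proposal is correct and takes essentially the same route as the paper: the published proof is a one-line observation that the identification isomorphisms of \Cref{lem:YD-module-isomorphisms} have formally the same formulas as the coactions, so the componentwise verification on generating subcomodules is literally the computation already carried out in the proof of \Cref{thm:Psi_i-YD} with $F$ replaced by $F'$ in the appropriate places, and the scalar identities you exhibit are exactly the cogroupoid versions of the relations $uv^t=I$, $F'(u)^tF^{-1}v=I$, etc.\ used there. One small caveat: for the off-diagonal components (e.g.\ $1\mapsto F'^{-1}v^{F'}$) the image leaves the generating subcomodule, so unfolding the identification does require the module-action bookkeeping on the cotensor product (the analogue of the $\YDaction$-computations in \Cref{thm:Psi_i-YD}), contrary to your claim that none is needed --- but this is the same bookkeeping as in that earlier proof and does not affect the conclusion.
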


\begin{proof}
  Note that the formulas for the isomorphisms look exactly the same as the formulas for the coactions, once we hide in the notation the superscripts and the ranges for the indices. The calculations therefore are almost identical to those in the proof of \Cref{thm:Psi_i-YD}, only $F$ has to be replaced by $F'$ in the appropriate places. 
\end{proof}

\begin{corollary}\label{cor:normalizable_implies_exactness}
  The sequence in \Cref{thm:Psi_i-YD} is exact for all generic $F$. In particular, we obtain a free resolution of the counit for the free unitary compact quantum groups $U_K^+$ with arbitrary $K\in \operatorname{GL}_n(\C)$.
\end{corollary}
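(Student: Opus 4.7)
The plan is to extend the exactness of \Cref{cor:YD-resolution-for-generic-asymmetry} from generic asymmetries to all generic $F$ by means of the monoidal equivalence of Yetter-Drinfeld categories afforded by the Hopf bi-Galois object $\mathcal H(F,F')$, which is nonzero whenever $\tr(F)=\tr(F')$ and $\tr(F^{-1})=\tr({F'}^{-1})$. A technical wrinkle to be addressed first is that every asymmetry $F'=E^tE^{-1}$ automatically satisfies $\tr(F')=\tr((F')^{-1})$ (since $\tr(EE^{-t})=\tr(E^{-1}E^t)=\tr(E^tE^{-1})$), so for generic $F$ with $\tr(F)\neq\tr(F^{-1})$ no asymmetry has the matching pair of traces. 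I would get around this with a preliminary rescaling of $F$ and a simple isomorphism of complexes.

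Concretely, genericity forces $\tr(F),\tr(F^{-1})\neq 0$, so I would pick $\lambda\in\C^\times$ with $\lambda^2=\tr(F^{-1})/\tr(F)$ and put $F'':=\lambda F$; then $\tr(F'')=\tr((F'')^{-1})=\sqrt{\tr(F)\tr(F^{-1})}$. Letting $q$ be a root of the genericity equation $q^2-\sqrt{\tr(F)\tr(F^{-1})}\,q+1=0$, the $2\times 2$ matrix $F_q:=\operatorname{diag}(q^{-1},q)$ is a generic asymmetry (with $F_q=E_q^tE_q^{-1}$ for $E_q=\left(\begin{smallmatrix}0&1\\q^{-1}&0\end{smallmatrix}\right)$) such that $\tr(F_q)=\tr(F_q^{-1})=q+q^{-1}=\tr(F'')$. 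Hence $\mathcal H(F_q,F'')$ is nonzero and $(-)\mathbin{\Box}\mathcal H(F_q,F'')$ is an exact equivalence between the Yetter-Drinfeld categories of $\mathcal H(F_q)$ and $\mathcal H(F'')$. Applying it to the exact sequence from \Cref{cor:YD-resolution-for-generic-asymmetry} for $F_q$ and invoking \Cref{lem:YD-module-isomorphisms,lem:monoidal-equivalence-transforms-Psi(F)-into-Psi(F')} yields exactness of the sequence from \Cref{thm:Psi_i-YD} for $F''$.

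It remains to pass from $F''=\lambda F$ back to $F$. The Hopf algebras $\mathcal H(F)$ and $\mathcal H(\lambda F)$ are literally equal (the relation $v(\lambda F)u^t(\lambda F)^{-1}=I$ simplifies to $vFu^tF^{-1}=I$), and the Yetter-Drinfeld modules appearing in the two sequences coincide because their coactions only involve $u,v,S(u),S(v)$. A direct check of the formulas in \Cref{thm:Psi_i-YD} (only $\Psi_3$ genuinely depends on $F$) shows that the scalings $\phi_3(a,b)=(\lambda^2 a,b)$, $\phi_2(A,B)=(\lambda A,\lambda B)$, $\phi_1(A,B,c)=(\lambda A,\lambda B,\lambda c)$, $\phi_0(p,q)=(\lambda p,\lambda q)$ assemble into a Yetter-Drinfeld isomorphism of complexes between the sequences for $F$ and $\lambda F$, so exactness transfers. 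The main obstacle is really just this trace mismatch, resolved by the rescaling detour; the rest is routine bookkeeping. For the ``in particular'' clause, $\Pol{U_K^+}\cong\mathcal H(K^*K)$ with $K^*K$ positive definite and hence generic, and since all terms of the sequence in \Cref{thm:Psi_i-YD} are free Yetter-Drinfeld modules (hence free as $\mathcal H$-modules), we obtain the asserted finite free resolution of the counit.
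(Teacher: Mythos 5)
Your proposal is correct and takes essentially the same route as the paper: exactness for the $2\times 2$ generic asymmetry $F_q=E_q^tE_q^{-1}$, transfer via the cogroupoid monoidal equivalence of \Cref{lem:YD-module-isomorphisms,lem:monoidal-equivalence-transforms-Psi(F)-into-Psi(F')}, and a separate scalar step exploiting $\mathcal H(F)=\mathcal H(\lambda F)$. The only (cosmetic) difference is that the paper rescales on the asymmetry side, taking $F'=\alpha E_q^tE_q^{-1}$ with traces matching $F$ and comparing $\Psi_3^{\mathcal H(F')}$ with $\Psi_3^{\mathcal H(E_q^tE_q^{-1})}$, whereas you rescale $F$ itself to $F''=\lambda F$ and then undo the scaling with an explicit chain isomorphism --- a mirror image of the same argument.
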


\begin{proof}
    Let $F$ be generic, in particular, $F$ is normalizable.
    As can be read from the proof of \cite[Corollary 2.5(i)]{Bichon07}, there exists an $F'$ with $\tr F=\tr F'$ and $\tr F^{-1}= \tr{F'}^{-1}$ such that $F'=\alpha E^tE^{-1}$ is a scalar multiple of an asymmetry, $\alpha\neq0$.\footnote{Namely, for $\lambda\neq0$ such that $\tr \lambda F=\tr ((\lambda F)^{-1})$ and $q$ a (automatically non-zero) solution of $q^2-\tr(\lambda F)q+1=0$,  
    \[F':=\frac{1}{\lambda}\begin{pmatrix}
        q^{-1}&0\\0&q
    \end{pmatrix}=\frac{1}{\lambda}\begin{pmatrix}
        0&1\\q^{-1}&0
    \end{pmatrix}^t\begin{pmatrix}
        0&1\\q^{-1}&0
    \end{pmatrix}^{-1}.\]} Clearly, $F$ is generic if and only if $F'$ is generic if and only if $E^tE^{-1}$ is generic. The sequence from \Cref{thm:Psi_i-YD} applied to $E^tE^{-1}$ is exact. Note that the sequence from \Cref{thm:Psi_i-YD} applied to $F'=\alpha E^tE^{-1}$ is almost the same as the one for $E^tE^{-1}$ because $\mathcal H(F')= \mathcal H(E^tE^{-1})$ and the scalar $\alpha$ cancels out in most places, the only difference being that 
    \[\Psi_3^{\mathcal H(F')}\begin{pmatrix}
        a\\b
    \end{pmatrix}=\Psi_3^{\mathcal H(E^tE^{-1})}\begin{pmatrix}
        \alpha^{-1} &0\\0& \alpha\end{pmatrix}\begin{pmatrix}
        a\\b
    \end{pmatrix}.\] Consequently, the images agree and the kernels have the same dimension, which shows that the sequence for $F'$ is also exact. Finally, the sequence for $F'$ transforms under cotensor product with $\mathcal H(F',F)$ into the corresponding sequence for $F$ , which is therefore also exact.
\end{proof}

With this, the proofs of \Cref{main-result:resolution-for-H(F),main-result:Hochschild} are completed.

\section{Bialgebra cohomology for  \texorpdfstring{$\mathcal H(F)$}{H(F)} and \texorpdfstring{$\mathcal A(E)$}{A(E)}}
\label{sec:bialgebra-cohomology}

Let $F$ be generic, so that \eqref{eq:YD-res_H(F)} in \Cref{thm:Psi_i-YD} is a free resolution by \Cref{cor:normalizable_implies_exactness}.
As explained in  \cite[Section 2.6]{Bic16}, the bialgebra cohomology of $\mathcal H=\mathcal H(F)$ is the cohomology of the complex
\begin{multline}
  \label{eq:YD-complex-for-H}
  0\to \hom_{\mathcal H}^{\mathcal H}\left(
    \begin{pmatrix} \mathcal H \\ \mathcal H \end{pmatrix},
    \C_\e\right) 
  \xrightarrow{-\circ \Psi^{\mathcal H}_1} 
  \hom_{\mathcal H}^{\mathcal H}\left(
    \begin{pmatrix} M_n^{\tilde v}(\mathcal H) \\ M_n^{\tilde u}(\mathcal H) \\ \mathcal H \end{pmatrix},
    \C_\e\right)
  \\
  \xrightarrow{-\circ \Psi^{\mathcal H}_2}
  \hom_{\mathcal H}^{\mathcal H}\left(
    \begin{pmatrix} M_n^v(\mathcal H) \\ M_n^u(\mathcal H) \end{pmatrix} ,
    \C_\e\right)
  \xrightarrow{-\circ\Psi^{\mathcal H}_3}
  \hom_{\mathcal H}^{\mathcal H}\left(
    \begin{pmatrix} \mathcal H \\ \mathcal H \end{pmatrix},
    \C_\e\right)\to0.
\end{multline}

\begin{observation}\label{obs:hom_H^H}
  By the universal property of free Yetter-Drinfeld modules,
  \[\hom_{\mathcal H}^{\mathcal H}(W\boxtimes \mathcal H,\mathbb C_\varepsilon)\cong \hom^{\mathcal H}(W,\mathbb C)\]
  for every $\mathcal H$-comodule $W$. Obviously, $\hom^{\mathcal H}(\mathbb C,\mathbb C)\cong \mathbb C$, where the inverse isomorphism maps $\lambda\in\mathbb C$ to the map $(z\mapsto \lambda z)\in\hom^{\mathcal H}(\mathbb C,\mathbb C)$. Now let $f\colon  M_n^u(\C)\to \mathbb C$ be a linear map, $f(u_{jk})=f_{jk}$. Then 
  \begin{align*}
    f\in \hom^{\mathcal H}( M_n^u(\mathcal H), \mathbb C)
    \iff \forall_{j,k}\ f\otimes\mathrm{id}(\gamma^{u}(e_{jk}))=\gamma(f(e_{jk}))
  \end{align*}
  and
  \begin{align*} f\otimes\mathrm{id}(\gamma(e_{jk}))=\gamma(f(e_{jk}))&\iff \sum f_{qp}S(u_{jq})u_{pk}=f_{jk}1\\
                                                                              &\iff\sum f_{jp} u_{pk} = \sum f_{qk} u_{jq}\\
                                                                              &\iff f_{jk}=\delta_{jk} f_{jj}  
  \end{align*}
  show that
  \[\hom_{\mathcal H}^{\mathcal H}( M_n^u(\mathcal H),\mathbb C_\varepsilon)\cong \hom^{\mathcal H}( M_n^u(\C),\mathbb C)\cong \mathbb C\]
  where the inverse isomorphism maps $\lambda\in\mathbb C$ to the linear map $f$ with $f_{jk}=\delta_{jk}\lambda$. The same holds of course for all $\hom_{\mathcal H}^{\mathcal H}(M_n^{\#}(\mathcal H),\mathbb C_\varepsilon)$ with $\#\in \{u,v,\tilde u,\tilde v\}$. When we follow the calculations in the proof of \Cref{thm:Hochschild-U+}, we  have to replace matrices by constant diagonal matrices! Therefore, the bialgebra cohomology of $\mathcal H$ is the cohomology of the complex (using the notation $(\Psi^{\mathcal H}_j)^{*}$ in the appropriately adapted meaning) 
  \[0\to\mathbb C^2\xrightarrow{(\Psi^{\mathcal H}_1)^{*}}\mathbb C^3\xrightarrow{(\Psi^{\mathcal H}_2)^{*}} \mathbb C^2\xrightarrow{(\Psi^{\mathcal H}_3)^{*}}\mathbb C^2\to 0\]
  with maps
  \begin{align*}
    (\Psi^{\mathcal H}_1)^{*}
    \begin{pmatrix}
      \lambda\\\mu
    \end{pmatrix}
    =
    \begin{pmatrix}
      \mu-\lambda\\\lambda-\mu\\\lambda-\mu
    \end{pmatrix}
    ,
    \quad
    (\Psi^{\mathcal H}_2)^{*}
    \begin{pmatrix}
      \lambda\\\mu\\\nu
    \end{pmatrix}
    =
    \begin{pmatrix}
      \lambda+\mu\\\lambda+\mu
    \end{pmatrix}
    ,
    \quad
    (\Psi^{\mathcal H}_3)^{*}
    \begin{pmatrix}
      \lambda\\\mu
    \end{pmatrix}
    =\operatorname{tr}(F)
    \begin{pmatrix}
      \mu-\lambda\\\lambda-\mu
    \end{pmatrix}.
  \end{align*}
  It is straightforward to determine image and kernel.
\end{observation}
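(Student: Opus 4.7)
The plan is to complete the reduction initiated in the observation, producing an explicit four-term complex of small finite-dimensional vector spaces and then reading off its cohomology by routine linear algebra. The three steps are (i) justify the identification of each Yetter-Drinfeld hom-space with a power of $\C$; (ii) derive the induced differentials by specializing the formulas for $(\Psi_j^{\mathcal H})^*$ from the second proof of \Cref{thm:Hochschild-U+} to $\tau = \varepsilon$ (so $S = T = I_n$) and to constant diagonal matrices; and (iii) compute kernels and images.

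For (i), the universal property of free Yetter-Drinfeld modules immediately gives $\hom_{\mathcal H}^{\mathcal H}(W \boxtimes \mathcal H, \C_\varepsilon) \cong \hom^{\mathcal H}(W, \C)$ for any right $\mathcal H$-comodule $W$, and for $W = \C$ this is trivially $\C$. For $W = M_n^{\#}(\C)$ with $\# \in \{u,v,\tilde u, \tilde v\}$, the colinearity condition reduces a candidate $f(e_{ij}) = f_{ij}$ to the diagonal constant form $f_{ij} = \delta_{ij}\lambda$, as sketched for $\# = u$ in the observation. A direct route completes this by manipulating the colinearity identity $\sum_{p,q} f_{qp} S(u_{jq}) u_{pk} = f_{jk}$ using $uv^t = v^t u = I$ (and the analogous relations for $v$) to collapse index pairs. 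A cleaner conceptual route invokes cosemisimplicity of $\mathcal H(F)$ under the genericity hypothesis (\Cref{rem:generic-cosemisimple}) together with irreducibility of the fundamental corepresentation, so that by Schur's lemma the multiplicity of the trivial corepresentation in $V^*\otimes V \cong M_n^u$ (and the three variants) equals one, spanned by the trace.

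For (ii), substituting $\Lambda = \lambda I_n$ and $\Mu = \mu I_n$ into the formulas for $(\Psi_j^{\mathcal H})^*$ obtained in the second proof of \Cref{thm:Hochschild-U+} and specializing to $S = T = I_n$ directly yields the reduced differentials recorded in the observation. The scalar factors arising in the third differential involve $\operatorname{tr}(F)$ and $\operatorname{tr}(F^{-1})$, both nonzero under the genericity hypothesis: $\operatorname{tr}(F) = 0 = \operatorname{tr}(F^{-1})$ would make $q = \pm i$ solutions of $q^2 + 1 = 0$, violating the exclusion of roots of unity of order $\geq 3$.

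For (iii), elementary linear algebra yields $\ker(\Psi_1^{\mathcal H})^* = \C\cdot(1,1)$ and $\im(\Psi_1^{\mathcal H})^* = \C\cdot(-1,1,1)$; then $\ker(\Psi_2^{\mathcal H})^* = \{(\lambda,-\lambda,\nu) : \lambda,\nu \in \C\}$ (dimension $2$) while $\im(\Psi_2^{\mathcal H})^* = \C\cdot(1,1)$; and $\ker(\Psi_3^{\mathcal H})^* = \C\cdot(1,1)$ with $\im(\Psi_3^{\mathcal H})^* = \C\cdot(1,-1)$. Taking successive quotients gives the claimed dimensions $1,1,0,1$. The main obstacle is step (i), specifically the one-dimensionality of $\hom^{\mathcal H}(M_n^{\#}(\C),\C)$; once this is settled, the remainder of the proof is purely mechanical.
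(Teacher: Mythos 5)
Your proposal is correct and follows essentially the same route as the paper: identify the Yetter--Drinfeld hom-spaces via the universal property and the colinearity computation (forcing constant diagonal matrices), then specialize the maps $(\Psi_j^{\mathcal H})^{*}$ from the proof of \Cref{thm:Hochschild-U+} at $\tau=\varepsilon$ (so $S=T=I_n$) to scalar matrices and finish with elementary linear algebra; your Schur-lemma argument for $\dim\hom^{\mathcal H}(M_n^{\#}(\C),\C)=1$ is just a cleaner variant of the same step (and quietly supplies the linear independence of the $u_{ij}$ that the direct index computation uses). Your form of the third differential, with factor $\tr(F^{-1})$ in the first component and $\tr(F)$ in the second, is in fact the accurate specialization (the common factor $\tr(F)$ displayed in the paper is a harmless slip), and your remark that genericity forces both traces to be nonzero is exactly what makes the rank count, and hence the dimensions $1,1,0,1$, go through.
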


\begin{theorem}[\Cref{main-result:bialgebra-cohomology}] Let $F$ be generic (in particular, $\tr(F)\neq 0$).
 Then the bialgebra cohomology of $\mathcal H=\mathcal H(F)$ is given as follows:
\begin{align*}
    \dim \mathrm{H}_{\mathrm b}^0(\mathcal H) &= 1 ,
    &\dim \mathrm{H}_{\mathrm b}^1(\mathcal H) &= 1 ,
    &\dim \mathrm H_{\mathrm b}^2(\mathcal H) &= 0,& 
    \dim \mathrm H_{\mathrm b}^3(\mathcal H) &= 1, 
\end{align*}
and $\dim \mathrm H_{\mathrm b}^k(\mathcal H) = 0$ for all $ k \geq 4$.
\end{theorem}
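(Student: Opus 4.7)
The plan is to read off the cohomology directly from the finite complex of low-dimensional vector spaces produced in \Cref{obs:hom_H^H}, which already encodes everything we need: since $F$ is assumed generic, \Cref{cor:normalizable_implies_exactness} guarantees that \eqref{eq:YD-res_H(F)} is a free Yetter-Drinfeld resolution of $\mathbb C_\varepsilon$, and by the definition of Gerstenhaber-Schack cohomology for cosemisimple Hopf algebras (cf.\ \cite[Proposition~5.2]{Bic16}), $\mathrm H_{\mathrm b}^{\ast}(\mathcal H)$ is the cohomology of the complex \eqref{eq:YD-complex-for-H}. So the essential work is just a linear-algebra computation on the four-term complex
\[
0\to\mathbb C^2\xrightarrow{(\Psi^{\mathcal H}_1)^{*}}\mathbb C^3\xrightarrow{(\Psi^{\mathcal H}_2)^{*}}\mathbb C^2\xrightarrow{(\Psi^{\mathcal H}_3)^{*}}\mathbb C^2\to 0
\]
with the explicit maps displayed in \Cref{obs:hom_H^H}.

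I would then simply compute kernels and images term by term. For $(\Psi^{\mathcal H}_1)^{*}(\lambda,\mu)=(\mu-\lambda,\lambda-\mu,\lambda-\mu)$, the kernel is the diagonal $\{\lambda=\mu\}$, hence $\dim\ker(\Psi^{\mathcal H}_1)^{*}=1$ and $\dim\operatorname{im}(\Psi^{\mathcal H}_1)^{*}=1$. For $(\Psi^{\mathcal H}_2)^{*}(\lambda,\mu,\nu)=(\lambda+\mu,\lambda+\mu)$, the kernel $\{\lambda+\mu=0\}$ has dimension $2$ and the image has dimension $1$. For $(\Psi^{\mathcal H}_3)^{*}(\lambda,\mu)=\operatorname{tr}(F)(\mu-\lambda,\lambda-\mu)$, the crucial point is that $\operatorname{tr}(F)\neq 0$ because $F$ is normalizable (part of being generic); therefore the map is non-zero, with kernel the diagonal of dimension $1$ and image of dimension $1$.

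Applying the standard formula $\dim\mathrm H_{\mathrm b}^k(\mathcal H)=\dim\ker(\Psi^{\mathcal H}_{k+1})^{*}-\dim\operatorname{im}(\Psi^{\mathcal H}_k)^{*}$ (with the convention that $(\Psi^{\mathcal H}_j)^{*}=0$ for $j\notin\{1,2,3\}$) immediately yields
\[
\dim\mathrm H_{\mathrm b}^0=1,\quad \dim\mathrm H_{\mathrm b}^1=2-1=1,\quad \dim\mathrm H_{\mathrm b}^2=1-1=0,\quad \dim\mathrm H_{\mathrm b}^3=2-1=1,
\]
while vanishing in degrees $\geq 4$ is automatic from the length of the resolution.

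There is no real obstacle to speak of: all the conceptual content is in \Cref{thm:Psi_i-YD} (identifying the correct Yetter-Drinfeld structures on $M_n^\#(\mathcal H)$) and in \Cref{obs:hom_H^H} (the reduction of the Yetter-Drinfeld hom-spaces to $\mathbb C$ via the coinvariance condition forcing matrices to be scalar). The only place where genericity is actually used in the final computation, as opposed to being needed to ensure the resolution is exact, is to guarantee $\operatorname{tr}(F)\neq 0$ so that $(\Psi^{\mathcal H}_3)^{*}$ has rank one rather than zero; had $\operatorname{tr}(F)$ vanished, $\dim\mathrm H_{\mathrm b}^2$ and $\dim\mathrm H_{\mathrm b}^3$ would both jump by one.
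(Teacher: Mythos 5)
Your proposal is correct and follows essentially the same route as the paper: the paper likewise reduces to the four-term complex of \Cref{obs:hom_H^H} and simply reads off kernels and images, and your linear-algebra computations (kernel dimensions $1,2,1$ and image dimensions $1,1,1$) match. One small imprecision in the justification: $\tr(F)\neq 0$ does not follow from normalizability alone, which permits $\tr(F)=0=\tr(F^{-1})$; it follows from the full genericity condition, since $\tr(F)=0=\tr(F^{-1})$ would force $q=\pm i$ in \eqref{eq:generic}, a root of unity of order $4$.
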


\begin{observation}
  Analogously to Observation~\ref{obs:hom_H^H} we get
  \[f\in\hom^{\mathcal A}(V^*V,\mathbb C)\iff f_{jk}=\delta_{jk} f_{jj}\]
  for a linear map $f\colon V^*V\to\mathbb C, f(e_j^*e_k)=f_{jk}$. Of course,
  \[\hom_{\mathcal A}^{\mathcal A}((1-g)\otimes_{\mathbb C\mathbb Z_2}\mathcal A,\mathbb C_\varepsilon)\subset \hom_{\mathcal A} ((1-g)\otimes_{\mathbb C\mathbb Z_2}\mathcal A,\mathbb C_\varepsilon)=0.\]
\end{observation}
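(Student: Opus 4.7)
The observation has two pieces, and my plan is to handle them in order, treating each as a direct analogue of material already in the paper.

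For the first claim, the strategy is to mirror verbatim the calculation in the preceding Observation \ref{obs:hom_H^H}, with the fundamental corepresentation $u$ of $\mathcal H(F)$ replaced by the fundamental corepresentation $x$ of the subalgebra $\mathcal B(E)\subset\mathcal A$. The universal property of the free Yetter-Drinfeld module gives
\[\hom_{\mathcal A}^{\mathcal A}(V^*V\boxtimes\mathcal A,\C_\e)\cong\hom^{\mathcal A}(V^*V,\C),\]
so I only need to characterize $\mathcal A$-colinearity of a linear functional $f$ on $V^*V$ with $f(e_j^*e_k)=f_{jk}$. The coaction on $V^*V$ as an $\mathcal A$-comodule is still $\gamma(e_j^*e_k)=\sum_{p,q} e_q^* e_p\otimes S(x_{jq})x_{pk}$, so the colinearity condition reads $\sum_{p,q} f_{qp} S(x_{jq})x_{pk}=f_{jk}\cdot 1$. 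Multiplying by $x_{rj}$ on the left and summing over $j$, then using the antipode identity $\sum_j x_{rj}S(x_{jq})=\delta_{rq}$ (valid since $x$ is a corepresentation of $\mathcal A$), collapses this to $\sum_p f_{rp} x_{pk}=\sum_j x_{rj} f_{jk}$, i.e.\ $xF'=F'x$ in $M_n(\mathcal A)$ with $F':=(f_{jk})$.

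The final step is to match coefficients using linear independence of the $x_{ij}$ in $\mathcal A$: this holds because the defining relations of $\mathcal B(E)$ are of degree two, so the $x_{ij}$ span an $n^2$-dimensional subspace of $\mathcal B(E)$, and the canonical embedding $\mathcal B(E)\hookrightarrow\mathcal B(E)\free\C\Z_2=\mathcal A$ is injective. Comparing the row $(x_{i1},\dots,x_{in})$ on the left of $(xF'-F'x)_{ik}=0$ with the column $(x_{1k},\dots,x_{nk})$ on the right, whose only common entry is $x_{ik}$, forces $f_{jk}=0$ for $j\neq k$ and $f_{jj}=f_{kk}$ for all $j,k$, so $F'$ is a scalar matrix. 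Conversely any such $f=\lambda I$ is colinear because $\sum_p\lambda S(x_{jp})x_{pk}=\lambda\delta_{jk}$ by the antipode axiom, confirming that the hom-space is one-dimensional.

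For the second claim it is enough to show that the larger space $\hom_{\mathcal A}((1-g)\otimes_{\C\Z_2}\mathcal A,\C_\e)$ already vanishes. Any such module map $\phi$ is determined by the scalar $c:=\phi((1-g)\otimes_{\C\Z_2}1)$. Exploiting the tensor balance and the identity $(1-g)g=g-1=-(1-g)$ in $\C\Z_2$, I compute
\[-c=\phi\bigl((1-g)g\otimes_{\C\Z_2}1\bigr)=\phi\bigl((1-g)\otimes_{\C\Z_2} g\bigr)=c\cdot\e(g)=c,\]
which forces $c=0$ and hence $\phi=0$.

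Overall the observation is routine; the only delicate point is the linear-independence step in (1), and this is immediate from the structure of the defining relations of $\mathcal B(E)$ combined with the canonical embedding into the free product, so I do not anticipate any real obstacle.
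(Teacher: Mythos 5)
Your argument is correct and follows exactly the route the paper intends: the paper offers no proof beyond ``analogously to Observation~\ref{obs:hom_H^H}'' and ``of course'', and your two computations are precisely the intended ones --- the $\mathcal A$-colinearity condition on $V^*V$ reduces, after multiplying by $x$ and using the antipode axiom, to $xF'=F'x$ and hence to $F'$ being scalar, while the balanced-tensor identity $(1-g)g=-(1-g)$ forces any module map $(1-g)\otimes_{\C\Z_2}\mathcal A\to\C_\e$ to vanish (consistent with the earlier computation of $\hom_{\C\Z_2}(\C(1-g),\C_\tau)$ in \Cref{prop:HH-of-CZ2}). The one point to tighten is your justification of the linear independence of the $x_{ij}$ in $\mathcal A$: the defining relations $E^{-1}x^tEx-I=xE^{-1}x^tE-I=0$ are \emph{not} homogeneous of degree two --- they have a degree-zero part --- so the two-sided ideal they generate does contain elements with nonzero degree-one components, and the ``span an $n^2$-dimensional subspace'' argument does not go through as stated. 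The independence itself is a standard fact, and in the setting where this observation is applied ($F=E^tE^{-1}$ generic) it follows cleanly from cosemisimplicity of $\mathcal B(E)$ and irreducibility of the fundamental comodule $V$; note that the paper silently relies on the analogous independence of the $u_{ij}$ in Observation~\ref{obs:hom_H^H} as well.
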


  The bialgebra cohomology of $\mathcal A=\mathcal A(E)$ can now again be deduced in the same manner. \mbox{(Co-)}Restricting the $(\Phi_i^{(\mathcal B)})^*$ to constant diagonal matrices results in the complex \[0\to\mathbb C\xrightarrow{0}\mathbb C\xrightarrow{2} \mathbb C\xrightarrow{0}\mathbb C\to 0.\] Therefore we obtain:

\begin{theorem}
    Let $F=E^tE^{-1}$ be a generic asymmetry. 
    Then the bialgebra cohomology of $\mathcal A=\mathcal A(E)$ is given as follows:
    \begin{align*}
    \dim \mathrm{H}_{\mathrm b}^0(\mathcal A) &= 1 ,
    &\dim \mathrm{H}_{\mathrm b}^1(\mathcal A) &= 0 ,
    &\dim \mathrm H_{\mathrm b}^2(\mathcal A) &= 0 ,&
    \dim \mathrm H_{\mathrm b}^3(\mathcal A) &= 1,
\end{align*}
and $\dim \mathrm H_{\mathrm b}^k(\mathcal A) = 0$ for all $ k \geq 4$.
\end{theorem}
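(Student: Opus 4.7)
The plan is to imitate the Yetter--Drinfeld cohomology argument just carried out for $\mathcal H(F)$: apply the functor $\hom_{\mathcal A}^{\mathcal A}(-,\mathbb C_\varepsilon)$ to the free Yetter--Drinfeld resolution \eqref{eq:YD-res-B^hash+} of $\mathbb C_\varepsilon$, identify each of the resulting hom spaces via the universal property of free Yetter--Drinfeld modules, and read off the bialgebra cohomology from the resulting small complex of vector spaces. Because $F = E^tE^{-1}$ is generic, $\mathcal B(E)$ and hence the free product $\mathcal A(E)$ are cosemisimple, so \eqref{eq:YD-res-B^hash+} is a resolution by projective objects in $\mathcal{YD}^{\mathcal A}_{\mathcal A}$ and its cohomology after application of $\hom_{\mathcal A}^{\mathcal A}(-,\mathbb C_\varepsilon)$ indeed computes $\mathrm H_{\mathrm b}^*(\mathcal A)$.

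The first substantive step is to identify the hom spaces. Arguing as in \Cref{obs:hom_H^H}, the universal property yields $\hom_{\mathcal A}^{\mathcal A}(W\boxtimes \mathcal A,\mathbb C_\varepsilon)\cong \hom^{\mathcal A}(W,\mathbb C)$, and in both $\hom^{\mathcal A}(\mathbb C,\mathbb C)$ and $\hom^{\mathcal A}(V^*V,\mathbb C)$ the colinearity constraint leaves only a single scalar parameter (a constant diagonal matrix $\lambda I_n$ in the second case). The only new input is the summand $\mathbb C(1-g)\otimes_{\mathbb C\mathbb Z_2}\mathcal A$: any $\mathcal A$-linear map $f$ to $\mathbb C_\varepsilon$ satisfies $f((1-g)\otimes a)=\lambda\varepsilon((1-g)a)=0$, because $\varepsilon(g)=1$, so this hom space is already zero even without invoking the Yetter--Drinfeld condition. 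The complex therefore collapses to
\[
0\to\mathbb C\xrightarrow{d^0}\mathbb C\xrightarrow{d^1}\mathbb C\xrightarrow{d^2}\mathbb C\to 0.
\]

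The second step is to compute $d^0, d^1, d^2$ explicitly. I would simply restrict the dualised maps $(\Phi_i^{\mathcal A})^*$ obtained in \Cref{sec:resolution-A(E)} to constant diagonal matrices $\Lambda=\lambda I_n$, with $T=\varepsilon(x)=I_n$ and the $(1-g)$-slot suppressed. Direct substitution gives $d^0=0$ (since $T-I=0$), $d^1(\lambda)=2\lambda$ (since $\lambda I_n + E^{-t}(\lambda I_n)^t E^t = 2\lambda I_n$), and $d^2(\lambda)=\lambda\operatorname{tr}(EE^{-t}-E^tE^{-1})=0$, using $\operatorname{tr}(EE^{-t})=\operatorname{tr}((E^{-t}E)^t)=\operatorname{tr}(E^tE^{-1})$. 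The cohomology of $0\to\mathbb C\xrightarrow{0}\mathbb C\xrightarrow{2}\mathbb C\xrightarrow{0}\mathbb C\to 0$ is then $(1,0,0,1)$ in degrees $(0,1,2,3)$ and $0$ above, which is exactly the claim.

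Conceptually no step is delicate; the only thing one really has to check is that the $\mathbb C\mathbb Z_2$-factor of the free product contributes nothing to the bialgebra complex (so that the factor $2$ in $d^1$ is the same factor $2$ that would already appear when restricting the $\mathcal B(E)$-complex to constant diagonal matrices). This is the only genuinely new verification compared with the $\mathcal H(F)$ calculation, and it follows immediately from $\varepsilon(1-g)=0$.
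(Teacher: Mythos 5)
Your proposal is correct and follows essentially the same route as the paper: apply $\hom_{\mathcal A}^{\mathcal A}(-,\C_\e)$ to the projective Yetter--Drinfeld resolution \eqref{eq:YD-res-B^hash+}, note that the $\C(1-g)\otimes_{\C\Z_2}\mathcal A$ summand contributes nothing (already at the level of module maps, since $\e(1-g)=0$), identify the remaining hom spaces with constant diagonal matrices, and read off the complex $0\to\C\xrightarrow{0}\C\xrightarrow{2}\C\xrightarrow{0}\C\to0$. Your explicit evaluations of the restricted maps $(\Phi_i^{\mathcal A})^*$ agree with the paper's, so nothing further is needed.
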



\section*{Acknowledgements}

We are grateful to Alexander Mang and Moritz Weber for fruitful discussions at an initial stage of this project, and for bringing the glued products to our intention. We are also indebted to Julien Bichon for many useful hints and explanations and for sharing a preliminary draft of \cite{Bichon23pre} with us.


\bibliographystyle{myalphaurl}
\bibliography{biblio}
\setlength{\parindent}{0pt}
\end{document}